\let\origsection=\section \def\section{\@ifstar{\origsection*}{\mysection}}
\def\mysection{\@startsection{section}{1}\z@{.7\linespacing\@plus\linespacing}{.5\linespacing}{\normalfont\scshape\centering\S}}
\renewcommand{\PrintDOI}[1]{\doi{#1}}
\numberwithin{equation}{section}
\numberwithin{figure}{section}
\def\alabel{\upshape({\itshape \alph*\,})}
\def\nlabel{\upshape({\itshape \arabic*\,})}
\let\polishlcross=\l
\def\l{\ifmmode\ell\else\polishlcross\fi}
\let\emptyset=\varnothing
\let\setminus=\smallsetminus
\def\moverlay{\mathpalette\mov@rlay}
\def\mov@rlay#1#2{\leavevmode\vtop{   \baselineskip\z@skip \lineskiplimit-\maxdimen
		\ialign{\hfil$\m@th#1##$\hfil\cr#2\crcr}}}
\newcommand{\charfusion}[3][\mathord]{
	#1{\ifx#1\mathop\vphantom{#2}\fi
		\mathpalette\mov@rlay{#2\cr#3}
	}
	\ifx#1\mathop\expandafter\displaylimits\fi}
\newcommand{\dcup}{\charfusion[\mathbin]{\cup}{\cdot}}
\newcommand{\bigdcup}{\charfusion[\mathop]{\bigcup}{\cdot}}
\DeclareFontFamily{U}  {MnSymbolC}{}
\DeclareSymbolFont{MnSyC}         {U}  {MnSymbolC}{m}{n}
\DeclareFontShape{U}{MnSymbolC}{m}{n}{
	<-6>  MnSymbolC5
	<6-7>  MnSymbolC6
	<7-8>  MnSymbolC7
	<8-9>  MnSymbolC8
	<9-10> MnSymbolC9
	<10-12> MnSymbolC10
	<12->   MnSymbolC12}{}
\DeclareMathSymbol{\powerset}{\mathord}{MnSyC}{180}
\DeclareMathSymbol{\YY}{\mathord}{MnSyC}{42}
\definecolor{uuuuuu}{rgb}{0.27,0.27,0.27}
\definecolor{sqsqsq}{rgb}{0.1255,0.1255,0.1255}
\let\epsilon=\varepsilon
\let\eps=\epsilon
\let\phi=\varphi
\let\rho=\varrho
\let\theta=\vartheta
\def\NN{{\mathds N}}
\def\RR{{\mathds R}}
\def\ex{{\mathrm{ex}}}
\newcommand{\cF}{\mathcal{F}}
\newcommand{\cG}{\mathcal{G}}
\newcommand{\cH}{\mathcal{H}}
\newcommand{\cK}{\mathcal{K}}
\newcommand{\cM}{\mathcal{M}}
\newcommand{\cS}{\mathcal{S}}
\newcommand{\cT}{\mathcal{T}}
\newcommand{\gK}{\mathfrak{K}}
\newcommand{\gS}{\mathfrak{S}}
\newcommand{\gT}{\mathfrak{T}}
\newcommand{\gH}{\mathfrak{H}}
\newcommand{\gh}{\mathfrak{h}}
\newtheoremstyle{note}  {4pt}  {4pt}  {\sl}  {}  {\bfseries}  {.}  {.5em}          {}
\newtheoremstyle{introthms}  {3pt}  {3pt}  {\itshape}  {}  {\bfseries}  {.}  {.5em}          {\thmnote{#3}}
\newtheoremstyle{remark}  {2pt}  {2pt}  {\rm}  {}  {\bfseries}  {.}  {.3em}          {}
\theoremstyle{plain}
\newtheorem{theorem}{Theorem}[section]
\newtheorem{lemma}[theorem]{Lemma}
\newtheorem{corollary}[theorem]{Corollary}
\newtheorem{cor}[theorem]{Corollary}
\newtheorem{fact}[theorem]{Fact}
\newtheorem{claim}[theorem]{Claim}
\theoremstyle{note}
\newtheorem{dfn}[theorem]{Definition}
\newtheorem{definition}[theorem]{Definition}
\theoremstyle{remark}
\newcommand*\patchAmsMathEnvironmentForLineno[1]{
	\expandafter\let\csname old#1\expandafter\endcsname\csname #1\endcsname
	\expandafter\let\csname oldend#1\expandafter\endcsname\csname end#1\endcsname
	\renewenvironment{#1}
	{\linenomath\csname old#1\endcsname}
	{\csname oldend#1\endcsname\endlinenomath}}
\newcommand*\patchBothAmsMathEnvironmentsForLineno[1]{
	\patchAmsMathEnvironmentForLineno{#1}
	\patchAmsMathEnvironmentForLineno{#1*}}
\newcommand{\overrighharpoonup}[1]{\ThisStyle{%
		\vbox {\m@th\ialign{##\crcr
				\rightharpoonupfill \crcr
				\noalign{\kern-\p@\nointerlineskip}
				$\hfil\SavedStyle#1\hfil$\crcr}}}}
\def\rightharpoonupfill{%
	$\SavedStyle\m@th\mkern+0.8mu\cleaders\hbox{$\shortbar\mkern-4mu$}\hfill\rightharpoonuptip\mkern+0.8mu$}
\def\rightharpoonuptip{%
	\raisebox{\z@}[2pt][1pt]{\scalebox{0.55}{$\SavedStyle\rightharpoonup$}}}
\def\shortbar{%
	\smash{\scalebox{0.55}{$\SavedStyle\relbar$}}}
\let\lra=\longrightarrow
\newsavebox\myboxA
\newsavebox\myboxB
\newlength\mylenA
\newcommand*\xoverline[2][0.75]{%
	\sbox{\myboxA}{$\m@th#2$}%
	\setbox\myboxB\null
	\ht\myboxB=\ht\myboxA%
	\dp\myboxB=\dp\myboxA%
	\wd\myboxB=#1\wd\myboxA
	\sbox\myboxB{$\m@th\overline{\copy\myboxB}$}
	\setlength\mylenA{\the\wd\myboxA}
	\addtolength\mylenA{-\the\wd\myboxB}%
	\ifdim\wd\myboxB<\wd\myboxA%
	\rlap{\hskip 0.5\mylenA\usebox\myboxB}{\usebox\myboxA}%
	\else
	\hskip -0.5\mylenA\rlap{\usebox\myboxA}{\hskip 0.5\mylenA\usebox\myboxB}%
	\fi}
\DeclareSymbolFont{symbolsC}{U}{txsyc}{m}{n}
\DeclareMathSymbol{\strictif}{\mathrel}{symbolsC}{74}
\DeclareSymbolFont{stmry}{U}{stmry}{m}{n}
\DeclareMathSymbol\arrownot\mathrel{stmry}{"58}
\DeclareMathSymbol\Arrownot\mathrel{stmry}{"59}
\let\sm=\smallsetminus
\begin{document}
\title[A unified approach to hypergraph stability]{A unified approach to hypergraph stability}

\author{Xizhi Liu}
\address{Department of Mathematics, Statistics, and Computer Science, University of Illinois,
Chicago, IL 60607 USA}
\email{xliu246@uic.edu}
\email{mubayi@uic.edu}
\thanks{The first and second author's research is partially supported by NSF awards 
DMS-1763317 and DMS-1952767.}

\author{Dhruv Mubayi}
%

\author{Christian Reiher}
\address{Fachbereich Mathematik, Universit\"at Hamburg, Hamburg, Germany}
\email{Christian.Reiher@uni-hamburg.de}

\subjclass[2010]{}
\keywords{hypergraph Tur\'an problems, stability}

\begin{abstract}
We present a method which provides a unified framework for  most stability theorems that 
have been proved in graph and hypergraph theory.
Our main result reduces stability for a large class of hypergraph problems to the simpler 
question of checking that a hypergraph $\mathcal H$ with large minimum degree that omits the 
forbidden structures is vertex-extendable. This means that if $v$ is a vertex of $\mathcal H$ and 
${\mathcal H} -v$ is a subgraph of the extremal configuration(s), then $\mathcal H$ is also a 
subgraph of the extremal configuration(s). In many cases vertex-extendability is quite easy to 
verify.

We illustrate our approach by giving new short proofs of
hypergraph stability results of Pikhurko, Hefetz-Keevash, Brandt-Irwin-Jiang,
Bene Watts-Norin-Yepremyan and others.
Since our method always yields minimum degree stability, which is the strongest form of 
stability, in some of these cases our stability results are stronger than what was
known earlier. 
Along the way, we clarify the different notions of stability that have been previously studied.

\end{abstract}

\maketitle

\section{Introduction}\label{SEC:introduction}
\subsection{Types of stability}\label{SUBSEC:type-stability}
For $r \ge 2$ and a family $\cF$ of $r$-uniform hypergraphs (henceforth called 
$r$-graphs), an $r$-graph $\cH$ is said to be \emph{$\cF$-free} if it contains 
no member of $\cF$ as a subgraph.
For $n\in\NN$ the {\em Tur\'{a}n number} ${\rm ex}(n,\cF)$ of $\cF$ is
the maximum number of edges in an $\cF$-free {$r$-graph} on $n$ vertices.
The {\em Tur\'{a}n density} $\pi(\cF)$
of $\cF$ is defined as $\pi(\cF) = \lim_{n \to \infty} {\rm ex}(n,\cF) / \binom{n}{r}$,
and $\cF$ is called {\em nondegenerate} if $\pi(\cF)>0$.
The study of ${\rm ex}(n,\cF)$ is perhaps the central topic in extremal graph and 
hypergraph theory.

Much is known about ${\rm ex}(n,\cF)$ when $r=2$ and one of the most famous results in this 
regard is Tur\'{a}n's theorem~\cite{TU41},
which states that for $n\ge \ell \ge 2$ there is a unique $K_{\ell+1}$-free graph 
with $n$ vertices and ${\rm ex}(n,K_{\ell+1})$ edges, namely the balanced complete $\ell$-partite graph~$T(n,\ell)$.

Tur\'{a}n's theorem was extended further by Erd\H{o}s and Stone~\cite{ES46} in the following way.
Given a family of graphs $\cF$ the {\em chromatic number} $\chi(\cF)$ of $\cF$ is defined as
\begin{align}
\chi(\cF) = \min\{\chi(F)\colon F\in \cF\}, \notag
\end{align}
where $\chi(F)$ is the chromatic number of the graph $F$.
The result of Erd\H{o}s and Stone implies $\pi(\cF) = 1-\frac{1}{\chi(\cF)-1}$ for every 
family $\cF$ of graphs; the connection to the chromatic number was first stated 
explicitly by Erd\H{o}s and Simonovits~\cite{ES66}.

Extending Tur\'{a}n's theorem to hypergraphs (i.e. $r\ge 3$) is a major problem.
For $\ell > r \ge 3$, let $K_{\ell}^{r}$ be the complete $r$-graph on $\ell$ vertices.
The problem of determining $\pi(K_{\ell}^{r})$ was raised by Tur\'{a}n~\cite{TU41} and is 
still wide open.
Erd\H{o}s offered $\$ 500$ for the determination of any $\pi(K_{\ell}^{r})$ with $\ell > r \ge 3$
and $\$ 1000$ for the determination of all $\pi(K_{\ell}^{r})$ with $\ell > r \ge 3$.

Many families $\cF$ have the property that there is a unique $\cF$-free hypergraph
$\cG$ on $n$ vertices achieving ${\rm ex}(n,\cF)$,
and moreover, every $\cF$-free hypergraph $\cH$ of size close to
${\rm ex}(n,\cF)$ can be transformed to $\cG$
by deleting and adding very few edges.
Such a property is called {\em stability} of $\cF$.
The first stability theorem was proved independently by Erd\H{o}s and Simonovits~\cite{SI68}.

\begin{theorem}[Erd\H{o}s-Simonovits]\label{THM:Erdos-Simonovits-stability}
Let $\ell \ge 2$ and let $\cF$ be a family of graphs with $\chi(\cF) = \ell+1$.
Then for every $\delta > 0$ there exist $\epsilon > 0$ and $N_0\in \NN$
such that every $\cF$-free graph on $n\ge N_0$ vertices with at 
least $(1-\epsilon) {\rm ex}(n,\cF)$ edges can be transformed to the Tur\'an graph 
$T(n,\ell)$ by deleting and adding at most $\delta n^2$ edges. \qed
\end{theorem}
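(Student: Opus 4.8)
The plan is to derive the Erdős–Simonovits stability theorem for a family $\cF$ with $\chi(\cF)=\ell+1$ from the single-graph case, and to prove the single-graph case via a counting/regularity argument. The first reduction is routine: if $F\in\cF$ realizes $\chi(F)=\ell+1$, then an $\cF$-free graph is in particular $F$-free, and since $\mathrm{ex}(n,F)\le\mathrm{ex}(n,\cF)\le(1+o(1))\,\mathrm{ex}(n,F)$ by Erdős–Stone–Simonovits (both densities equal $1-\frac1\ell$), an $\cF$-free graph with $\ge(1-\epsilon)\,\mathrm{ex}(n,\cF)$ edges is an $F$-free graph with $\ge(1-2\epsilon)\,\mathrm{ex}(n,F)$ edges for large $n$. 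So it suffices to prove the statement for a single graph $F$ with $\chi(F)=\ell+1$ and $t:=|V(F)|$.

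\textbf{The main argument.} Fix $\delta>0$. I would apply Szemerédi's regularity lemma to the $F$-free graph $G$ on $n$ vertices with $e(G)\ge(1-\epsilon)\binom n2(1-\frac1\ell)$, with regularity parameter $\eta=\eta(\delta)$ chosen small and a lower bound $m_0$ on the number of parts, obtaining a partition $V_0\cup V_1\cup\cdots\cup V_k$ with $|V_0|\le\eta n$ and all other parts of common size. Form the reduced (cluster) graph $R$ on $[k]$ whose edges are the pairs $(i,j)$ that are $\eta$-regular with density at least $d:=\frac1{2\ell}$ (say). A standard computation shows that deleting all edges inside clusters, incident to $V_0$, in irregular pairs, or in low-density pairs removes at most $(\eta+d+\text{small})n^2<\frac\delta4 n^2$ edges, so $R$ inherits edge density at least $1-\frac1\ell-o(1)$, i.e.\ $e(R)\ge(1-\frac1\ell-o(1))\binom k2$. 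Now the key dichotomy: by the embedding (counting) lemma, if $R$ contained a copy of $K_{\ell+1}$, then $G$ would contain $F$ (since $F$ is $(\ell+1)$-colorable it embeds into a blow-up of $K_{\ell+1}$, and each regular pair of density $\ge d$ with parts of size $\ge t$ contains all such bounded blow-ups once $n$ is large) — contradiction. Hence $R$ is $K_{\ell+1}$-free with $k$ vertices and at least $(1-\frac1\ell)\binom k2-o(k^2)$ edges. Applying the Erdős–Simonovits stability theorem \emph{for $K_{\ell+1}$} — which is the base case one proves directly — $R$ can be made $\ell$-partite by changing at most $\frac{\delta}{8}k^2$ edges; equivalently there is a partition $[k]=U_1\cup\cdots\cup U_\ell$ with at most $\frac{\delta}{8}k^2$ edges of $R$ inside the parts.

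\textbf{Pulling back to $G$.} Let $W_j=\bigcup_{i\in U_j}V_i$ for $j\in[\ell]$ (distributing $V_0$ arbitrarily); this is an $\ell$-partition of $V(G)$. The edges of $G$ inside some $W_j$ either were already deleted in forming $R$ (at most $\frac\delta4 n^2$ of them) or correspond to an edge of $R$ inside some $U_j$ (at most $\frac{\delta}{8}k^2\cdot(n/k)^2\le\frac\delta8 n^2$ of them). So $G$ becomes $\ell$-partite after deleting $\le\frac{3\delta}{8}n^2$ edges; comparing edge counts, this $\ell$-partite graph has $\ge(1-\epsilon')\binom n2(1-\frac1\ell)$ edges, forcing each part to have size $\frac n\ell+o(n)$ and nearly all cross-edges present, so it differs from $T(n,\ell)$ in $o(n^2)<\frac\delta2 n^2$ further edges. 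Altogether $G$ is within $\delta n^2$ edge-edits of $T(n,\ell)$, as required.

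\textbf{Anticipated obstacle.} The one genuinely non-trivial ingredient is the base case: stability for the single clique $K_{\ell+1}$. The clean route is a direct argument — e.g.\ a symmetrization / Zykov-type argument or an application of the Kruskal–Katona/Moon–Moser flavor inequalities showing that a $K_{\ell+1}$-free graph with near-extremal edge count has an almost-balanced $\ell$-partition with almost no internal edges — but making this quantitative and uniform in $n$ requires care, and it is really the heart of the theorem; everything else is packaging via regularity. An alternative that avoids bootstrapping would be to run a purely combinatorial stability argument for $K_{\ell+1}$ (induction on $\ell$, peeling off a near-maximum-degree vertex whose neighborhood is near-extremal $K_\ell$-free), which is the approach I would actually write up if a self-contained proof is wanted.
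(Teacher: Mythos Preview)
The paper does not prove this theorem at all: it is stated as the classical Erd\H{o}s--Simonovits stability theorem with a \qed\ box immediately after the statement, and is used only as a black box (e.g.\ in the proof of Lemma~\ref{lem:1748}). So there is no ``paper's own proof'' to compare against.

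Your outline is the standard regularity-based proof and is essentially correct. The reduction from a family $\cF$ to a single $F\in\cF$ with $\chi(F)=\chi(\cF)$ is fine; the regularity/counting step showing the reduced graph $R$ is $K_{\ell+1}$-free is the usual embedding-lemma argument; and pulling an $\ell$-partition of $R$ back to $G$ with the stated edge accounting is routine. You correctly flag the one substantive point: you are invoking stability for $K_{\ell+1}$ on $R$, so the argument as written is a reduction to that base case rather than a self-contained proof. The inductive peel-off-a-high-degree-vertex argument you mention (or a direct Zykov/F\"uredi-type computation) does give the $K_{\ell+1}$ base case cleanly, and would complete the proof. One small cosmetic remark: in the final step you compare the resulting $\ell$-partite subgraph to $T(n,\ell)$ and assert the parts have size $n/\ell+o(n)$; strictly speaking this uses a quantitative convexity estimate (essentially Lemma~\ref{LEMMA:Lagrangian-complete-r-graph} with $r=2$), which you should state rather than hide inside $o(n)$ if you want explicit constants.
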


The stability phenomenon has been used to determine ${\rm ex}(n,\cF)$ exactly in many cases.
It was first used by Simonovits in~\cite{SI68} to determine ${\rm ex}(n,F)$ exactly for all 
edge-critical graphs $F$ and large $n$,
and then by several authors (e.g. see \cites{FS05,KS05a,KS05b,MP07,PI13,BIJ17,NY18}) to prove 
exact results for hypergraphs.
In this article, stability will always mean stability relative to some intended class $\gH$ of 
'almost extremal' $\cF$-free graphs and we distinguish the following types of stability that 
have been studied in the literature. 

\begin{dfn}\label{d:1715}
	Let $\cF$ be a nondegenerate family of $r$-graphs, where $r\ge 2$, and let $\gH$ be a class 
	of $\cF$-free $r$-graphs.
	\begin{enumerate}[label=\alabel]
		\item\label{it:stab-e} If for every $\delta>0$ there exist $\epsilon>0$ and $N_0\in \NN$
			such that every $\cF$-free $r$-graph~$\cH$ on $n\ge N_0$ vertices 
			with $|\cH|\ge (\pi(\cF)/r!-\eps)n^r$ becomes a 
			subgraph of some member of~$\gH$ after removing at most $\delta |\cH|$ edges, then 
			$\cF$ is said to be \emph{edge-stable} with respect to $\gH$.
 		\item\label{it:stab-v} If for every $\delta>0$ there exist $\epsilon>0$ and $N_0\in \NN$
			such that every $\cF$-free $r$-graph~$\cH$ on $n\ge N_0$ vertices 
			with $|\cH|\ge (\pi(\cF)/r!-\eps)n^r$ becomes a 
			subgraph of some member of~$\gH$ after removing at most $\delta |V(\cH)|$ vertices, 
			then $\cF$ is said to be \emph{vertex-stable} with respect to $\gH$.
		\item\label{it:stab-d} If there exist $\eps>0$ and $N_0$ such that every $\cF$-free 
			$r$-graph $\cH$ on $n\ge N_0$ vertices 
			with $\delta(\cH)\ge \bigl(\pi(\cF)/(r-1)!-\epsilon\bigr)n^{r-1}$ 
			is a subgraph of some member of $\gH$ we say that $\cF$ is \emph{degree-stable}
			with respect to~$\gH$.
	\end{enumerate}
\end{dfn} 

As a trivial example, every nondegenerate family $\cF$ is stable in all three senses with 
respect to the class $\mathfrak{Forb}(\cF)$ of all $\cF$-free $r$-graphs. 
More interestingly, Theorem~\ref{THM:Erdos-Simonovits-stability} tells us that every family 
$\cF$ of graphs with $\chi(F)=\ell+1\ge 3$ is edge-stable with respect to the class 
$\gT_\ell=\{T(n, \ell)\colon n\in \NN\}$ of $\ell$-partite Tur\'an graphs. 

In general, if a family $\cF$ of $r$-graphs is degree-stable with respect to some class $\gH$,
then a standard vertex deletion argument (see e.g. 
Fact~\ref{FACT:dense-hygp-has-large-min-deg}~\ref{it:26a}) shows that $\cF$ is vertex-stable 
with respect to $\gH$ as well. Moreover, since any $\delta v(\cH)$ vertices of an $r$-graph 
$\cH$ cover at most $\delta v(\cH)^r$ edges of $\cH$, it is in all interesting examples the 
case that if $\cF$ is vertex-stable with respect to $\gH$, then it is edge-stable with respect 
to $\gH$ as well. 

The goal of this work is to provide a unified framework for the stability of certain classes 
of graph and hypergraph families.
Our main result (Theorem~\ref{THM:Psi-trick:G-extendable-implies-degree-stability})
reduces the stability of many problems to the much simpler task of checking that 
$\cF$-free graphs or hypergraphs with large minimum degree have a property we call 
vertex-extendability (see Definition~\ref{DFN:vertex-extendable}).
The approach is designed for degree-stability and thus it not only simplifies the proofs 
of many known stability theorems but also gives stronger forms of these stability theorems.

\subsection{Main result}\label{SUBSEC:main-result}
Our results can be regarded as adding a new ingredient to Zykov's symmetrization 
method~\cite{Zy} and we commence by describing an `axiomatic' framework for the 
determination of extremal numbers by means of symmetrization. 

Given two $r$-graphs $F$ and $\cH$ a map $\phi\colon V(F) \to V(\cH)$ is said
to be a {\em homomorphism} if it preserves edges,
i.e., if $\phi(E) \in \cH$ holds for all $E \in F$.
If $\phi$ is surjective and every edge of~$\cH$ is an image of an edge of $F$, i.e., 
$\cH=\{\phi(E)\colon E\in F\}$, we call $\cH$ a \emph{homomorphic image} of $F$. 
Furthermore, $\cH$ is {\em $F$-hom-free} if there is no homomorphism from $F$ to $\cH$.
For a family $\cF$ of $r$-graphs,
we say that $\cH$ is {\em $\cF$-hom-free} if it is $F$-hom-free for every $F\in \cF$.
The forbidden families $\cF$ studied in this article have the following property.

\begin{definition}[Blowup-invariance]\label{DFN:blowup-invariant}
A family $\cF$ of $r$-graphs is \emph{blowup-invariant} if every $\cF$-free $r$-graph 
is $\cF$-hom-free as well.
\end{definition}

For instance, for every $\ell\ge 2$ the families of graphs $\{K_\ell\}$ and $\{C_3, \dots, C_{2\ell-1}\}$ are blowup invariant, whilst $\{C_5\}$ is not blowup-invariant. In the graph 
case one can easily check that a one-element family $\{F\}$ is blowup-invariant if and only 
if $F$ is a clique, but for hypergraphs blowup-invariant families consisting of a single 
hypergraph $F$ are much more common. In fact, if every pair of vertices of $F$ is covered 
by an edge of $F$, then $\{F\}$ is blowup-invariant. One confirms easily that every family $\cF$
closed under taking homomorphic images is blowup-invariant.

Let us now fix an $r$-graph $\cH$. For every $v\in V(\cH)$ we call 
\begin{align}
L_{\cH}(v) = \left\{A\in \binom{V(\cH)}{r-1}\colon A\cup\{v\}\in \cH\right\} \notag
\end{align}
the {\em link} of $v$. Two vertices $u,v\in V(\cH)$
are said to be {\em equivalent} if $L_{\cH}(u) = L_{\cH}(v)$.
Evidently, equivalence is an equivalence relation. 
We say that $\cH$ is {\em symmetrized}
if for any two non-equivalent vertices $u, v\in V(\cH)$ there is an edge $E\in\cH$ containing 
both of them. For instance, a symmetrized graph is the same as a complete multipartite graph.
We shall prove the following result by means of Zykov's symmetrization method.

\begin{theorem}\label{THM:Zykov-symmetrization-Turan-number}
Suppose that $\cF$ is a blowup-invariant family of $r$-graphs.
If $\gH$ denotes the class of all symmetrized $\cF$-free $r$-graphs,
then ${\rm ex}(n,\cF) = \gh(n)$ holds for every $n\in\NN$, where
$\gh(n) = \max\left\{|\cH|\colon \cH \in \gH \text{ and }v(\cH) = n\right\}$.
\end{theorem}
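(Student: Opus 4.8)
The plan is to prove Theorem~\ref{THM:Zykov-symmetrization-Turan-number} by Zykov symmetrization, showing that among all $\cF$-free $r$-graphs on $n$ vertices of maximum size, at least one is symmetrized; since every symmetrized $\cF$-free $r$-graph is itself $\cF$-free, this immediately gives $\ex(n,\cF)=\gh(n)$. First I would fix $n$ and pick an extremal $\cF$-free $r$-graph $\cH$ on vertex set $[n]$, i.e.\ one with $|\cH|=\ex(n,\cF)$. If $\cH$ is already symmetrized we are done, so suppose there are two non-equivalent vertices $u,v$ not covered by a common edge, i.e.\ $L_{\cH}(u)\neq L_{\cH}(v)$ and no edge contains both. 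Without loss of generality $|L_{\cH}(u)|\ge|L_{\cH}(v)|$ (strictly, or equal—both cases handled the same way). Form the \emph{symmetrized} $r$-graph $\cH'$ obtained from $\cH$ by deleting $v$ together with all edges through it and then adding a \emph{clone} of $u$: a new vertex $v'$ with $L_{\cH'}(v')=L_{\cH}(u)$, where we use that $u$ and $v$ are non-adjacent so no edge of the form $\{u,v',\dots\}$ is created that would need a description. The point is that $\cH'$ is a subgraph of a blowup of $\cH$ (replace $u$ by two copies $u,v'$ and every other vertex by one copy), hence a homomorphic image issue does not arise; more precisely $\cH'$ admits a homomorphism into $\cH$ sending $v'\mapsto u$ and fixing everything else.

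The key step is to argue that $\cH'$ is $\cF$-free. Suppose not; then $\cH'$ contains some $F\in\cF$ as a subgraph. Composing the inclusion $F\hookrightarrow\cH'$ with the homomorphism $\cH'\to\cH$ just described yields a homomorphism $F\to\cH$. By blowup-invariance of $\cF$ (Definition~\ref{DFN:blowup-invariant}), the $\cF$-free $r$-graph $\cH$ is $\cF$-hom-free, so there is no homomorphism from $F$ to $\cH$---a contradiction. Hence $\cH'$ is $\cF$-free. Now compare sizes: the number of edges lost in passing from $\cH$ to $\cH'$ is $|L_{\cH}(v)|$ (the edges through $v$), and the number gained is $|L_{\cH}(u)|$ (the edges through the new clone $v'$), using again that $u,v$ share no edge so there is no double counting. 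Since $|L_{\cH}(u)|\ge|L_{\cH}(v)|$ we get $|\cH'|\ge|\cH|$, so $\cH'$ is also extremal, and crucially $v'$ is now equivalent to $u$.

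To finish I would run a \emph{potential-function} argument to guarantee termination. Define a weight such as $\sum_{v\in V(\cH)}|L_{\cH}(v)|^2$ (equivalently, count pairs of non-adjacent equivalent vertices, or count ordered pairs of vertices lying in a common edge). Each symmetrization step as above strictly increases this quantity: replacing the multiset of link sizes $\{\dots,|L(u)|,|L(v)|,\dots\}$ by $\{\dots,|L(u)|,|L(u)|,\dots\}$ with $|L(u)|\ge|L(v)|$ strictly increases $\sum|L(v)|^2$ unless $|L(u)|=|L(v)|$, and in the equality case one should instead track, say, the number of ordered pairs $(x,y)$ with $L(x)=L(y)$, which strictly increases because $v'$ becomes equivalent to $u$ while no previously-equivalent pair is destroyed. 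Since this bounded integer quantity cannot increase forever, after finitely many steps we reach an extremal $\cF$-free $r$-graph in which every pair of non-equivalent vertices lies in a common edge---that is, a symmetrized one. The main obstacle, and the place to be careful, is exactly this termination/monotonicity bookkeeping: one must choose a potential that is simultaneously monotone under \emph{both} the strict-inequality and the equality versions of the symmetrization move, and verify that the move never merges the two vertices being symmetrized into one (it does not, since $u$ and $v$ are non-adjacent) and never accidentally shrinks $V(\cH)$. Once the potential is chosen correctly the rest is routine.
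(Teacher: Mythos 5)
You are using the same overall strategy as the paper: a Zykov symmetrization step that preserves $\cF$-freeness (because the symmetrized hypergraph maps homomorphically onto the original one, and $\cF$ is blowup-invariant), does not decrease the edge count, and makes progress towards a symmetrized graph; the homomorphism-plus-blowup-invariance step is exactly the paper's key argument and you have it right. The difference is in how termination is handled, and there you have a real error. The potential $\sum_v|L_\cH(v)|^2$ is \emph{not} monotone under a single-vertex symmetrization: replacing $v$'s link by $u$'s link changes the degree of a third vertex $w$ by the codegree difference $d_\cH(u,w)-d_\cH(v,w)$, which can be negative, and one can build small graph examples in which $\sum d^2$ strictly drops even though $d(u)>d(v)$. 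You are rescued only because you start from an \emph{extremal} $\cH$, so the case $|L(u)|>|L(v)|$ never actually arises (it would yield an $\cF$-free $r$-graph with more edges); you thus silently fall through to the equal-degree case and rely entirely on your fallback potential, the number of ordered pairs $(x,y)$ with $L_\cH(x)=L_\cH(y)$. That one does work, but the claim that no previously equivalent pair is destroyed is not automatic: one must check that if $L_\cH(w)=L_\cH(w')$ for $w,w'\neq v$ then $L_{\cH'}(w)=L_{\cH'}(w')$, which holds because the change to $L(w)$ is a function of $L_\cH(w)$, $L_\cH(u)$, and $L_\cH(v)$ alone (so equivalence can only coarsen).

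The paper sidesteps this bookkeeping altogether. It assumes $\ex(n,\cF)>\gh(n)$ for contradiction and, among all $\cF$-free $n$-vertex $r$-graphs with more than $\gh(n)$ edges, fixes one with the minimum number $m$ of equivalence classes. It then symmetrizes an \emph{entire} equivalence class $C_1$ onto the common link of another class $C_2$ (with $d_\cH(C_1)\le d_\cH(C_2)$); this merges $C_1$ into $C_2$ and can only coarsen the remaining classes, so the result $\cH'$ has strictly fewer than $m$ classes while still exceeding $\gh(n)$ edges. Minimality of $m$ then forces $\cH'$ to contain some $F\in\cF$, and the homomorphism $\cH'\to\cH$ gives the contradiction immediately --- no iteration and no secondary potential needed. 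Your vertex-at-a-time variant with the lexicographic pair $(|\cH|,\Psi(\cH))$ is, in fact, precisely what the paper does later in the proof of Theorem~\ref{THM:Psi-trick:G-extendable-implies-degree-stability-full-version}, where the whole-class move would be too destructive; for the present statement the whole-class move is cleaner and avoids the pitfall you ran into.
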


Let us observe that this statement is very similar to the Lagrangian method developed 
and utilised by Motzkin-Straus~\cite{MS65}, Sidorenko~\cite{Sido87}, 
Frankl-F\"{u}redi~\cite{FF89}, and many others. 
Preparing the statement of our main result, we introduce some further notions. 
Recall that a class $\gH$ of $r$-graphs is called \emph{hereditary} if it is closed under
taking induced subgraphs. 

\begin{definition}[Symmetrized-stability]\label{DFN:symmetrized-stable}
Let $\cF$ be a family of $r$-graphs and let $\gH$ be a class of $\cF$-free $r$-graphs. 
We say that $\cF$ is \emph{symmetrized-stable} with respect to $\gH$
if there exist $\epsilon>0$ and $N_0$ such that every
symmetrized $\cF$-free $r$-graphs $\cH$
on $n\ge N_0$ vertices with $|\cH| \ge (\pi(\cF)/r!-\epsilon)n^{r}$
is a subgraph of a member of $\gH$.
\end{definition}

The next definition might be the most important one in this article. 
 
\begin{definition}[Vertex-extendibility]\label{DFN:vertex-extendable}
Let $\cF$ be a family of $r$-graphs and let $\gH$ be a class of $\cF$-free $r$-graphs. 
We say that $\cF$ is \emph{vertex-extendable} 
with respect to $\gH$ if there exist $\zeta>0$ and $N_0\in\NN$ such that 
for every $\cF$-free $r$-graph $\cH$ on $n\ge N_0$
vertices satisfying $\delta(\cH)\ge \bigl(\pi(\cF)/(r-1)!-\zeta\bigr)n^{r-1}$ 
the following holds: if $\cH-v$ is a subgraph of a member of $\gH$ for some 
vertex $v\in V(\cH)$, then $\cH$ is a subgraph of a member of $\gH$ as well. 
\end{definition}

We can now state our sufficient conditions for degree-stability.

\begin{theorem}[Main result]\label{THM:Psi-trick:G-extendable-implies-degree-stability}
Suppose that $\cF$ is a blowup-invariant nondegenerate family of $r$-graphs and that 
$\gH$ is a hereditary class of $\cF$-free $r$-graphs. If~$\cF$ is symmetrized-stable 
and vertex-extendable with respect to~$\gH$, then~$\cF$ is degree-stable with respect 
to~$\gH$ as well.  
\end{theorem}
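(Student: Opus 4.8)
The plan is to add the symmetrization of $\cH$ as a preprocessing step to the two hypotheses: starting from an $\cF$-free $r$-graph $\cH$ of large minimum degree, I would run Zykov's symmetrization process to reach a symmetrized $\cF$-free $r$-graph $\cH^{*}$, apply symmetrized-stability to see that $\cH^{*}$ sits below a member of $\gH$, and then \emph{reverse} the symmetrization one step at a time, using vertex-extendability to propagate the property ``is a subgraph of a member of $\gH$'' back to $\cH$ itself. Concretely, let $\zeta>0$ and $N_1$ be the constants furnished by vertex-extendability (Definition~\ref{DFN:vertex-extendable}), and let $\epsilon_1>0$, $N_2$ be those furnished by symmetrized-stability (Definition~\ref{DFN:symmetrized-stable}). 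I will establish degree-stability with a constant $\epsilon\in(0,\zeta)$ chosen small in terms of $\zeta,\epsilon_1$ and with $N_0$ large in terms of $N_1,N_2$. Given an $\cF$-free $\cH$ on $n\ge N_0$ vertices with $\delta(\cH)\ge(\pi(\cF)/(r-1)!-\epsilon)n^{r-1}$, the identity $r|\cH|=\sum_{v}|L_\cH(v)|\ge n\,\delta(\cH)$ gives $|\cH|\ge(\pi(\cF)/r!-\epsilon/r)n^{r}\ge(\pi(\cF)/r!-\epsilon_1)n^{r}$ once $\epsilon\le r\epsilon_1$, and the goal is to show $\cH$ is a subgraph of a member of $\gH$.

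First I would apply Zykov's symmetrization exactly as in the proof of Theorem~\ref{THM:Zykov-symmetrization-Turan-number}: while the current $r$-graph is not symmetrized, choose non-equivalent vertices $u,v$ lying in no common edge with $|L(u)|\ge|L(v)|$ and replace $L(v)$ by $L(u)$. This yields a chain $\cH=\cH_0,\cH_1,\dots,\cH_m=\cH^{*}$ in which $\cH^{*}$ is symmetrized, every $\cH_j$ has $n$ vertices, $|\cH_{j-1}|\le|\cH_j|$, and $\cH_j$ differs from $\cH_{j-1}$ only in the link of a single vertex $v_j$, so that $\cH_{j-1}-v_j=\cH_j-v_j$. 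Since $\cF$ is blowup-invariant (Definition~\ref{DFN:blowup-invariant}), $\cH_{j-1}$ is $\cF$-hom-free, and the map fixing every vertex except sending $v_j$ to $u_j$ is a homomorphism $\cH_j\to\cH_{j-1}$; hence inductively every $\cH_j$ is $\cF$-hom-free, in particular $\cF$-free. As $|\cH^{*}|\ge|\cH|\ge(\pi(\cF)/r!-\epsilon_1)n^{r}$ and $n\ge N_2$, symmetrized-stability provides a member $\cG^{*}\in\gH$ with $\cH^{*}\subseteq\cG^{*}$.

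Next I would reverse the chain by downward induction on $j$, proving that each $\cH_j$ is a subgraph of a member of $\gH$. This is clear for $j=m$. If it holds for $\cH_j$, then $\cH_{j-1}-v_j=\cH_j-v_j$ is a subgraph of $\cH_j$, hence of a member of $\gH$; applying vertex-extendability to $\cH_{j-1}$ at the vertex $v_j$ — legitimate as soon as $n\ge N_1$ and $\delta(\cH_{j-1})\ge(\pi(\cF)/(r-1)!-\zeta)n^{r-1}$ — shows that $\cH_{j-1}$ is a subgraph of a member of $\gH$. Taking $j=0$ completes the argument.

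What remains, and this is the main obstacle, is to run the symmetrization so that the minimum-degree hypothesis used in the reversal holds throughout, i.e.\ $\delta(\cH_j)\ge(\pi(\cF)/(r-1)!-\zeta)n^{r-1}$ for every $j$. The point in our favour is that a single symmetrization step perturbs any individual degree by at most a codegree, hence by at most $\binom{n-2}{r-2}=o(n^{r-1})$: the cloned vertex $v_j$ only gains, the target $u_j$ is untouched, and any third vertex $w$ loses at most $\mathrm{cod}_{\cH_{j-1}}(v_j,w)\le\binom{n-2}{r-2}$. Consequently, if one can arrange the clonings so that at most $(\zeta-\epsilon)(r-2)!\,n$ of them occur — for instance by bounding the number of steps for a near-extremal $\cH$, or by always cloning a current minimum-degree vertex, or by a monotonicity/potential argument on the degree sequence — then every $\cH_j$ retains minimum degree at least $\delta(\cH)-(\zeta-\epsilon)n^{r-1}\ge(\pi(\cF)/(r-1)!-\zeta)n^{r-1}$, and the proof goes through; the remaining juggling of the constants $\epsilon,\epsilon_1,\zeta$ and the thresholds $N_0,N_1,N_2$ (and the harmless use of the heredity of $\gH$) is routine.
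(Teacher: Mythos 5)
Your overall plan — symmetrize forward, invoke symmetrized-stability at the end, then undo one cloning at a time via vertex-extendability — is in the right spirit, and you have correctly identified the obstacle, but the obstacle you flag in your final paragraph is a genuine gap, not a routine constant-chasing issue. The number of single-vertex clonings needed to reach a symmetrized $r$-graph $\cH^{*}$ is in general $\Theta(n)$ (e.g.\ $\cH$ may start with $\Theta(n)$ equivalence classes while $\cH^{*}$ has $O(1)$), and each cloning can decrease the degree of some third vertex $w$ by as much as $\operatorname{cod}_{\cH_{j-1}}(v_j,w)=\Theta(n^{r-2})$. The cumulative loss is therefore $\Theta(n^{r-1})$, comparable to the degree itself; none of the fixes you suggest (bounding the number of steps, cloning a minimum-degree vertex, a monotonicity argument on the degree sequence) is clearly sound, and the minimum-degree hypothesis needed to run vertex-extendability at every step of the reversal is not available. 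Indeed, after even one cloning the new hypergraph $\cH_{j}$ can have a few vertices of very low degree, and the chain simply carries that defect forward.

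The paper resolves this with a different architecture. It proves a stronger statement (Theorem~\ref{THM:Psi-trick:G-extendable-implies-degree-stability-full-version}) about the weaker conclusion that $\cH-Z_\epsilon(\cH)\in\gH^{+}$, where $Z_\epsilon(\cH)$ is the set of low-degree vertices from Fact~\ref{FACT:dense-hygp-has-large-min-deg}; this makes the intermediate property robust against the degree degradation that a cloning causes. Instead of a long chain, it performs exactly \emph{one} symmetrization step: pick a counterexample $\cH$ on $n$ vertices that lexicographically maximizes the pair $\bigl(|\cH|,\Psi(\cH)\bigr)$, where $\Psi(\cH)=\sum_i|C_i|^2$ sums squared sizes of equivalence classes, clone one vertex $v_1\in C_1$ onto $v_2\in C_2$, and observe the pair strictly increases (either $|\cH|$ goes up, or $|\cH|$ stays equal and $\Psi$ goes up). Hence the symmetrized hypergraph $\cH'$ is not a counterexample, so $\cH'-Z_\epsilon(\cH')\in\gH^{+}$; setting $Q=Z_\epsilon(\cH')\cup\{v_1\}$, which is small ($|Q|\le 2\epsilon^{1/2}n$), one has $\cH-Q=\cH'-Q\in\gH^{+}$, and Lemma~\ref{LEMMA:min-deg-imply-epsilon-G-extendable} (iterated vertex-extendability, applied after first deleting the genuine low-degree set $Z$) restores the at most $2\epsilon^{1/2}n$ missing vertices, giving the contradiction $\cH-Z\in\gH^{+}$. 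In short, where you try to control degrees along a long symmetrization chain, the paper replaces the chain by a single step plus an extremal/potential-function argument, and tolerates a small set of bad vertices by building $Z_\epsilon$ into the induction. To repair your proof you would need an analogue of this $\Psi$-trick or some other device to avoid controlling $\delta(\cH_j)$ uniformly over all $j$.
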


In practice the assumptions on $\gH$ are often easy to verify but it may happen that 
the family $\cF$ we want to study fails to be blowup-invariant. If in such a situation 
we know for any reason that $\cF$ is vertex-stable with respect to $\gH$, we can improve
this information to degree-stability. 

\begin{theorem}\label{PROP:vertex-stable-and-vertex-extendable-implies-degree-stability}
Suppose that $\cF$ is a nondegenerate family of $r$-graphs and that
$\gH$ is a hereditary class of $\cF$-free $r$-graphs.
If $\cF$ is vertex-stable and vertex-extendable with respect to $\gH$, then it 
is degree-stable with respect to $\gH$ as well. 
\end{theorem}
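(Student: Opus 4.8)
The plan is to combine the two hypotheses directly, using vertex-stability as the `base case' and vertex-extendability as the `inductive step' of an argument that strips away the bad vertices one at a time. Fix the $\zeta>0$ and $N_1$ supplied by vertex-extendability. Let $\cH$ be an $\cF$-free $r$-graph on $n$ vertices with $\delta(\cH)\ge\bigl(\pi(\cF)/(r-1)!-\epsilon\bigr)n^{r-1}$, where $\epsilon>0$ is a small constant to be chosen and $n\ge N_0$ is large; we want to show $\cH$ is a subgraph of a member of $\gH$.

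First I would apply vertex-stability with a suitable $\delta$: it yields $\epsilon'>0$ and $N'$ such that any $\cF$-free $r$-graph on at least $N'$ vertices with edge density above $\pi(\cF)/r!-\epsilon'$ becomes a subgraph of some member of $\gH$ after deleting at most $\delta$ times its vertex count. Since a minimum-degree condition of the stated form forces $|\cH|\ge\frac{1}{r}\bigl(\pi(\cF)/(r-1)!-\epsilon\bigr)n^{r}$, choosing $\epsilon$ small compared to $\epsilon'$ puts $\cH$ in the scope of vertex-stability; so there is a set $S\subseteq V(\cH)$ with $|S|\le \delta n$ such that $\cH-S$ is a subgraph of some $G\in\gH$. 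Enumerate $S=\{v_1,\dots,v_s\}$ and set $\cH_i=\cH-\{v_{i+1},\dots,v_s\}$, so that $\cH_0=\cH-S$ and $\cH_s=\cH$. The idea is to show by induction on $i$ that each $\cH_i$ is a subgraph of a member of $\gH$. The base case $i=0$ is what vertex-stability gave us. For the inductive step, note that $\cH_i=\cH_{i-1}+v_i$ inside $\cH$, so $\cH_{i-1}$ is the graph $\cH_i-v_i$; if $\cH_i$ is an $\cF$-free $r$-graph with large enough minimum degree and at least $N_1$ vertices, then vertex-extendability upgrades `$\cH_i-v_i\subseteq$ some member of $\gH$' to `$\cH_i\subseteq$ some member of $\gH$', completing the induction and hence the proof once $i=s$.

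The main obstacle is that vertex-extendability is stated for $r$-graphs whose minimum degree is almost $\pi(\cF)/(r-1)!\cdot n^{r-1}$ \emph{with respect to their own vertex count}, and the intermediate graphs $\cH_i$ are induced subgraphs of $\cH$ on $n-s+i\ge n-\delta n$ vertices. Deleting a single vertex can only lower another vertex's degree by at most $\binom{n-1}{r-2}=o(n^{r-1})$, and removing all of $S$ decreases any surviving vertex's degree by at most $|S|\binom{n}{r-2}\le \delta n\binom{n}{r-2}=O(\delta n^{r-1})$; hence every $\cH_i$ still has minimum degree at least $\bigl(\pi(\cF)/(r-1)!-\epsilon\bigr)n^{r-1}-O(\delta n^{r-1})$, and since $n-s+i\le n$ this is at least $\bigl(\pi(\cF)/(r-1)!-\zeta\bigr)(n-s+i)^{r-1}$ provided $\epsilon$ and $\delta$ are chosen small enough relative to $\zeta$ (here nondegeneracy, $\pi(\cF)>0$, is what makes the target quantity positive and the arithmetic meaningful). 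We also need $n-s+i\ge N_1$, which holds for all $i$ once $n\ge 2N_1$ and $\delta\le 1/2$, say. Finally, each $\cH_i$ is $\cF$-free as a subgraph of $\cH$, so all hypotheses of Definition~\ref{DFN:vertex-extendable} are met at every step. Thus the quantifiers can be arranged as: given $\zeta,N_1$ from vertex-extendability, pick $\delta$ small, feed $\delta$ into vertex-stability to get $\epsilon',N'$, then take $\epsilon$ small enough that the density threshold and all the degree inequalities above go through, and $N_0=\max(N',2N_1)$ (enlarged if necessary). One subtlety worth a sentence in the writeup: vertex-stability as in Definition~\ref{d:1715}\ref{it:stab-v} allows deleting $\delta|V(\cH)|$ vertices rather than bounding $|S|$ a priori, but that is exactly the bound $|S|\le\delta n$ used above, so no extra care is needed beyond recording it.
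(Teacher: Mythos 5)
Your proof is correct and follows essentially the same route as the paper's: apply vertex-stability to obtain a small bad set $S$, then reinsert the vertices of $S$ one at a time via vertex-extendability, with the same quantifier ordering (fix $\zeta,N_1$ from vertex-extendability, then choose $\delta$, feed it to vertex-stability, then pick $\epsilon$ and $N_0$) and the same degree bookkeeping. The only cosmetic difference is that the paper abstracts the iterative reinsertion step into Lemma~\ref{LEMMA:min-deg-imply-epsilon-G-extendable}, proved via a minimal-counterexample argument rather than an explicit enumeration of $S$, so that it can also be reused in the proof of Theorem~\ref{THM:Psi-trick:G-extendable-implies-degree-stability}; your write-up simply inlines that lemma.
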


\subsection{Further results and applications}\label{subsec:0054}

An $r$-graph $\cH$ is said to be a \emph{blowup} of another $r$-graph $F$ is 
there exists a map $\psi\colon V(\cH)\lra V(F)$ such that every $E\in \binom{V(\cH)}r$
satisfies the equivalence $\psi(E)\in F \Longleftrightarrow E\in \cH$.  
If $\psi$ is surjective, the blowup is called \emph{proper}. 
Subgraphs of blowups of $F$ are called \emph{$F$-colorable}.  

For integers $\ell\ge r\ge 2$ let $\gK^r_\ell$ be the class 
of all blowups of $K^r_\ell$. 
If $r=2$ we omit the superscript and just write $\gK_\ell$ for the class of complete
$\ell$-partite graphs (whose vertex classes are allowed to be empty). Most but not all 
stability results described below are with respect to classes of the form $\gK^r_\ell$. 

\subsubsection{Graphs}

The classical stability theorem of Erd\H{o}s and Simonovits 
(Theorem~\ref{THM:Erdos-Simonovits-stability}) informs us that every family $\cF$ of graphs
with $\chi(\cF)=\ell+1\ge 3$ is edge-stable with respect to $\gK_\ell$. Complementing this 
result one can also characterise the 
families of graphs which are degree-stable and vertex-stable with respect to $\gK_\ell$. 
To this end we recall that a graph~$F$ is said to be \emph{edge-critical} if it has an 
edge $e\in F$ such that $\chi(F-e)<\chi(F)$ and \emph{matching-critical} if there exists 
an induced matching $M\subseteq F$ such that $\chi(F-M)<\chi(F)$. 
More generally, we call a family $\cF$ of graphs 
\emph{edge-critical} or \emph{matching-critical} if there exists a graph $F\in \cF$ 
with $\chi(F)=\chi(\cF)$ that is edge-critical or matching-critical.
In the result that follows, part~\ref{it:13b} is due to 
Erd\H{o}s and Simonovits~\cite{ES73}, while part~\ref{it:13a} might very well be new.

\begin{theorem}\label{thm:19}
	A family $\cF$ of graphs with $\chi(\cF)=\ell+1\ge 3$ is
	\begin{enumerate}[label=\alabel]
		\item\label{it:13a} vertex-stable with respect to $\gK_\ell$ if and only if it
			is matching-critical
		\item\label{it:13b} and degree-stable with respect to $\gK_\ell$ if and only if it
			is edge-critical.
	\end{enumerate}
\end{theorem}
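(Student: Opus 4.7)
The plan handles the four implications in parts~\ref{it:13a} and~\ref{it:13b} separately, with each necessity direction supplied by an explicit construction and each sufficiency direction argued by a stability-then-cleanup approach. For the necessity directions, suppose $\cF$ fails to be matching-critical (resp.\ edge-critical); we claim that $\cH_n := T(n,\ell)+M$ is $\cF$-free, where $M$ is a matching of linear size (resp.\ a single edge) placed inside one part of the Tur\'an graph. Any copy of some $F\in\cF$ in $\cH_n$ splits as $(F\cap T(n,\ell))\cup(F\cap M)$ with the first piece $\ell$-colorable and the second a matching (resp.\ an edge) inside $F$, so $F$ itself is matching-critical (resp.\ edge-critical), contradicting the hypothesis. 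Since $\cH_n$ has at least $\ex(n,\cF)$ edges and minimum degree $\ge\lfloor(1-1/\ell)n\rfloor$, while being at vertex-distance $\Omega(n)$ from $\gK_\ell$ (resp.\ failing to be $\ell$-partite), $\cF$ is not vertex-stable (resp.\ not degree-stable) with respect to $\gK_\ell$.

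For the sufficiency of part~\ref{it:13a}, pick a matching-critical witness $F_0\in\cF$ with matching $M_0\subseteq F_0$ satisfying $\chi(F_0-M_0)\le\ell$. Given an $\cF$-free graph $\cH$ on $n$ vertices with $|\cH|\ge(\pi(\cF)/2-\eps)n^2$, apply Theorem~\ref{THM:Erdos-Simonovits-stability} to produce an $\ell$-partition $V_1\sqcup\cdots\sqcup V_\ell$ of $V(\cH)$ whose bad-edge set $B$ (edges inside parts) satisfies $|B|\le\eps_1 n^2$. Delete the set $X$ of vertices either incident to more than $\sqrt{\eps_1}\,n$ bad edges or of total degree less than $(1-1/\ell-\sqrt{\eps_1})n$; double counting together with a standard minimum-degree deletion argument gives $|X|=O(\sqrt{\eps_1})\,n$. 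The central claim is that in the resulting graph $\cH'$, for every $i\in[\ell]$ the bad subgraph on $V_i\cap V(\cH')$ has matching number bounded by a constant $K=K(F_0)$: any larger in-part matching, combined with the near-complete $\ell$-partite cross-structure in $\cH'$, would let us embed $F_0$ by routing $M_0$ into the in-part matching and the $\ell$-colorable remainder $F_0-M_0$ across the other parts. Covering each bad subgraph by $\le 2K$ vertices and adjoining them to $X$ gives a deletion set of size $\le\delta n$ whose removal leaves an $\ell$-partite subgraph of $\cH$.

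For the sufficiency of part~\ref{it:13b}, note that edge-critical implies matching-critical (the critical edge is itself a one-edge matching), so part~\ref{it:13a} already supplies vertex-stability of $\cF$ with respect to $\gK_\ell$. By Theorem~\ref{PROP:vertex-stable-and-vertex-extendable-implies-degree-stability} it suffices to verify vertex-extendability: let $\cH$ be $\cF$-free with large minimum degree and $v\in V(\cH)$ with $\cH-v\in\gK_\ell$ on parts $V_1\sqcup\cdots\sqcup V_\ell$, and suppose toward contradiction that $v$ has a neighbor in every part. The minimum-degree hypothesis forces $v$ to have $\Omega(n)$ neighbors in each $V_i$ and $\cH-v$ to be almost-complete $\ell$-partite. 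Pick $F_0\in\cF$ edge-critical with critical edge $e_0=\{u_1,u_2\}$ and a proper $\ell$-coloring $c$ of $F_0-e_0$ with $c(u_1)=c(u_2)$; such $c$ exists, since otherwise every $\ell$-coloring of $F_0-e_0$ would remain proper after reinserting $e_0$, forcing $\chi(F_0)\le\ell$. We embed $F_0$ by $u_1\mapsto v$, $u_2\mapsto$ a neighbor of $v$ in $V_{c(u_1)}$, and the remaining vertices routed through parts via $c$; this rooted embedding goes through by standard density arguments and contradicts $\cF$-freeness.

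The main obstacle is the matching-number claim in the sufficiency of part~\ref{it:13a}: extracting a copy of the matching-critical $F_0$ from a near-complete $\ell$-partite graph augmented by one dense in-part matching. The key supporting fact is the structural lemma that every matching-critical $F_0$ with $\chi(F_0)=\ell+1$ admits a matching $M\subseteq F_0$ and a proper $\ell$-coloring $c$ of $F_0-M$ with $V(M)\subseteq c^{-1}(1)$; this concentrates all monochromatic $M$-edges into a single color, hence into a single part during embedding. Establishing this lemma likely proceeds by starting from a minimum matching witness and iteratively merging mono-$M$-edges of different colors via Kempe-chain swaps on the $(i,j)$-subgraphs of $F_0-M$. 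Once the lemma is in hand, the embedding of $F_0$ into the cleaned-up host $\cH'$ becomes a routine greedy argument exploiting the abundance of cross-edges.
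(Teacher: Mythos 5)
The main obstacle you single out is exactly the right place to worry, but the ``key supporting lemma'' you propose there is false, and with it the sufficiency argument for part~\ref{it:13a} collapses. Consider the triangular prism $F$: two triangles $abc$ and $a'b'c'$ joined by the perfect matching $aa',bb',cc'$. Here $\chi(F)=3$ and $F$ is matching-critical, since deleting $M=\{ab,a'b'\}$ leaves a bipartite graph with parts $\{a,b,c'\}$ and $\{c,a',b'\}$. However, $V(F)$ admits \emph{no} partition into an independent set and a set inducing a graph of maximum degree at most one: the independence number of $F$ is $2$, and the four vertices complementary to any independent pair induce a path with three edges. Consequently $F$ does not embed into a complete bipartite graph with a matching added inside one part, so no choice of the matching and no Kempe-chain recolouring can concentrate the monochromatic $M$-edges into a single colour class; matching-criticality only yields a proper $\ell$-colouring of $F-M$ in which \emph{every} class induces a sub-matching of $M$, and the prism shows these sub-matchings may be irreparably spread over several classes. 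This breaks the central embedding step of your cleanup argument (routing $M_0$ into an in-part matching of a single $V_i$), and in fact shows that no argument of this shape can work for the stated notion of matching-criticality: the graph $T(n,2)$ with a perfect matching inserted into one class is prism-free, has more than $(1/4-\eps)n^2$ edges, and stays non-bipartite after deleting any $n/5$ vertices. The hypothesis actually needed for the embedding is the strictly stronger one that some $F\in\cF$ with $\chi(F)=\ell+1$ admits a vertex partition into $\ell-1$ independent sets and one set inducing a matching (equivalently, its decomposition family contains a matching); for a single critical edge this coincides with edge-criticality, which is why no such issue arises in part~\ref{it:13b}. You should know that the paper's own proof of Lemma~\ref{lem:1748} asserts the very same embedding (``these edges play the r\^ole of $M$'') with no justification, so your instinct to isolate this step as a lemma was sound --- it just cannot be proved as stated.

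Apart from this, your plan tracks the paper closely: the necessity directions via Tur\'an graphs augmented inside one class and the stability-plus-cleanup scheme for~\ref{it:13a} are the paper's argument (do note that in the necessity step one must also check $\chi(F)=\ell+1$ for the embedded $F$, which follows because a graph inside Tur\'an-plus-matching-in-one-part is $(\ell+1)$-colourable). Your route to the sufficiency of~\ref{it:13b} is genuinely different from the paper's, which simply quotes the Erd\H{o}s--Simonovits valence theorem~\cite{ES73}: you instead combine part~\ref{it:13a} (applied to the critical edge, where the concentration issue is vacuous) with Theorem~\ref{PROP:vertex-stable-and-vertex-extendable-implies-degree-stability} and a direct verification of vertex-extendability. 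That derivation is viable and self-contained, but one intermediate claim needs repair: the minimum degree condition does not force $v$ to have $\Omega(n)$ neighbours in every part --- $v$ may have almost no neighbours in one part (the intended part for $v$); it does force this in all but at most one part, and if $v$ retains even a single neighbour in the exceptional part one embeds the critical edge there, which restores your contradiction.
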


\subsubsection{Cancellative hypergraphs and generalized triangles}

An $r$-graph $\cH$ is {\em cancellative} if $A\cup B = A\cup C$ implies that $B = C$ for 
all $A, B, C\in \cH$. Since $A\cup B = A\cup C$ is equivalent to $B \triangle C\subseteq A$,
an $r$-graph $\cH$ is cancellative if and only if it is $\cT_{r}$-free, where
$\cT_{r}$ denotes the family consisting of all $r$-graphs with three edges one of which 
contains the symmetric difference of the two other ones.

It was conjectured by Katona and proved by Bollob{\' a}s~\cite{BO74} that
the maximum number of edges in an $n$-vertex $\cT_3$-free $3$-graph is uniquely achieved by
the balanced complete $3$-partite $3$-graph. Keevash and the second author~\cite{KM04} 
proved that $\cT_3$ is edge-stable with respect to $\gK^3_3$, and 
the first author~\cite{LIU19} discovered another short proof of the edge-stability of~$\cT_3$
giving a linear dependency between the error parameters. Sidorenko~\cite{Sido87} proved that 
the maximum number of edges in an $n$-vertex $\cT_4$-free 
$4$-graph is uniquely achieved by the balanced complete $4$-partite $4$-graph.
Later, Pikhurko~\cite{PI08} proved that $\cT_4$ is vertex-stable with respect to $\gK^4_4$
using a sophisticated variation of  Zykov symmetrization. For $r\ge 5$ the value 
of~$\pi(\cT_{r})$ is unknown.

Cancellative hypergraphs are closely related to the Tur\'{a}n problem for generalized triangles. For $r\ge 2$ let $\Sigma_r$ be the collection of all $r$-graphs with three 
edges $A, B, C$ such that $|B\cap C| = r-1$ and $B\triangle C \subseteq A$. 
The unique $r$-graph $T_r\in \Sigma_r$ with $v(T_r)=2r-1$ is called the \emph{generalized 
triangle}. It is easy to see that $\Sigma_2 = \cT_2 = \{K_3\}$, $\Sigma_3 = \cT_3$, 
and $\Sigma_r\subsetneq \cT_r$ for~$r\ge 4$. 

The results on $\cT_4$ due to Sidorenko~\cite{Sido87} and Pikhurko~\cite{PI08} quoted 
earlier hold for $\Sigma_4$ instead of $\cT_4$ as well. In particular, $\Sigma_4$ is known to be 
vertex-stable with respect to $\gK^4_4$. For $r=5, 6$ Frankl and F\"{u}redi~\cite{FF89} 
proved that the extremal numbers $\ex(n, \Sigma_{r})$ are only realized by balanced blowups 
of the famous Witt designs~\cite{Witt} with parameters $(11,5,4)$ and $(12,6,5)$, respectively.
Norin and Yepremyan~\cite{NY17} proved that $\Sigma_{5}$ and $\Sigma_{6}$ are edge-stable
with respect to blowups of these Witt-designs, but Pikhurko showed~\cite{PI08} that they fail to 
be vertex-stable. For $r\ge 7$ it is an open problem to determine $\pi(\Sigma_r)$.

\begin{theorem}\label{THM:cancelltive-3-4-graphs-are-deg-stable}
	For $r\in\{3, 4\}$ the family $\Sigma_r$ is degree-stable with respect to $\gK^r_r$.
\end{theorem}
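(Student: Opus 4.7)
The plan is to apply Theorem~\ref{THM:Psi-trick:G-extendable-implies-degree-stability} with $\cF = \Sigma_r$ and $\gH = \gK^r_r$. The class $\gK^r_r$ is clearly hereditary. For blowup-invariance of $\Sigma_r$: if $\phi \colon F \to \cH$ is a homomorphism from some $F \in \Sigma_r$ with edges $A, B, C$ satisfying $|B \cap C| = r - 1$ and $B \triangle C = \{b, c\} \subseteq A$, then the injectivity of $\phi$ on $A$ forces $\phi(b) \neq \phi(c)$, and then $\phi(A), \phi(B), \phi(C)$ are three distinct edges of $\cH$ forming a $\Sigma_r$-configuration. So it remains to verify symmetrized-stability and vertex-extendability with respect to $\gK^r_r$.

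For symmetrized-stability, let $\cH$ be a symmetrized $\Sigma_r$-free $r$-graph with $|\cH| \geq (\pi(\Sigma_r)/r! - \epsilon)n^r$, and consider the reduced $r$-graph $\cH^*$ on the equivalence classes $V_1, \dots, V_k$. Distinct classes are non-equivalent, so by symmetrization they lie in a common edge of $\cH^*$; on the other hand, $\Sigma_r$-freeness of $\cH$ forbids two distinct edges of $\cH^*$ from sharing $r - 1$ vertices, since the two non-equivalent vertices in their symmetric difference would then lie in a covering edge and complete a $\Sigma_r$-configuration. For $r = 3$ this forces $\cH^*$ to be a Steiner triple system, and for $r = 4$ a $4$-graph covering every pair in which distinct edges share at most $2$ vertices. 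Writing $|\cH| = \sum_{E \in \cH^*} \prod_{i \in E} |V_i|$ and optimizing subject to $\sum |V_i| = n$, one checks that $|\cH|/n^r$ approaches $1/r^r$ only when $\cH^*$ has exactly $r$ (nonempty) classes, i.e., when $\cH^* = K^r_r$; hence $\cH$ is a blowup of $K^r_r$ for $\epsilon$ small enough.

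For vertex-extendability, fix a $\Sigma_r$-free $r$-graph $\cH$ on $n$ vertices with $\delta(\cH) \geq (\pi(\Sigma_r)/(r-1)! - \zeta)n^{r-1}$, and suppose $\cH - v$ is a subgraph of a blowup of $K^r_r$ with parts $V_1, \dots, V_r$. We wish to place $v$ in some $V_j$ so that $\cH$ itself becomes a subgraph of a blowup of $K^r_r$. It suffices to prove that the link $L_\cH(v)$ satisfies \emph{(I) transversality}---no edge of $L_\cH(v)$ has two vertices in a common part $V_i$---and \emph{(II) concentration}---all edges of $L_\cH(v)$ miss the same part. For (I): if some $A \in L_\cH(v)$ contained $u_1, u_2 \in V_i$, then $L_{\cH-v}(u_1), L_{\cH-v}(u_2)$ would each be dense $(r-1)$-partite $(r-1)$-graphs on $\bigcup_{j \neq i} V_j$; inclusion-exclusion gives a common edge $E$, and then $\{E \cup \{u_1\}, E \cup \{u_2\}, A \cup \{v\}\}$ is a $\Sigma_r$-configuration, a contradiction. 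For (II): if $L_\cH(v)$ had edges missing two different parts, a counting argument combining the lower bound on $|L_\cH(v)|$ with the $\Sigma_r$-free constraints on individual vertex links (for $r = 3$, each link of $\cH$ is a triangle-free graph; for $r = 4$, an analogous sparsity condition on the $3$-graph links holds) produces two link-edges $E, E' \in L_\cH(v)$ missing different parts with $|E \cap E'| = r - 2$. Then $E \cup \{v\}$ and $E' \cup \{v\}$ share $r - 1$ vertices, and the minimum-degree hypothesis supplies an edge of $\cH$ containing $E \triangle E'$, yielding another $\Sigma_r$-configuration. The main obstacle is step (II): locating the compatible pair $(E, E')$ requires a careful pigeonhole argument using both the large size of $L_\cH(v)$ and the iterated $\Sigma_r$-free structure on vertex links, and the case $r = 4$ is more elaborate than $r = 3$ because $L_\cH(v)$ is itself a $3$-graph, though the underlying mechanism applies throughout.
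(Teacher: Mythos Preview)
Your overall strategy---applying Theorem~\ref{THM:Psi-trick:G-extendable-implies-degree-stability} directly with $\gH=\gK^r_r$---is close to the paper's, which instead uses the larger class $\gT_r$ of blowups of $2$-covered $(n,r,r-1)$-systems and only passes to $\gK^r_r$ at the end via Lemma~\ref{LEMMA:cancelltive-3-4-graphs-symmetrized-stable-high-degree}. Your symmetrized-stability paragraph is in the right spirit but the sentence ``one checks that $|\cH|/n^r$ approaches $1/r^r$ only when $\cH^*=K^r_r$'' hides exactly the nontrivial Lagrangian computation that the paper carries out in Lemma~\ref{LEMMA:cancelltive-3-4-graphs-symmetrized-stable-high-degree}; this is fixable, but it is not a triviality you can leave to the reader.

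The real problem is step~(II) of your vertex-extendability argument. You claim that if $L_\cH(v)$ has transversal edges missing two different parts, then a pigeonhole argument using only the size of $L_\cH(v)$ and the $\Sigma_r$-freeness of vertex links produces $E,E'\in L_\cH(v)$ missing different parts with $|E\cap E'|=r-2$. This fails already for $r=3$. Take $|V_1|=|V_2|=|V_3|=n/3$, split $V_1=A_1\dcup B_1$ with $|A_1|=|B_1|=n/6$, and set $L_\cH(v)=(A_1\times V_2)\cup(B_1\times V_3)$. This link is transversal, triangle-free (indeed bipartite between $V_1$ and $V_2\cup V_3$), and has $n^2/9$ edges, yet any edge missing $V_3$ lies in $A_1\times V_2$ and any edge missing $V_2$ lies in $B_1\times V_3$; since $A_1\cap B_1=\varnothing$ and $V_2\cap V_3=\varnothing$, no two such edges share a vertex. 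So the pair $(E,E')$ you need does not exist, and your proposed $\Sigma_r$-configuration $\{A,\,E\cup\{v\},\,E'\cup\{v\}\}$ never materialises. The information you are using about $L_\cH(v)$ alone is simply not enough.

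The paper avoids this by \emph{not} insisting that both edges of the forbidden configuration pass through $v$. In Claim~\ref{CLAIM:v-has-no-neighbor-in-V1-cancellative} it takes any $u\in N_\cH(v)\cap V_1$ (where $V_1$ is the part missed by at least $d_\cH(v)/r$ link edges), finds a common $(r-1)$-set $e$ in the restricted links of $u$ and of $v$, and uses the pair $e\cup\{u\},\, e\cup\{v\}$ together with an edge $E\supseteq\{u,v\}$. In the configuration above this immediately yields a copy of $\Sigma_3$, whereas your approach does not. So to repair~(II) you must bring in an edge of $\cH-v$, not just two link edges of $v$.
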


\subsubsection{Hypergraph expansions}
Given an $r$-graph $F$ and $i\in [r-1]$ we write $\partial_i F$ for the $(r-i)$-graph
with the same vertex set as $F$ whose edges are the $(r-i)$-subsets of~$V(F)$ covered 
by an edge of~$F$. In particular, $\partial_{r-2} F$ is a graph on $V(F)$. 
A set $X\subseteq V(F)$ is called {\it $2$-covered in $F$} if it induces a clique 
in $\partial_{r-2} F$. If $V(F)$ itself is $2$-covered in $F$ we simply say that $F$
is {\it $2$-covered}. The {\it neighborhood} $N_F(v)$ of a vertex $v\in V(F)$ is defined 
to be the set of all $u\in V(F)\sm\{v\}$ with $\{u, v\}\in \partial_{r-2} F$. 

For an $r$-graph $F$ with $\ell$ vertices we define $\cK^F_\ell$ to be the set 
of all $r$-graphs of the 
form $F\cup\bigl\{S_{uv}\colon uv\in \binom{V(F)}2\sm \partial_{r-2}F\bigr\}$,
where for every pair of vertices $uv\in \binom{V(F)}2\sm\partial_{r-2}F$ not covered 
by an edge of $F$ the edge $S_{uv}$ contains $u$ and $v$. We write $H^F_\ell$ for the 
unique member of $\cK^F_\ell$ having the largest number of vertices, namely 
\[
	v(H^{F}_{\ell}) = \ell + (r-2)\left(\binom{\ell}{2}-|\partial_{r-2}F|\right).
\]
The $r$-graphs in $\cK^F_\ell$ are called \emph{weak expansions} of~$F$ 
while~$H^F_\ell$ is called the \emph{expansion} of~$F$. If~$F$ has no edges (and thus 
consists of~$\ell$ isolated vertices) we write~$\cK_{\ell}^{r}$ and~$H_{\ell}^{r}$ instead 
of~$\cK^F_\ell$ and~$H^F_\ell$.

The notion of hypergraph expansions was first introduced by the second author in~\cite{MU06} 
to extend Tur\'{a}n's theorem to hypergraphs.
In~\cite{MU06} it was proved that for every $n \ge \ell \ge r\ge 2$ the maximum number of edges
in an $n$-vertex $\cK_{\ell+1}^{r}$-free $r$-graph is uniquely achieved by~$T_{r}(n,\ell)$,
the balanced complete $\ell$-partite $r$-graph on~$n$ vertices.
In addition,~\cite{MU06} proved that~$\cK_{\ell+1}^{r}$ is edge-stable with respect 
to $\{T_r(n, \ell)\colon n\in\NN\}$.
Later de Oliveira Contiero, Hoppen, Lefmann, and Odermann~\cite{deLLO19}, and independently, 
the first author~\cite{LIU19} improved the edge-stability result by showing that a linear 
dependence between $\delta$ and $\epsilon$ is sufficient.
Pikhurko~\cite{PI13} refined~\cite{MU06} by showing that~$T_{r}(n,\ell)$ is also the unique  
$H_{\ell+1}^{r}$-free $r$-graph on~$n$ vertices with the maximum number of edges for sufficiently large~$n$.

Keevash~\cite{KE11} observed a generalization of these results to expansions of a large class 
of hypergraphs $F$. Let us write $\lambda(\cG)$ for the Lagrangian of a hypergraph $\cG$ 
(see Section~\ref{SEC:preliminary} for the definition) and set 
$\pi_{\lambda}(F) = \sup\left\{\hbox{$\lambda(\cG)\colon \cG$ is $F$-free} \right\}$
for every $r$-graph $F$.

\begin{theorem}[Keevash]\label{THM:Turan-density-weak-expansion-F}
Let $F$ be an $r$-graph with $v(F) = \ell+1$. If $\pi_{\lambda}(F) \le \binom{\ell}{r}/\ell^r$,
then
\[
	{\rm ex}(n,\cK_{\ell+1}^{F}) \le {\binom{\ell}{r}} n^r/{\ell^r} 
\]
holds for all positive integers $n$, and equality occurs whenever $n$ is divisible by $r$.
In particular,
\[
	\pi(H_{\ell+1}^{F}) = \pi(\cK_{\ell+1}^{F}) = {(\ell)_{r}}/{\ell^r}.
\]
\end{theorem}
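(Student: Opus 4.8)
The plan is to bound $\ex(n,\cK^{F}_{\ell+1})$ through the Lagrangian. Since putting weight~$1/n$ on every vertex shows $|\cH|\le\lambda(\cH)\,n^{r}$ for every $n$-vertex $r$-graph~$\cH$ (where $\lambda(\cH)$ is the maximum of $\sum_{E\in\cH}\prod_{v\in E}x_v$ over probability vectors~$x$), it suffices to prove that every $\cK^{F}_{\ell+1}$-free $r$-graph~$\cH$ satisfies $\lambda(\cH)\le\binom{\ell}{r}/\ell^{r}$ --- this is the natural target, since $\binom{\ell}{r}/\ell^{r}=\lambda(K^{r}_{\ell})$ and $K^{r}_{\ell}$, having only $\ell<v(F)$ vertices, is $F$-free, so the hypothesis $\pi_{\lambda}(F)\le\binom{\ell}{r}/\ell^{r}$ in fact holds with equality. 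The engine will be the elementary observation that \emph{if an $r$-graph~$\cG$ is $2$-covered and $\cK^{F}_{\ell+1}$-free, then $\cG$ is $F$-free}: given an embedding $\iota\colon V(F)\to V(\cG)$ of~$F$, the $2$-coveredness of~$\cG$ provides, for each pair $uv\in\binom{V(F)}{2}\sm\partial_{r-2}F$, an edge $E_{uv}\in\cG$ with $\iota(u),\iota(v)\in E_{uv}$; since $uv$ is not covered by~$F$, the edge $E_{uv}$ is not the image of an edge of~$F$, and therefore $\iota(F)\cup\{E_{uv}\colon uv\in\binom{V(F)}{2}\sm\partial_{r-2}F\}$ is a subgraph of~$\cG$ isomorphic to a member of~$\cK^{F}_{\ell+1}$ --- a contradiction.

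\textbf{The Lagrangian bound.}
To prove the bound on $\lambda(\cH)$ for $\cK^{F}_{\ell+1}$-free~$\cH$ (which we may assume has an edge), I would pick, among all probability vectors attaining $\lambda(\cH)$, one --- call it~$x$ --- whose support $S=\{v:x_{v}>0\}$ is inclusion-minimal, and then argue that $\cH[S]$ is $2$-covered. For if $u,v\in S$ lay in no common edge of~$\cH[S]$, then along the segment of probability vectors obtained from~$x$ by transferring weight between~$u$ and~$v$ the Lagrangian polynomial of~$\cH$ is affine (any edge through both~$u$ and~$v$ also meets $V(\cH)\sm S$ and contributes~$0$ throughout), hence constant since~$x$ is optimal and interior to the segment; the endpoint at which~$u$ or~$v$ gets weight~$0$ would then be optimal of strictly smaller support, contradicting minimality. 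Once $\cH[S]$ is known to be $2$-covered, it is $F$-free by the observation (being an induced subgraph of~$\cH$, it inherits $\cK^{F}_{\ell+1}$-freeness), and so $\lambda(\cH)=\lambda(\cH[S])\le\pi_{\lambda}(F)\le\binom{\ell}{r}/\ell^{r}$ because~$x$ is supported on~$S$. This gives $|\cH|\le\binom{\ell}{r}n^{r}/\ell^{r}$ for every~$n$.

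\textbf{Matching construction and $\pi(H^{F}_{\ell+1})$.}
For the lower bound I would use $T_{r}(n,\ell)$, the complete $\ell$-partite $r$-graph with parts as equal as possible; it is $\cK^{F}_{\ell+1}$-free because in any embedded copy of~$F$ two of the $\ell+1$ core vertices share a part (pigeonhole), and, all edges of $T_{r}(n,\ell)$ being transversals, that pair is not covered by the copy, so the copy does not extend to a weak expansion of~$F$. When $\ell\mid n$ this yields $e(T_{r}(n,\ell))=\binom{\ell}{r}n^{r}/\ell^{r}$, giving equality, and since $e(T_{r}(n,\ell))=(1-o(1))\binom{\ell}{r}n^{r}/\ell^{r}$ in general, $\pi(\cK^{F}_{\ell+1})=(\ell)_{r}/\ell^{r}$. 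Finally, $H^{F}_{\ell+1}\in\cK^{F}_{\ell+1}$ trivially gives $\pi(H^{F}_{\ell+1})\ge(\ell)_{r}/\ell^{r}$, and the reverse inequality I would obtain by a standard supersaturation-and-expansion argument: in an $n$-vertex $r$-graph with more than $(\pi(\cK^{F}_{\ell+1})+\eta)\binom{n}{r}$ edges and $n$ large, first repeatedly delete every edge through a pair lying in fewer than~$\epsilon n^{r-2}$ edges --- which destroys only $O(\epsilon n^{r})$ edges --- so that the remainder~$\cH''$ still has more than $\binom{\ell}{r}n^{r}/\ell^{r}$ edges while every pair it covers lies in at least~$\epsilon n^{r-2}$ edges; then $\cH''$ contains a weak expansion of~$F$, hence a copy of~$F$ whose uncovered pairs are all covered by~$\cH''$, and greedily choosing for each such pair an edge of~$\cH$ through it whose remaining $r-2$ vertices avoid the $O(1)$ vertices already used (possible since each such pair lies in $\ge\epsilon n^{r-2}\gg n^{r-3}$ edges) builds a copy of~$H^{F}_{\ell+1}$.

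\textbf{Main obstacle.}
I expect the only non-routine point to be the $2$-covered observation together with the idea of applying it to the minimal support of an optimal Lagrangian weighting: this is precisely what upgrades the hypothesis $\pi_{\lambda}(F)\le\binom{\ell}{r}/\ell^{r}$, which a priori constrains only $F$-free hypergraphs, to a statement about the far larger class of $\cK^{F}_{\ell+1}$-free hypergraphs. The remaining ingredients --- the inequality $|\cH|\le\lambda(\cH)n^{r}$, the edge-deletion cleaning, and the greedy expansion --- are routine, the one point needing care being the choice of~$\epsilon$ so that $\cH''$ stays dense enough to reapply the upper bound.
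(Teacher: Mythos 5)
Your proof is correct and follows the standard Lagrangian/symmetrization route of the cited source \cite{KE11} (the paper itself attributes this theorem to Keevash and reproduces no proof): the bound $|\cH|\le\lambda(\cH)n^r$, the observation that a $2$-covered $\cK^F_{\ell+1}$-free hypergraph is $F$-free, and the argument that an optimal Lagrangian weighting of inclusion-minimal support induces a $2$-covered subhypergraph together yield $\lambda(\cH)\le\pi_\lambda(F)\le\binom{\ell}{r}/\ell^r$, while $T_r(n,\ell)$ supplies the matching lower bound and the cleaning-plus-greedy step gives $\pi(H^F_{\ell+1})=\pi(\cK^F_{\ell+1})$. One small remark: your derivation correctly identifies that equality occurs when $\ell\mid n$ rather than $r\mid n$ as the theorem's statement reads, which appears to be a typo in the paper.
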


In the special case where $F$ has an isolated vertex and 
$\pi_{\lambda}(F) < \binom{\ell}{r}/\ell^r$ Brandt, Irwin, and Jiang~\cite{BIJ17}, and independently, Norin and Yepremyan~\cite{NY18}
proved a stability theorem for the family $\cK_{\ell+1}^{F}$ and used it to determine
${\rm ex}(n,H_{\ell}^{r})$ exactly for all sufficiently large integers $n$.
More specifically,~\cite{BIJ17} shows that
$\cK_{\ell+1}^{F}$ is vertex-stable, and~\cite{NY18}
shows that $\cK_{\ell+1}^{F}$ is edge-stable.
Our result below shows the stronger fact that $\cK_{\ell+1}^{F}$ is degree-stable.

Moreover, we prove degree-stability in many cases where $F$ has no isolated vertices
but is contained instead in the hypergraph $B(r, \ell+1)$ with vertex set $[\ell+1]$ 
and edge set 
\[
	\{[r]\}\cup\{E\subseteq [2, \ell+1]\colon |E|=r \text{ and } |[2, r]\cap E|\le 1\}.
\]

\begin{theorem}\label{thm:1837}
Let $\ell \ge r \ge 2$ and suppose that $F$ is an $r$-graph satisfying $v(F)=\ell+1$
and 
\begin{equation}\label{eq:1126}
	\sup\left\{\lambda(\cG)\colon 
	\text{$\cG$ is $F$-free and not $K^r_\ell$-colorable} \right\}< \binom{\ell}{r}/\ell^r.
\end{equation}
If either $F$ has an isolated vertex or $F\subseteq B(r, \ell+1)$, 
then the family $\cK_{\ell+1}^{F}$ is degree-stable with respect to $\gK^r_\ell$.
\end{theorem}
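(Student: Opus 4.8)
The plan is to apply our main result, Theorem~\ref{THM:Psi-trick:G-extendable-implies-degree-stability}, with $\cF = \cK_{\ell+1}^F$ and $\gH = \gK^r_\ell$. Three things need to be checked: that $\cK_{\ell+1}^F$ is blowup-invariant and nondegenerate, that $\gK^r_\ell$ is a hereditary class of $\cK_{\ell+1}^F$-free $r$-graphs, that $\cK_{\ell+1}^F$ is symmetrized-stable with respect to $\gK^r_\ell$, and that $\cK_{\ell+1}^F$ is vertex-extendable with respect to $\gK^r_\ell$. The first two are essentially formal: $\gK^r_\ell$ is closed under induced subgraphs by definition, every member of $\gK^r_\ell$ is $\cK_{\ell+1}^F$-free because a blowup of $K^r_\ell$ contains no $K^r_{\ell+1}$ and hence no weak expansion of any $F$ on $\ell+1$ vertices, blowup-invariance of $\cK_{\ell+1}^F$ follows from the fact that every pair of vertices of a weak expansion of $F$ is covered by an edge (so any single member, and hence the whole family, is blowup-invariant by the remark after Definition~\ref{DFN:blowup-invariant}), and nondegeneracy follows from the lower bound $\pi(\cK_{\ell+1}^F)=(\ell)_r/\ell^r>0$ given by the construction $T_r(n,\ell)$.

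The symmetrized-stability step is where hypothesis~\eqref{eq:1126} does its work. Let $\cH$ be a symmetrized $\cK_{\ell+1}^F$-free $r$-graph on $n$ vertices with $|\cH|\ge(\pi(\cF)/r!-\epsilon)n^r$. Being symmetrized, the equivalence classes of $\cH$ form a partition of $V(\cH)$, and the quotient $r$-graph $\cG$ obtained by collapsing each class to a point satisfies $\lambda(\cG)\ge|\cH|/n^r$ up to an error vanishing as $n\to\infty$; moreover $\cG$ is again $\cK_{\ell+1}^F$-free, and since $\cH$ is a blowup of $\cG$ and $\cK_{\ell+1}^F$ is blowup-invariant, $\cG$ is in fact $F$-hom-free, so in particular $F$-free. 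If $\cG$ were not $K^r_\ell$-colorable then \eqref{eq:1126} would force $\lambda(\cG)<(\ell)_r/\ell^r\cdot(1/r!)\cdot r! = \binom{\ell}{r}/\ell^r$, contradicting the density lower bound once $\epsilon$ is chosen small enough. Hence $\cG$, and therefore $\cH$, is $K^r_\ell$-colorable, i.e. a subgraph of a blowup of $K^r_\ell$, which is what symmetrized-stability demands. (One has to be a little careful about $F$-free versus $F$-hom-free and about the precise normalisation relating $|\cH|$, $\lambda(\cG)$, and the number of parts, but these are routine Lagrangian bookkeeping.)

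The main obstacle, and the one place where the two alternative hypotheses on $F$ genuinely matter, is vertex-extendability. Here we take a $\cK_{\ell+1}^F$-free $\cH$ on $n$ vertices with $\delta(\cH)\ge(\pi(\cF)/(r-1)!-\zeta)n^{r-1}$ and a vertex $v$ such that $\cH-v$ embeds into a blowup of $K^r_\ell$; we must show $\cH$ itself does. Thus $V(\cH-v)$ is partitioned into classes $V_1,\dots,V_\ell$ with all edges of $\cH-v$ transversal, each $|V_i|=(1/\ell+o(1))n$ forced by the minimum-degree hypothesis, and we only need to argue that $v$ can be placed into one of these classes — equivalently, that $L_\cH(v)$ contains no edge within a single class and more generally no edge that, together with the transversal structure on $V(\cH)\sm\{v\}$, would create a copy of some weak expansion of $F$. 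Suppose not: then the link of $v$ contains a "bad" $(r-1)$-set, and using the near-equal sizes of the $V_i$ together with the large link of $v$ one greedily builds, among the remaining $\approx n$ vertices spread across all $\ell$ classes, a copy of $K^r_\ell$ on representatives together with $v$, or more precisely a copy of a weak expansion $H\in\cK_{\ell+1}^F$: the $\ell$ class-representatives plus $v$ play the role of $V(F)$, the pair-covering edges $S_{uv}$ are found one at a time inside the link of $v$ or inside $\cH-v$ using that every relevant co-degree is $\Omega(n^{r-2})$. This is exactly the point at which "$F$ has an isolated vertex" or "$F\subseteq B(r,\ell+1)$" is used: the isolated-vertex case lets $v$ itself be the isolated vertex of $F$, so no edge through $v$ inside $F$ need be found and only the $S_{uv}$'s matter; the $B(r,\ell+1)$ case restricts which edges of $F$ pass through the designated first vertex so that at most one "$[2,r]$-type" coordinate is involved, keeping the greedy embedding feasible. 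In either case the resulting copy of $H$ contradicts $\cK_{\ell+1}^F$-freeness, so $v$ has a legal class and $\cH\in\gK^r_\ell$. With all hypotheses of Theorem~\ref{THM:Psi-trick:G-extendable-implies-degree-stability} verified, degree-stability of $\cK_{\ell+1}^F$ with respect to $\gK^r_\ell$ follows. \qed
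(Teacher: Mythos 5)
Your proposal follows the paper's proof line-for-line: reduce to Theorem~\ref{THM:Psi-trick:G-extendable-implies-degree-stability}, establish symmetrized-stability by passing to the $2$-covered quotient and invoking hypothesis~\eqref{eq:1126}, and check vertex-extendability via a greedy embedding argument that exploits the fact that $F$ is not $2$-covered. This is exactly Lemmas~\ref{LEMMA:weak-expansion-family-is-sym-stable} and~\ref{LEMMA:weak-expansions-F-is-vtx-extendable} in Subsection~\ref{SUBSEC:hypergraph-expansions-part-one}.

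One justification you give is actually false, though the fact you are justifying is true. You claim blowup-invariance of $\cK^F_{\ell+1}$ on the grounds that ``every pair of vertices of a weak expansion of $F$ is covered by an edge.'' This is not so: if $w\in S_{uv}\setminus\{u,v\}$ and $w'\in S_{u'v'}\setminus\{u',v'\}$ are two of the added pad vertices belonging to \emph{different} covering edges, the pair $w w'$ need not lie in any edge of the weak expansion, so weak expansions are in general not $2$-covered and the remark after Definition~\ref{DFN:blowup-invariant} does not apply to them. The correct (and one-line) argument, which the paper uses, is that $\cK^F_{\ell+1}$ is closed under taking homomorphic images, and every such family is blowup-invariant. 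In the symmetrized-stability step your route via ``$\cG$ is $F$-hom-free'' also leans on this incorrect premise; what one actually uses is that $\cG$ is a subgraph of $\cH$ (hence $\cK^F_{\ell+1}$-free) and that $\cG$ is $2$-covered, so any copy of $F$ in $\cG$ extends to a weak expansion inside $\cG$, forcing $\cG$ to be $F$-free. Apart from these points and the expected vagueness in the greedy-embedding sketch for vertex-extendability (where the paper's Claims~\ref{CLAIM:link-v-is-empty-inside-Vi} and~\ref{CLAIM:neighbor-v-is-empty-in-some-Vi} give the precise structure), the proposal is sound and matches the paper's strategy.
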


There are several natural examples of hypergraphs $F$ which have been proved to satisfy 
condition~\eqref{eq:1126} in the literature but whose families of weak 
expansions~$\cK^F_{\ell+1}$
were not known to be degree-stable before. For instance, Hefetz and Keevash~\cite{HK13} 
studied the case
that $F=M^3_2$ is a $3$-uniform matching with two edges and six vertices.  
More generally Jiang, Peng, and Wu~\cite{JPW18} proved the assumption 
if $F=\{M^3_t, L^3_t, L^4_t\}$ holds for some $t\ge 2$; here~$M^3_t$ denotes the $3$-uniform 
matching of size $t$ and for $r\ge 2$ the $r$-graph~$L^r_t$ consists of~$t$ edges having 
one vertex~$v$ in common such that $E\cap E'=\{v\}$ holds for all distinct $E, E'\in L^r_t$.
Brandt, Irwin, and Jiang~\cite{BIJ17} proved that in these cases the families $\cK^{F}$ are 
vertex-stable. By combining the results in~\cite{JPW18} on Lagrangians 
with Theorem~\ref{thm:1837} one immediately obtains the 
following strengthening of this fact. 

\begin{cor}
	For $t\ge 2$ the families $\cK^{M^3_t}_{3t}$, $\cK^{L^3_t}_{2t+1}$, $\cK^{L^4_t}_{3t+1}$ 
	are degree-stable with respect to $\gK^3_{3t-1}$, $\gK^3_{2t}$, $\gK^4_{3t}$, 
	respectively. \hfill $\Box$
\end{cor}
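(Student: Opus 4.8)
The plan is to deduce each of the three assertions directly from Theorem~\ref{thm:1837}, applied with $F$ equal to $M^3_t$, $L^3_t$, and $L^4_t$ and with $(r,\ell)$ equal to $(3,3t-1)$, $(3,2t)$, and $(4,3t)$, respectively. In each case $v(F)=\ell+1$, and the inequality $\ell\ge r\ge 2$ holds exactly because $t\ge 2$ (indeed $3t-1\ge 3$, $2t\ge 3$, and $3t\ge 4$ each force $t\ge 2$). Once the two hypotheses of Theorem~\ref{thm:1837} are checked its conclusion reads verbatim as the three claimed statements, because $\cK^F_{\ell+1}$ becomes $\cK^{M^3_t}_{3t}$, $\cK^{L^3_t}_{2t+1}$, $\cK^{L^4_t}_{3t+1}$ while $\gK^r_\ell$ becomes $\gK^3_{3t-1}$, $\gK^3_{2t}$, $\gK^4_{3t}$.

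The first hypothesis of Theorem~\ref{thm:1837} is the alternative that $F$ has an isolated vertex or $F\subseteq B(r,\ell+1)$. None of $M^3_t$, $L^3_t$, $L^4_t$ has an isolated vertex, so one needs the containment in $B(r,\ell+1)$, which I would prove by an explicit embedding that places the edges greedily on fresh vertices of $[2,\ell+1]$: for $M^3_t$ take the matching $\{1,2,3\},\{4,5,6\},\dots,\{3t-2,3t-1,3t\}$; for $L^3_t$ take the sunflower with core $\{2\}$ and petals $\{1,2,3\},\{2,4,5\},\dots,\{2,2t,2t+1\}$; for $L^4_t$ take the core $\{2\}$ with petals $\{1,2,3,4\},\{2,5,6,7\},\dots,\{2,3t-1,3t,3t+1\}$. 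In every case the first edge is $[r]$ and each of the remaining ones is an $r$-subset of $[2,\ell+1]$ meeting $[2,r]$ in at most one vertex, hence an edge of $B(r,\ell+1)$; moreover distinct edges of the construction meet only in the (possibly empty) core, so one really has embedded the given hypergraph.

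The second hypothesis is condition~\eqref{eq:1126}, i.e.\ that every $F$-free $r$-graph which is not $K^r_\ell$-colorable has Lagrangian strictly below $\binom{\ell}{r}/\ell^r$; for each of $M^3_t$, $L^3_t$, $L^4_t$ this is exactly the Lagrangian-density estimate established by Jiang, Peng, and Wu~\cite{JPW18}. Once both hypotheses are in place, Theorem~\ref{thm:1837} will deliver the corollary. There is no real obstacle here: the analytic content has been offloaded to~\cite{JPW18} and the stability machinery to Theorem~\ref{thm:1837}, so the only thing that needs care is lining up the abstract parameters $(r,\ell)$ with the three concrete families and confirming the small embeddings into $B(r,\ell+1)$.
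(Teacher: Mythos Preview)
Your proposal is correct and follows exactly the approach the paper indicates: the corollary is stated with a bare $\Box$, the preceding sentence being ``By combining the results in~\cite{JPW18} on Lagrangians with Theorem~\ref{thm:1837} one immediately obtains the following strengthening of this fact.'' You have supplied precisely those details, including the explicit embeddings of $M^3_t$, $L^3_t$, $L^4_t$ into $B(r,\ell+1)$, which the paper leaves to the reader.
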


\subsubsection{Expansions of matchings of size 2}

For $r\ge 3$ let $M^r_2$ be the $r$-graph on $2r$ vertices consisting of two disjoint edges. 
The trivial observation that no $r$-graph in $\gK^r_{2r-1}$ contains a weak expansion 
of $M^r_2$ yields the lower bound $\pi(\cK^{M^r_2}_{2r})\ge (2r-1)_r/(2r-1)^r$. 
In their work~\cite{HK13} establishing equality for $r=3$ Hefetz and Keevash also observed 
that for $r\ge 4$ there is a denser construction of $\cK^{M^r_2}_{2r}$-free $r$-graphs. 

Call an $r$-graph $\cH$ \emph{semibipartite} if there exists a
partition $V(\cH)=A\dcup B$ such that $|A\cap E|=1$ holds for every $E\in \cH$. 
If $\cH$ contains all $|A|\binom{|B|}{r-1}$ such edges we say that $\cH$ is a 
\emph{complete semibipartite} $r$-graph. It is easy 
to see that semibipartite $r$-graphs cannot contain weak expansions of $M^r_2$ and that 
$(1-1/r)^{r-1}$ is the supremum of the edge densities of semibipartite $r$-graphs. 
A straightforward calculation shows that for $r\ge 4$ this number is indeed larger than the 
lower bound $(2r-1)_r/(2r-1)^r$ mentioned before. In fact Hefetz and Keevash~\cite{HK13} 
conjectured $\pi(\cK^{M^r_2}_{2r})=(1-1/r)^{r-1}$ for every $r\ge 4$. 
This was proved by Bene Watts, Norin, and Yepremyan~\cite{BNY19} 
who also showed that $\cK^{M^r_2}_{2r}$ is edge-stable with respect to the class 
$\gS^r$ of all complete semibipartite $r$-graphs. Combining a substantial result on 
Lagrangians from their work with our 
Theorem~\ref{THM:Psi-trick:G-extendable-implies-degree-stability} we strengthen this 
to degree-stability. 

\begin{theorem}\label{THM:degree-stable-of-hypergraph-expansion-two}
For every $r\ge 4$ the family of weak expansions of $M^r_2$ is degree-stable with respect 
to $\gS^r$.
\end{theorem}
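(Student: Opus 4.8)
The plan is to apply the Main Result (Theorem~\ref{THM:Psi-trick:G-extendable-implies-degree-stability}) with $\cF = \cK^{M^r_2}_{2r}$, the family of weak expansions of $M^r_2$, and $\gH = \gS^r$, the class of all complete semibipartite $r$-graphs. To do this I must verify the three hypotheses: that $\gS^r$ is a hereditary class of $\cF$-free $r$-graphs, that $\cF$ is blowup-invariant and nondegenerate, that $\cF$ is symmetrized-stable with respect to $\gS^r$, and that $\cF$ is vertex-extendable with respect to $\gS^r$. The nondegeneracy and the value $\pi(\cF) = (1-1/r)^{r-1}$ are already known from Bene Watts--Norin--Yepremyan~\cite{BNY19}. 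That $\gS^r$ consists of $\cF$-free $r$-graphs is the easy observation quoted in the excerpt (semibipartite $r$-graphs contain no weak expansion of $M^r_2$, since any edge meeting $A$ in exactly one vertex forces the two "core" edges of a copy of $M^r_2$ to lie in $B$, which is impossible as they are disjoint $r$-sets each needing an $A$-vertex). Hereditariness of $\gS^r$ under induced subgraphs is immediate: restricting a complete semibipartite $r$-graph to a vertex subset $S$ gives the complete semibipartite $r$-graph with parts $A\cap S$ and $B\cap S$. Blowup-invariance of $\cF$ should follow from the fact that in each member of $\cF$ every pair of vertices is $2$-covered --- indeed $H^{M^r_2}_{2r}$ and the other weak expansions are built precisely so that every non-edge pair of $M^r_2$ gets its own covering edge $S_{uv}$, and the two original edges are themselves cliques in $\partial_{r-2}$; by the remark after Definition~\ref{DFN:blowup-invariant}, being $2$-covered makes a single hypergraph's family blowup-invariant, and here $\cF$ is a finite union of such, which one checks is still blowup-invariant.

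For symmetrized-stability, the key input is the "substantial result on Lagrangians" of~\cite{BNY19}, which I will use in the form: there is $\eta>0$ such that every $\cF$-free $r$-graph $\cG$ that is \emph{not} a subgraph of a complete semibipartite $r$-graph satisfies $\lambda(\cG) \le (1-1/r)^{r-1} - \eta$. Given this, let $\cH$ be a symmetrized $\cF$-free $r$-graph on $n\ge N_0$ vertices with $|\cH| \ge (\pi(\cF)/r! - \epsilon)n^r$. Since $\cH$ is symmetrized, Theorem~\ref{THM:Zykov-symmetrization-Turan-number} (and standard Lagrangian facts from Section~\ref{SEC:preliminary}) tell us that the edge density of $\cH$ is, up to lower-order terms, $r!\,\lambda(\cH)$; concretely $|\cH| \le \lambda(\cH)\,n^r$ for a symmetrized $\cH$ since symmetrized hypergraphs realize their Lagrangian at the uniform weighting on the vertex set. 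Hence $\lambda(\cH) \ge \pi(\cF)/r! - \epsilon = (1-1/r)^{r-1} - \epsilon$ if we normalize correctly (the paper's convention writes $\pi(\cF)/r!$ for the Lagrangian-scale density). Choosing $\epsilon < \eta$, the Lagrangian dichotomy forces $\cH$ to be a subgraph of a complete semibipartite $r$-graph, i.e. a member of $\gS^r$. This is exactly symmetrized-stability.

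The heart of the matter, and the step I expect to be the main obstacle, is vertex-extendability: I must find $\zeta>0$ and $N_0$ so that whenever $\cH$ is $\cF$-free on $n\ge N_0$ vertices with $\delta(\cH) \ge \bigl(\pi(\cF)/(r-1)! - \zeta\bigr)n^{r-1}$ and $\cH - v$ is a subgraph of a complete semibipartite $r$-graph with parts $A\dcup B$ (so $V(\cH)\setminus\{v\} = A\cup B$), then $\cH$ itself embeds into a complete semibipartite $r$-graph. The natural attempt is to show $v$ can be placed into $B$: one must check that every edge of $\cH$ through $v$ meets $\{v\}\cup B$ in exactly one vertex outside $B$, i.e. has the form $\{a\}\cup(\text{$(r-1)$-subset of }B\cup\{v\})$ with $a\in A$ --- equivalently no edge through $v$ contains two vertices of $A$, and no edge through $v$ avoids $A$ entirely. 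Suppose some edge $E\ni v$ contains $a_1,a_2\in A$. Because $\delta(\cH)$ is large, $a_1$ and $a_2$ each lie in many edges of $\cH-v$, all of which (being in $\gS^r$ restricted to $A\cup B$) are "$A$-to-$B^{r-1}$" edges; by a supersaturation/counting argument one finds two disjoint such edges $E_1 \ni a_1$, $E_2\ni a_2$ with $E_1,E_2\subseteq (A\cup B)\setminus E$ --- here the large minimum degree is what guarantees enough room. Then $E, E_1, E_2$ contain a weak expansion of $M^r_2$ (the two disjoint "core" edges being, say, appropriate $r$-subsets built from $E\cap A$-ish data --- more carefully, $E_1$ and $E_2$ are the disjoint pair, and the pairs of vertices of $\{a_1,a_2\}$ forced by $E$ supply the expansion edge), contradicting $\cF$-freeness. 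A similar but slightly more delicate argument rules out an edge $E\ni v$ with $E\setminus\{v\}\subseteq B$: such an $E$ together with two disjoint $A$-to-$B^{r-1}$ edges avoiding $E$ again yields a forbidden configuration. The bookkeeping --- exactly which $r$-subsets play the roles of the two disjoint edges and the expansion edge of a weak expansion of $M^r_2$, and confirming disjointness can be arranged using $\delta(\cH)\ge(1-1/r-\zeta)n^{r-1}$ roughly --- is the real work; everything else is routine once the Lagrangian result of~\cite{BNY19} is in hand. Once vertex-extendability and symmetrized-stability are established, Theorem~\ref{THM:Psi-trick:G-extendable-implies-degree-stability} delivers degree-stability of $\cF$ with respect to $\gS^r$, completing the proof.
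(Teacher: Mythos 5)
Your high-level plan matches the paper's proof exactly: apply Theorem~\ref{THM:Psi-trick:G-extendable-implies-degree-stability} with $\cF = \cK^{M^r_2}_{2r}$ and $\gH = \gS^r$, derive symmetrized-stability from the Lagrangian dichotomy of Bene Watts--Norin--Yepremyan, and then establish vertex-extendability. Your discussion of blowup-invariance, hereditariness, and symmetrized-stability contains some imprecisions (for instance, weak expansions of $M^r_2$ are \emph{not} $2$-covered in general, since the auxiliary vertices introduced by the edges $S_{uv}$ need not be pairwise covered --- the actual reason is that any homomorphism of a weak expansion must be injective on the $2r$ core vertices of $M^r_2$, because those are all pairwise covered; and you have the normalization of $\pi(\cF)$ off by a factor of $r!$ relative to the Lagrangian), but these are fixable and close to the paper's reasoning.

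The genuine gap is in the vertex-extendability argument, which is also where the real work lives. You propose to show that $v$ can \emph{always} be placed in $B$ by ruling out two kinds of bad edges through $v$: those with two or more $A$-vertices, and those entirely contained in $B\cup\{v\}$. The first is fine and essentially what the paper's Claims~\ref{clm:1253}--\ref{CLAIM:link-v-is-empty-inside-Vi-expansion-two-star} do. But the second cannot be ruled out, and in fact your proposed contradiction does not exist: if $\cH$ is itself a complete semibipartite graph with $v$ on the $A$-side, then every edge through $v$ lies in $B\cup\{v\}$, $\cH$ is $\cK^{M^r_2}_{2r}$-free, and $\delta(\cH)$ is near the threshold. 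Concretely, when you take $E=\{v\}\cup B'$, $E_1=\{a_1\}\cup B_1$, $E_2=\{a_2\}\cup B_2$ disjoint, the pair $\{a_1,a_2\}$ has no covering edge (no semibipartite edge meets $A$ twice), so $E\cup E_1\cup E_2$ never contains a weak expansion of $M^r_2$; and if you instead try $\{E,E_1\}$ as the disjoint core pair, you would need an edge through $\{v,a_1\}$, which may not exist. What the paper's Lemma~\ref{LEMMA:2-matching-vertex-extendable-star} does instead is a dichotomy: if no edge through $v$ lies in $B\cup\{v\}$, place $v$ in $B$; otherwise fix such an edge $E_\star$, use the common neighborhood $X=\bigcap_{w\in E_\star}N_\cH(w)$ together with the observation that no edge $E'\subseteq X\setminus E_\star$ can exist (else $\{E_\star,E'\}$ is a disjoint $2$-covered pair) to show $|N_\cH(v)\cap V_1|$ is tiny, and then upgrade ``tiny'' to ``empty'' via Claim~\ref{clm:1253} so that $v$ can be placed in $A$. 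Without this second branch your argument fails on the central case.
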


We would like to point out that a different abstract framework for stability results based 
on Zykov's symmetrization method has recently been worked out by Liu, Pikhurko, Sharifzadeh, 
and Staden~\cite{LPSS}.

\subsection*{Organization}
In Section~\ref{SEC:preliminary} we introduce some definitions and useful lemmas.
The results presented in Subsection~\ref{SUBSEC:main-result} are proved in
Section~\ref{SEC:psi-trick-proof} and  Section~\ref{SEC:applications} deals with the 
applications described in Subsection~\ref{subsec:0054}. The last section consists
of concluding remarks. 

\section{Preliminaries}\label{SEC:preliminary}

We begin with some definitions related to the Lagrangians of hypergraphs (introduced
by Frankl-R\"odl~\cite{FR84}).
Given an $r$-graph $\cG$ on $m$ vertices
(let us assume for notational transparency that $V(\cG) = [m]$) the multilinear function
$L_{\cG}\colon \RR^m \to \RR$ is defined by
\begin{align}
L_{\cG}(x_1,\ldots,x_m) = \sum_{E\in \cH}\prod_{i\in E}x_i,
\quad \text{ for all } (x_1,\ldots,x_m) \in \RR^m. \notag
\end{align}
Denote by $\Delta_{m-1}$ the standard $(m-1)$-dimensional simplex,
i.e.
\begin{align}
\Delta_{m-1} = \left\{(x_1,\ldots,x_m)\in [0,1]^n\colon x_1+\cdots+x_m= 1\right\}. \notag
\end{align}
Since $\Delta_{m-1}$ is compact,
a theorem of Weierstra\ss\ implies that the restriction of $L_{\cG}$ to $\Delta_{m-1}$
attains a maximum value, called the {\em Lagrangian} of $\cG$ and denoted by $\lambda(\cG)$.

Lagrangians arise naturally when one considers the maximum possible densities 
of blowups.
Given an $r$-graph $\cG$ with vertex set $[m]$ and mutually disjoint sets 
$V_1, \dots, V_m$ we write $\cG[V_1, \dots, V_m]$ for the $r$-graph obtained
from $\cG$ upon replacing every vertex $i\in [m]$ by the set $V_i$ and every 
edge $\{i(1), \dots, i(r)\}\in\cG$ by the complete $r$-partite $r$-graph with 
vertex classes $V_{i(1)}, \dots, V_{i(r)}$. Obviously we 
have 
\begin{equation}\label{eq:1226}
	|\cG[V_1, \dots, V_m]|=L_\cG(|V_1|, \dots, |V_m|)
\end{equation} 
in this situation. 
If $\cH$ is a spanning subgraph 
of $\cG[V_1, \dots, V_m]$ we say that the partition $V(\cH)=\bigdcup_{i\in [m]}V_i$
is a \emph{$\cG$-coloring} of $\cH$. Observe that an $r$-graph $\cH$ admits
a $\cG$-coloring if and only if it is $\cG$-colorable. Since $L_\cG$ is a homogeneous 
polynomial of degree~$r$, the formula~\eqref{eq:1226} immediately implies the following 
observation (e.g. see \cites{FF89, KE11}).

\begin{lemma}\label{LEMMA:blowup-langrangian}
Let $\cG$ be an $r$-graph on $m$ vertices. If $\cH$ denotes an 
$r$-graph on $n$ vertices possessing a $\cG$-coloring $V(\cH) = \bigdcup_{i\in[m]}V_i$ 
and $x_i = |V_i|/n$ for every $i\in[m]$, then 
\[
\pushQED{\qed} 
|\cH| 
\le 
L_\cG(x_1,\ldots,x_m) n^r 
\le 
\lambda(\cG)n^r. \qedhere
\popQED
\]
\end{lemma}

The next result, concerning the Lagrangian of the complete $r$-graph $K_{m}^{r}$, 
will be useful in
proofs of stability theorems whose extremal configuration is a blowup of $K_{m}^{r}$.

\begin{lemma}\label{LEMMA:Lagrangian-complete-r-graph}
If $m \ge r \ge 2$ and $(x_1,\ldots,x_{m})\in \Delta_{m-1}$,
then
\begin{align}
L_{K_{m}^{r}}(x_1,\ldots,x_{m})
+ \frac{\binom{m}{r}}{m^{r-1}(m-1)}\sum_{i\in [m]}\left(x_i-\frac{1}{m}\right)^2
\le \frac{1}{m^{r}}\binom{m}{r}.\notag
\end{align}
\end{lemma}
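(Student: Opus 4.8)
The plan is to prove the inequality by a direct computation that exploits the symmetry of $L_{K^r_m}$ together with the constraint $\sum_i x_i = 1$. Write $s_k = e_k(x_1,\dots,x_m)$ for the $k$-th elementary symmetric polynomial, so that $L_{K^r_m}(x) = s_r$ and $s_1 = 1$. The quadratic deviation term can be rewritten as $\sum_{i}(x_i - 1/m)^2 = \sum_i x_i^2 - 1/m = s_1^2 - 2 s_2 - 1/m = (1 - 1/m) - 2 s_2$. Hence the left-hand side of the claimed inequality is $s_r + \frac{\binom mr}{m^{r-1}(m-1)}\bigl((1 - 1/m) - 2 s_2\bigr)$, and since $\frac{\binom mr}{m^{r-1}(m-1)}\cdot(1 - 1/m) = \frac{\binom mr}{m^r}$, the statement reduces to the clean inequality
\begin{equation}\label{eq:reduced}
	s_r \le \frac{2\binom mr}{m^{r-1}(m-1)}\, s_2.
\end{equation}
So it suffices to bound $s_r$ from above by a constant multiple of $s_2$ on the simplex.

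For \eqref{eq:reduced} I would use Maclaurin-type inequalities for the normalized symmetric means. Set $p_k = s_k / \binom mk$. Maclaurin's inequality gives $p_1 \ge p_2^{1/2} \ge \dots \ge p_r^{1/r} \ge \dots$, and here $p_1 = 1/m$. From $p_r^{1/r} \le p_2^{1/2}$ we get $p_r \le p_2^{r/2}$, but I actually want a bound that is \emph{linear} in $p_2$; the right tool is the pair of inequalities $p_r \le p_2 \, p_1^{r-2}$, which follows by combining Maclaurin's inequality with $p_1 \ge p_2^{1/2}$ (indeed $p_r^{1/r} \le p_2^{1/2} = p_2^{1/2}$ and $p_2^{1/2} \le p_1$, interpolating gives $p_r \le p_2 p_1^{r-2}$; more carefully one uses Newton's inequalities $p_{k-1}p_{k+1}\le p_k^2$ telescoped from $k=2$ to get $p_r p_1^{r-2} \le p_2^{r-1}$, hence $p_r \le p_2^{r-1}/p_1^{r-2} \le p_2 \cdot (p_2/p_1^2)^{r-2}\cdot p_1^{0}$, and since $p_2 \le p_1^2$ this yields $p_r \le p_2 p_1^{r-2}$). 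Substituting $p_1 = 1/m$ gives $s_r = \binom mr p_r \le \binom mr p_2 m^{-(r-2)} = \frac{\binom mr}{m^{r-2}\binom m2} s_2 = \frac{2\binom mr}{m^{r-1}(m-1)} s_2$, which is exactly \eqref{eq:reduced}.

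The step I expect to be the main obstacle is justifying the linear-in-$p_2$ bound $p_r \le p_2\, p_1^{r-2}$ cleanly, since Newton's and Maclaurin's inequalities are classically stated for real $x_i$, whereas here all $x_i \in [0,1]$ are nonnegative — this is fine, as Newton's inequalities $p_{k-1}p_{k+1} \le p_k^2$ hold for all real tuples, in particular nonnegative ones, and Maclaurin's $p_1 \ge p_2^{1/2} \ge \dots$ holds for nonnegative tuples. One subtlety: if some $p_k = 0$ the telescoping needs the usual care (but then $p_r = 0$ and the inequality is trivial), and one should note $p_2$ could be $0$ only when at most one $x_i$ is nonzero, in which case $s_r = 0$ too for $m \ge 2$, so the inequality holds. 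An alternative, fully self-contained route avoiding named inequalities is to argue by a smoothing/compactness argument: the function $L_{K^r_m}(x) + c\sum_i(x_i - 1/m)^2$ with $c = \frac{\binom mr}{m^{r-1}(m-1)}$ attains its maximum on $\Delta_{m-1}$, and at an interior critical point all partial derivatives of $L_{K^r_m}$ are equal, which forces the nonzero coordinates to coincide; checking the maximum over such "balanced on a subset" points (where $k$ coordinates equal $1/k$ and the rest vanish) reduces to verifying $\binom kr / k^r + c(1 - k/m^2 \cdot \text{something})\dots \le \binom mr/m^r$ for each $k \le m$, a one-variable inequality in $k$ that can be checked by comparing consecutive values of $k$. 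I would present the Maclaurin/Newton computation as the main proof and mention the smoothing argument as a remark, since the former is shorter. Equality holds exactly when $x_1 = \dots = x_m = 1/m$.
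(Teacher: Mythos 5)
Your proof is correct and follows essentially the same route as the paper: rewrite $\sum_i(x_i-1/m)^2$ in terms of $s_2=\mu_2$ to reduce the claim to $s_r\le\frac{2\binom mr}{m^{r-1}(m-1)}s_2$, then obtain $p_r\le p_2p_1^{r-2}$ from Maclaurin's inequality (the paper phrases this as $\mu_2/\binom m2\ge m^{r-2}\mu_r/\binom mr$ via $p_2\ge p_r^{2/r}$ and $m^rp_r\le 1$, which is the same two applications of Maclaurin you use). The only blemish is a small slip in your parenthetical Newton-telescoping variant, where $p_2\cdot(p_2/p_1^2)^{r-2}\cdot p_1^{0}$ should read $p_2\cdot(p_2/p_1^2)^{r-2}\cdot p_1^{r-2}$; your primary Maclaurin argument is clean.
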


Clearly this holds with equality if $(x_1, \dots, x_m)$ is either $(1/m, \dots, 1/m)$
or a unit vector. 

\begin{proof}[Proof of Lemma~\ref{LEMMA:Lagrangian-complete-r-graph}]
Setting $\mu_i = L_{K_{m}^{i}}(x_1,\ldots,x_m)$ for every $i\in [m]$,
we have $\mu_1=1$ 
and Maclaurin's inequality implies 
\begin{align}
\left(\frac{\mu_r}{\binom{m}{r}}\right)^{1/r}
\le \cdots \le
\left(\frac{\mu_2}{\binom{m}{2}}\right)^{1/2}
\le
\frac{\mu_1}{\binom{m}{1}}
=
\frac 1m. \notag
\end{align}
Consequently,
\begin{align}
\frac{\mu_2}{\binom{m}{2}}
\ge
\left(\frac{\mu_r}{\binom{m}{r}}\right)^{2/r}
\ge
\left(\frac{\mu_r}{\binom{m}{r}}\right)^{2/r} \cdot \left(m^r \frac{\mu_r}{\binom{m}{r}}\right)^{(r-2)/r},  \notag
\end{align}
i.e.,
\[
\mu_r \le \frac{\binom{m}{r}}{m^{r-2}}\cdot \frac{\mu_2}{\binom{m}{2}}.
\]
Since
\[
	2\mu_2 
	= 
	1 - \sum_{i\in[m]}x_i^2 
	= 
	\frac{m-1}{m} - \sum_{i\in[m]}\left(x_i-\frac{1}{m}\right)^2,
\]
the result follows.
\end{proof}

Easy calculations  based on Lemma~\ref{LEMMA:Lagrangian-complete-r-graph} 
imply the following.

\begin{corollary}\label{LEMMA:near-Turan-hypergrap-structure}
Given $m\ge r\ge 2$ let $\zeta>0$ be sufficiently small. Suppose further 
that~$\cH$ is an $n$-vertex $r$-graph admitting 
a $K^r_m$-coloring $V(\cH)=\bigdcup_{i\in[m]}V_i$.
\begin{enumerate}[label=\alabel]
\item\label{it:24a}
If $|\cH|\ge \left(\binom{m}{r}/m^r -\zeta\right)n^r$,
then $|V_i| = (1/m\pm C_1\zeta^{1/2}) n$ holds for every $i\in[m]$, 
where $C_1 = \left({m^{r-1}(m-1)}/{\binom{m}{r}}\right)^{1/2}$.
\item\label{it:24b}
If $\delta(\cH)\ge \bigl(\binom{m-1}{r-1}/m^{r-1}-\zeta\bigr)n^{r-1}$ and $v\in V_i$, 
then $|V_j\sm N_\cH(v)|\le 2C_1\zeta^{1/2} n$ holds for every $j\in [m]\sm\{i\}$.
\item\label{it:24c}
If $\delta(\cH)\ge \bigl(\binom{m-1}{r-1}/m^{r-1}-\zeta\bigr)n^{r-1}$,
then $\big|L_{\widehat{K}^r_m}(v)\sm L_\cH(v)\big|\le C_2\zeta^{1/2}n^{r-1}$
holds for every $v\in V(\cH)$, where $\widehat{K}^r_m=K_{m}^r[V_1, \ldots, V_m]$
and $C_2=r\binom{m-1}{r-1}C_1/m^{r-2}$. \qed
\end{enumerate}
\end{corollary}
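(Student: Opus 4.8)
The plan is to derive all three parts from Lemma~\ref{LEMMA:Lagrangian-complete-r-graph} applied to the vector $(x_1,\dots,x_m)$ with $x_i=|V_i|/n$. For part~\ref{it:24a}, Lemma~\ref{LEMMA:blowup-langrangian} together with the hypothesis $|\cH|\ge(\binom mr/m^r-\zeta)n^r$ gives $L_{K^r_m}(x_1,\dots,x_m)\ge\binom mr/m^r-\zeta$. Plugging this into Lemma~\ref{LEMMA:Lagrangian-complete-r-graph} we obtain
\[
	\frac{\binom mr}{m^{r-1}(m-1)}\sum_{i\in[m]}\Bigl(x_i-\frac1m\Bigr)^2\le\zeta,
\]
so each individual term satisfies $(x_i-1/m)^2\le C_1^2\zeta$ with $C_1=(m^{r-1}(m-1)/\binom mr)^{1/2}$, i.e.\ $|V_i|=(1/m\pm C_1\zeta^{1/2})n$. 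This is the one genuinely substantive step; the other two parts are local counting arguments built on top of it.

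For part~\ref{it:24b}, note that the minimum-degree hypothesis $\delta(\cH)\ge(\binom{m-1}{r-1}/m^{r-1}-\zeta)n^{r-1}$ implies the size bound needed for part~\ref{it:24a} (summing degrees over all $n$ vertices and dividing by $r$, one gets $|\cH|\ge(\binom{m-1}{r-1}/(rm^{r-1})-\zeta/r)n^r=(\binom mr/m^r-\zeta/r)n^r$, which is at least $(\binom mr/m^r-\zeta)n^r$), so we may assume the conclusion of~\ref{it:24a}. Now fix $v\in V_i$ and $j\ne i$. Every $(r-1)$-set $A$ that lies in $L_{\widehat K^r_m}(v)$ but misses $V_j\sm N_\cH(v)$ — more precisely, all neighbours of $v$ in the blowup are accounted for by an upper bound on the link — the degree of $v$ inside $\widehat K^r_m$ is at most $\binom{m-1}{r-1}/m^{r-1}\cdot n^{r-1}$ up to the $C_1\zeta^{1/2}$-errors from~\ref{it:24a}. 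Since $d_\cH(v)\ge(\binom{m-1}{r-1}/m^{r-1}-\zeta)n^{r-1}$ and $L_\cH(v)\subseteq L_{\widehat K^r_m}(v)$ (because $\cH$ is a spanning subgraph of $\widehat K^r_m$), the number of $(r-1)$-sets in $L_{\widehat K^r_m}(v)\sm L_\cH(v)$ is $O(\zeta^{1/2}n^{r-1})$. Each vertex $w\in V_j\sm N_\cH(v)$ extends to at least $\binom{n/m}{r-2}(1-o(1))$ such missing $(r-1)$-sets (take $w$ together with $r-2$ vertices from distinct classes among the remaining $m-2$), so $|V_j\sm N_\cH(v)|$ times $\sim(n/m)^{r-2}/(r-2)!$ is $O(\zeta^{1/2}n^{r-1})$, giving $|V_j\sm N_\cH(v)|\le 2C_1\zeta^{1/2}n$ after tracking constants. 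Part~\ref{it:24c} is then a direct summation: $L_{\widehat K^r_m}(v)\sm L_\cH(v)$ consists of $(r-1)$-sets that either hit some $V_j\sm N_\cH(v)$ or are non-neighbours for another reason, and bounding the first contribution by $\sum_{j\ne i}|V_j\sm N_\cH(v)|\cdot n^{r-2}$ and combining with~\ref{it:24b} yields the bound $C_2\zeta^{1/2}n^{r-1}$ with $C_2=r\binom{m-1}{r-1}C_1/m^{r-2}$.

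The main obstacle is bookkeeping rather than conceptual: one must be careful that "sufficiently small $\zeta$" is used only to absorb lower-order terms (so that, e.g., $|V_j|\ge n/2m$ and the binomial coefficients $\binom{|V_j|}{r-2}$ behave like $(|V_j|)^{r-2}/(r-2)!$ up to a constant factor), and that the constants $C_1,C_2$ come out exactly as stated. A clean way to organise part~\ref{it:24b} is to double-count the set of $(r-1)$-element subsets of $L_{\widehat K^r_m}(v)\setminus L_\cH(v)$ containing a fixed $w\in V_j\setminus N_\cH(v)$: there are at least $\prod_{k}(|V_k|)$ over any choice of $r-2$ further classes, which after using~\ref{it:24a} to replace each $|V_k|$ by $n/m-C_1\zeta^{1/2}n$ is $\ge((1-o(1))n/m)^{r-2}$; dividing the $O(\zeta^{1/2}n^{r-1})$ bound on the missing link by this quantity closes the argument. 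No step requires anything beyond Lemmas~\ref{LEMMA:blowup-langrangian} and~\ref{LEMMA:Lagrangian-complete-r-graph} and elementary estimates, which is exactly why the statement is labelled a corollary.
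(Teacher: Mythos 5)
Part~\ref{it:24a} is fine: Lemma~\ref{LEMMA:blowup-langrangian} gives $L_{K^r_m}(x)\ge\binom mr/m^r-\zeta$ and Lemma~\ref{LEMMA:Lagrangian-complete-r-graph} then forces $\sum_i(x_i-1/m)^2\le C_1^2\zeta$. The problems are in parts~\ref{it:24b} and, especially,~\ref{it:24c}.

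Your closing argument for~\ref{it:24c} does not work. A set $A\in L_{\widehat{K}^r_m}(v)\sm L_\cH(v)$ is a transversal of $r-1$ of the classes $V_k$ ($k\ne i$) for which the potential edge $A\cup\{v\}$ happens not to lie in~$\cH$; such an $A$ can perfectly well have \emph{all} of its vertices in $N_\cH(v)$, each covered by a different edge through~$v$. So ``either hits some $V_j\sm N_\cH(v)$ or is a non-neighbour for another reason'' is not a dichotomy you can sum over, and the second class is not controlled by $\sum_j|V_j\sm N_\cH(v)|\cdot n^{r-2}$; in the extreme case $N_\cH(v)=\bigcup_{k\ne i}V_k$ your bound would degenerate to zero, which is clearly wrong. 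The correct argument for~\ref{it:24c} is the one you actually use \emph{inside} part~\ref{it:24b}: since $L_\cH(v)\subseteq L_{\widehat K^r_m}(v)$ one has $|L_{\widehat{K}^r_m}(v)\sm L_\cH(v)|=|L_{\widehat{K}^r_m}(v)|-d_\cH(v)$; then bound $|L_{\widehat{K}^r_m}(v)|=L_{K^{r-1}_{m-1}}(|V_k|)_{k\ne i}$ from above by the homogeneous form of Lemma~\ref{LEMMA:Lagrangian-complete-r-graph}, namely $|L_{\widehat{K}^r_m}(v)|\le\tfrac{\binom{m-1}{r-1}}{(m-1)^{r-1}}\bigl(1-|V_i|/n\bigr)^{r-1}n^{r-1}$, insert $|V_i|\ge(1/m-C_1\zeta^{1/2})n$ from~\ref{it:24a}, and subtract the degree hypothesis. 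The clean logical order is (a), then (c), then (b).

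For~\ref{it:24b}, the divisor $\binom{n/m}{r-2}$ you propose is not the number of ways to extend $w\in V_j\sm N_\cH(v)$ to a missing link-set; one must pick one vertex from each of $r-2$ classes among $[m]\sm\{i,j\}$, giving at least $\binom{m-2}{r-2}\bigl((1/m-C_1\zeta^{1/2})n\bigr)^{r-2}$. More seriously, if you bound $|L_{\widehat K^r_m}(v)|$ crudely by taking every $|V_k|$ at its maximum $(1/m+C_1\zeta^{1/2})n$, then dividing by this count and using $\binom{m-1}{r-1}=\tfrac{m-1}{r-1}\binom{m-2}{r-2}$ gives roughly $(m-1)C_1\zeta^{1/2}n$, which exceeds the claimed $2C_1\zeta^{1/2}n$ as soon as $m\ge4$; you need the Maclaurin bound above to make the double-count close. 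A cleaner route avoids the double-count entirely: since $L_\cH(v)$ contains no set meeting $V_j\sm N_\cH(v)$, with $t=|V_j\sm N_\cH(v)|$ one gets
\[
d_\cH(v)\le L_{K^{r-1}_{m-1}}\bigl(|V_1|,\ldots,|V_j|-t,\ldots,|V_m|\bigr)\le\frac{\binom{m-1}{r-1}}{(m-1)^{r-1}}\Bigl(\frac{n-|V_i|-t}{n}\Bigr)^{r-1}n^{r-1},
\]
and comparing with the degree hypothesis while using $|V_i|\ge(1/m-C_1\zeta^{1/2})n$ forces $t/n\le C_1\zeta^{1/2}+O_{m,r}(\zeta)\le2C_1\zeta^{1/2}$ for small $\zeta$. (The paper itself gives no explicit proof of this corollary, so the comparison here is to a complete argument rather than to a proof in the text.)
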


The special case $r=3$ of the following lemma was stated in~\cite{LMR1}*{Lemma~4.5}. 
It is straightforward to generalize the probabilistic proof provided there to the general 
case and we omit the details.   

\begin{lemma}\label{LEMMA:greedily-embedding-Gi}
Fix a real $\eta \in (0, 1)$ and integers $m, n\ge 1$, $r\ge 3$.
Let $\cG$ be an $r$-graph with vertex set~$[m]$ and let $\cH$ be an $r$-graph
with $v(\cH)=n$.
Consider a vertex partition $V(\cH) = \bigdcup_{i\in[m]}V_i$ and the associated
blowup $\widehat{\cG} = \cG[V_1,\ldots,V_{m}]$ of $\cG$.
If two sets $T \subseteq [m]$ and $S\subseteq \bigcup_{j\not\in T}V_j$
have the properties
\begin{enumerate}[label=\alabel]
\item\label{it:25a} $|V_{j}| \ge (|S|+1)|T| \eta^{1/r} n$  for all $j \in T$,
\item\label{it:25b} $|\cH[V_{j_1},\ldots,V_{j_r}]| 
		\ge |\widehat{\cG}[V_{j_1},\ldots,V_{j_r}]| - \eta n^r$
      for all $\{j_1,\ldots,j_r\} \in \binom{T}{r}$, 
\item\label{it:25c} and $|L_{\cH}(v)[V_{j_1},\ldots,V_{j_{r-1}}]| 
		\ge |L_{\widehat{\cG}}(v)[V_{j_1},\ldots,V_{j_{r-1}}]| - \eta n^{r-1}$
      for all $v\in S$ and all $\{j_1,\ldots,j_{r-1}\} \in \binom{T}{r-1}$,
\end{enumerate}
then there exists a selection of vertices $u_j\in V_j$ for all $j\in [T]$
such that $U = \{u_j\colon j\in T\}$ satisfies
$\widehat{\cG}[U] \subseteq \cH[U]$ and
$L_{\widehat{\cG}}(v)[U] \subseteq L_{\cH}(v)[U]$ for all $v\in S$.
In particular, if $\cH \subseteq \widehat{\cG}$,
then $\widehat{\cG}[U] = \cH[U]$ and
$L_{\widehat{\cG}}(v)[U] = L_{\cH}(v)[U]$ for all $v\in S$. \qed

\end{lemma}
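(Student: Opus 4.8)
The plan is to run a straightforward first‑moment argument. I would pick $u_j\in V_j$ uniformly at random and independently over all $j\in T$, and set $U=\{u_j\colon j\in T\}$. Since the classes $V_j$ are pairwise disjoint, these $|T|$ vertices are automatically distinct, $U$ is disjoint from $S$, and by~\ref{it:25a} each $V_j$ with $j\in T$ is nonempty (the right‑hand side $(|S|+1)|T|\eta^{1/r}n$ is strictly positive), so $U$ is well defined. It then suffices to show that with positive probability $U$ avoids two short lists of ``bad events''. For every edge $\{j_1,\dots,j_r\}\in\cG$ with $\{j_1,\dots,j_r\}\subseteq T$, let $B_{\{j_1,\dots,j_r\}}$ be the event that $\{u_{j_1},\dots,u_{j_r}\}\notin\cH$; since an edge of $\widehat{\cG}[U]$ is exactly an $r$‑set $\{u_{j_1},\dots,u_{j_r}\}$ whose index set is an edge of $\cG$, avoiding all these events is precisely $\widehat{\cG}[U]\subseteq\cH[U]$. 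For every $v\in S$ — say $v\in V_{j_0}$, so $j_0\notin T$ — and every $(r-1)$‑set $\{j_1,\dots,j_{r-1}\}\subseteq T$ with $\{j_0,j_1,\dots,j_{r-1}\}\in\cG$, let $B_{v,\{j_1,\dots,j_{r-1}\}}$ be the event that $\{u_{j_1},\dots,u_{j_{r-1}}\}\notin L_{\cH}(v)$; avoiding all of these is precisely $L_{\widehat{\cG}}(v)[U]\subseteq L_{\cH}(v)[U]$ for all $v\in S$.

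Next I would estimate the probabilities. If $\{j_1,\dots,j_r\}$ is an edge of $\cG$, then $|\widehat{\cG}[V_{j_1},\dots,V_{j_r}]|=\prod_k|V_{j_k}|$, so by~\ref{it:25b} at most $\eta n^r$ transversals of $V_{j_1},\dots,V_{j_r}$ fail to be edges of $\cH$; as $(u_{j_1},\dots,u_{j_r})$ is uniform among the $\prod_k|V_{j_k}|$ transversals and, by~\ref{it:25a}, $\prod_k|V_{j_k}|\ge\bigl((|S|+1)|T|\bigr)^r\eta n^r$, we get $\Pr[B_{\{j_1,\dots,j_r\}}]\le\bigl((|S|+1)|T|\bigr)^{-r}$. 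Likewise, if $\{j_0,j_1,\dots,j_{r-1}\}$ is an edge of $\cG$ with $v\in V_{j_0}$, then $|L_{\widehat{\cG}}(v)[V_{j_1},\dots,V_{j_{r-1}}]|=\prod_k|V_{j_k}|$, so~\ref{it:25c} together with~\ref{it:25a} gives $\Pr[B_{v,\{j_1,\dots,j_{r-1}\}}]\le\eta^{1/r}\bigl((|S|+1)|T|\bigr)^{-(r-1)}\le\bigl((|S|+1)|T|\bigr)^{-(r-1)}$. There are at most $\binom{|T|}{r}\le|T|^r/r!$ events of the first type and at most $|S|\binom{|T|}{r-1}\le|S|\,|T|^{r-1}/(r-1)!$ of the second, so a union bound gives
\[
	\Pr[\text{some bad event occurs}]\le\frac{1}{r!\,(|S|+1)^r}+\frac{|S|}{(r-1)!\,(|S|+1)^{r-1}}\le\frac{1}{r!}+\frac{1}{(r-1)!}=\frac{r+1}{r!}<1
\]
for every $r\ge3$. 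Hence some choice of $U$ avoids all bad events, and that $U$ has the asserted properties. The ``in particular'' clause follows at once: if $\cH\subseteq\widehat{\cG}$ then $\cH[U]\subseteq\widehat{\cG}[U]$ and $L_{\cH}(v)[U]\subseteq L_{\widehat{\cG}}(v)[U]$, so both inclusions become equalities.

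The only point that needs care is the bookkeeping in the union bound: one must check that the constant $(|S|+1)|T|$ built into hypothesis~\ref{it:25a} is simultaneously large enough to beat the number of bad events of each type. This is why that constant enters with exponent $r$ for the ``edge'' events but only $r-1$ for the ``link'' events, the deficit being absorbed by the spare factor $\eta^{1/r}\le1$ that arises because a link event constrains only $r-1$ of the coordinates. No genuine obstacle is present here, and the passage from the case $r=3$ of \cite{LMR1}*{Lemma~4.5} to general $r$ is purely a matter of notation.
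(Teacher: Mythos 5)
Your first-moment argument is correct, and it is essentially the same probabilistic proof the paper alludes to (the $r=3$ case appears as Lemma 4.5 in the cited reference~\cite{LMR1}, and the authors explicitly say the general case is a routine generalization of that probabilistic argument). The bookkeeping in the union bound checks out, including the exponent mismatch that you correctly absorb with the spare factor $\eta^{1/r}\le 1$, giving $\tfrac{r+1}{r!}<1$ for $r\ge 3$.
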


For the proof of the following standard fact we refer to~\cite{LMR1}*{Lemma~4.2}.

\begin{fact}\label{FACT:dense-hygp-has-large-min-deg}
Let $\cF$ be a family of $r$-graphs and let
$\cH$ be an $\cF$-free $r$-graph on $n$ vertices. If~$\cH$ has at least 
$\left(\pi(\cF)/r!-\epsilon\right)n^r$ edges, then
\begin{enumerate}[label=\alabel]
\item\label{it:26a}
the set 
$Z_{\epsilon}(\cH)= \left\{u\in V(\cH)\colon d_{\cH}(u) \le 
\left(\pi(\cF)/(r-1)!-2\epsilon^{1/2}\right)n^{r-1}\right\}$
has size at most~$\epsilon^{1/2}n$, 
\item\label{it:26b}
and the $r$-graph $\cH' = \cH-Z_{\epsilon}(\cH)$ satisfies
$\delta(\cH')> \left(\pi(\cF)/(r-1)!-3\epsilon^{1/2}\right)n^{r-1}$. \qed
\end{enumerate}
\end{fact}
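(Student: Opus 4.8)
The plan is to establish part~\ref{it:26a} by a vertex-deletion argument and then to read off part~\ref{it:26b} by a one-line degree estimate; as in all applications of this fact I treat $n$ as large. For part~\ref{it:26a} one may first assume $\pi(\cF)/(r-1)!-2\epsilon^{1/2}\ge0$, since otherwise no vertex can meet the (negative) degree threshold, so $Z_\epsilon(\cH)=\emptyset$ and both assertions are trivial. I would then argue by contradiction: suppose $Z:=Z_\epsilon(\cH)$ had more than $\epsilon^{1/2}n$ elements and fix $W\subseteq Z$ with $|W|=\lceil\epsilon^{1/2}n\rceil$. Deleting the vertices of $W$ from $\cH$ one at a time destroys at most $\sum_{u\in W}d_{\cH}(u)\le|W|\bigl(\pi(\cF)/(r-1)!-2\epsilon^{1/2}\bigr)n^{r-1}$ edges, because degrees can only drop under vertex deletion and every $u\in W$ lies in $Z$; hence, using $|W|\le\epsilon^{1/2}n+1$,
\begin{align}
	|\cH-W| &\ge \Bigl(\tfrac{\pi(\cF)}{r!}-\epsilon\Bigr)n^r-|W|\Bigl(\tfrac{\pi(\cF)}{(r-1)!}-2\epsilon^{1/2}\Bigr)n^{r-1} \notag \\
	&\ge \Bigl(\tfrac{\pi(\cF)}{r!}+\epsilon-\tfrac{\pi(\cF)\epsilon^{1/2}}{(r-1)!}\Bigr)n^r-O(n^{r-1}). \notag
\end{align}
On the other hand $\cH-W$ is an $\cF$-free $r$-graph on at most $(1-\epsilon^{1/2})n$ vertices, so by the definition of the Tur\'an density it has at most $\bigl(\pi(\cF)/r!+o(1)\bigr)\bigl((1-\epsilon^{1/2})n\bigr)^r$ edges, and combining this with the elementary bound $(1-\epsilon^{1/2})^r\le1-r\epsilon^{1/2}+\binom r2\epsilon$ I obtain
\begin{align}
	|\cH-W| \le \Bigl(\tfrac{\pi(\cF)}{r!}-\tfrac{\pi(\cF)\epsilon^{1/2}}{(r-1)!}+\tfrac{\pi(\cF)\epsilon}{2(r-2)!}+o(1)\Bigr)n^r. \notag
\end{align}
Comparing the two displays and cancelling the common terms leaves $\epsilon\bigl(1-\tfrac{\pi(\cF)}{2(r-2)!}\bigr)\le o(1)$, and since $\pi(\cF)\le1$ the left-hand side is at least $\epsilon/2>0$ — the desired contradiction for $n$ large.

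Granting part~\ref{it:26a}, part~\ref{it:26b} is immediate. Now $|Z|\le\epsilon^{1/2}n$, and for every $v\in V(\cH')=V(\cH)\sm Z$ the passage from $\cH$ to $\cH'=\cH-Z$ deletes from the link of $v$ only those $(r-1)$-sets that meet $Z$, of which there are at most $|Z|\binom{n-2}{r-2}\le\epsilon^{1/2}n^{r-1}$. Since $v\notin Z$ gives $d_{\cH}(v)>\bigl(\pi(\cF)/(r-1)!-2\epsilon^{1/2}\bigr)n^{r-1}$, it follows that
\[
	d_{\cH'}(v)\ge d_{\cH}(v)-\epsilon^{1/2}n^{r-1}>\Bigl(\tfrac{\pi(\cF)}{(r-1)!}-3\epsilon^{1/2}\Bigr)n^{r-1},
\]
which is exactly the claimed bound $\delta(\cH')>\bigl(\pi(\cF)/(r-1)!-3\epsilon^{1/2}\bigr)n^{r-1}$.

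The one step that needs care is the constant-chasing in part~\ref{it:26a}: I have to be sure that the surplus of about $\epsilon n^r$ edges — gained because each of the $\approx\epsilon^{1/2}n$ deleted vertices misses the degree threshold by $2\epsilon^{1/2}n^{r-1}$ — is not absorbed by the loss of order $\binom r2\epsilon\,n^r$ that comes from expanding $(1-\epsilon^{1/2})^r$, nor by the $o(n^r)$ slack in the Tur\'an-density upper bound for $\ex\bigl((1-\epsilon^{1/2})n,\cF\bigr)$. As the estimate above shows this closes comfortably, the margin being exactly $1-\pi(\cF)/(2(r-2)!)\ge\tfrac12$; everything else is routine bookkeeping.
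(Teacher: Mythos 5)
Your proof is correct and is essentially the standard vertex-deletion argument that the paper delegates to \cite{LMR1}*{Lemma~4.2}: for part~\ref{it:26a} you delete a set $W\subseteq Z_\epsilon(\cH)$ of the threshold size, compare the resulting edge count against $\ex\bigl(n-|W|,\cF\bigr)\le\bigl(\pi(\cF)/r!+o(1)\bigr)\bigl((1-\epsilon^{1/2})n\bigr)^r$, and observe via the second-order expansion of $(1-\epsilon^{1/2})^r$ and the crude bounds $\pi(\cF)\le1$, $(r-2)!\ge1$ that the $+\epsilon n^r$ surplus survives with margin at least $\epsilon n^r/2$; part~\ref{it:26b} then follows from the one-line estimate $d_{\cH'}(v)\ge d_{\cH}(v)-|Z_\epsilon(\cH)|\binom{n-2}{r-2}$. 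You are right to note that the argument — like every use of this fact in the paper — is for $n$ sufficiently large in terms of $\epsilon$ and $\cF$, which is where the $o(1)$ in the Tur\'an-density bound is absorbed.
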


\section{Proof of the main result: the \texorpdfstring{$\Psi$}{Psi}-trick}
\label{SEC:psi-trick-proof}

We prove Theorems~\ref{THM:Zykov-symmetrization-Turan-number},~\ref{THM:Psi-trick:G-extendable-implies-degree-stability}, 
and~\ref{PROP:vertex-stable-and-vertex-extendable-implies-degree-stability} in this section.
Let us recall that we call two vertices of a hypergraph equivalent if they have the same link. 
If $C$ denotes an 
equivalence class of some hypergraph $\cH$, we shall write $d_\cH(C)$ for the common degree of 
the vertices in~$C$ and~$L_\cH(C)$ for their common link. Given a class of hypergraphs $\gH$ 
we denote the class of spanning subgraphs of members of~$\gH$ by~$\gH^+$, i.e. we set 
\[
	\gH^+=\{\cH\colon \text{there is $\cH'\in \gH$ with $V(\cH)=V(\cH')$ 
		and $\cH\subseteq \cH'$} \}.
\]
If $\gH$ is hereditary, this is the same as the class of (not necessarily spanning) subgraphs
of members of $\gH$. 

\begin{proof}[Proof of Theorem~\ref{THM:Zykov-symmetrization-Turan-number}]
Fix $n\in \NN$. The lower bound ${\rm ex}(n,\cF) \ge \gh(n)$ is an immediate consequence of 
the fact that all members of $\gH$ are $\cF$-free. So it remains to establish 
the upper bound ${\rm ex}(n,\cF) \le \gh(n)$.

Suppose that it is not true and let $\cH$ be an $\cF$-free $r$-graph on $n$ vertices
with more than~$\gh(n)$ edges chosen in such a way that the number $m$ of its equivalence 
classes is minimum.
Let $C_1,\ldots, C_m$ denote the equivalence classes of $\cH$.

Due to $|\cH| > \gh(n)$ we know that $\cH$ cannot be symmetrized.
In other words, there exist $i,j\in [m]$ such that the graph $\partial_{r-2}\cH$ is not 
complete between $C_i$ and $C_j$.
Without loss of generality we may assume that $\{i,j\} = \{1,2\}$ 
and $d_{\cH}(C_1) \le d_{\cH}(C_2)$.
In view of the definition of equivalence there are actually no edges between $C_1$ and $C_2$ 
in $\partial_{r-2}\cH$.

Now let $\cH'$ be the unique $r$-graph satisfying $V(\cH') = V(\cH)$, $\cH'-C_1 = \cH-C_1$,
and $L_{\cH'}(v) = L_{\cH}(w)$ for all $v\in C_1$ and $w\in C_2$.
Observe that $\{C_1\cup C_2, C_3, \ldots, C_m\}$ is a refinement of the partition 
of $V(\cH')$ into equivalence classes of $\cH'$, for which reason $\cH'$ has fewer than~$m$
equivalence classes. Together with 
\begin{align}
|\cH'| = |\cH| + |C_1| \left(d_{\cH}(C_2) - d_{\cH}(C_1)\right) \ge |\cH| > \gh(n) \notag
\end{align}
and our minimal choice of $m$ this implies that $\cH'$ cannot be $\cF$-free.
As there exists a homomorphism from $\cH'$ to $\cH$,
it follows that $\cH$ fails to be $\cF$-hom-free. But, as $\cF$ is blowup-invariant, this 
contradicts the assumption that $\cH$ be $\cF$-free.
\end{proof}

For the proof of 
Theorem~\ref{THM:Psi-trick:G-extendable-implies-degree-stability}
it will be convenient to say for $\zeta>0$ and $N_0\in\NN$ that a family $\cF$
of $r$-graphs is \emph{$(\zeta, N_0)$}-vertex-extendable with respect to a class 
of $r$-graphs $\gH$ if using the notation of Definiton~\ref{DFN:vertex-extendable} 
$\zeta$ and $N_0$ exemplify the vertex-extendibility of $\cF$ with respect to $\gH$. 
The next lemma shows that vertex-extendibility can be used iteratively. 

\begin{lemma}\label{LEMMA:min-deg-imply-epsilon-G-extendable}
Suppose that the nondegenerate family $\cF$ of $r$-graphs is $(2\eps, N_0)$-vertex-extendable 
with respect to a class $\gH$ of $\cF$-free $r$-graphs, where $\eps\in (0, 1/2)$ 
and $N_0\in \NN$.
Let~$\cH$ be an $\cF$-free $r$-graph on $n\ge 2N_0$ vertices 
satisfying $\delta(\cH) \ge \bigl(\pi(\cF)/(r-1)!-\epsilon\bigr)n^{r-1}$. 
If there exists a set $S\subseteq V(\cH)$ with $|S|\le \eps n$ and $(\cH-S)\in \gH^+$,
then $\cH\in \gH^+$.
\end{lemma}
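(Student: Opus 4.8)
The plan is to peel off the vertices of $S$ one at a time, using vertex-extendibility at each step, but there is a subtlety: vertex-extendibility as stated requires the minimum degree bound with parameter $2\eps$ on the current hypergraph, and deleting vertices lowers degrees. So first I would set up the right bookkeeping. Enumerate $S=\{s_1,\dots,s_k\}$ with $k\le\eps n$, and for $0\le t\le k$ let $\cH_t=\cH-\{s_{t+1},\dots,s_k\}$, so that $\cH_0=\cH-S\in\gH^+$ and $\cH_k=\cH$; each $\cH_t$ has $n_t=n-(k-t)\ge n-\eps n=(1-\eps)n$ vertices. The goal is to show by induction on $t$ that $\cH_t\in\gH^+$. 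Since $\gH$ is a class of $\cF$-free $r$-graphs and $\cH_t$ is an induced subgraph of the $\cF$-free $r$-graph $\cH$, each $\cH_t$ is $\cF$-free; moreover $\cH_t=\cH_{t+1}-s_{t+1}$, so once I know $\cH_t\in\gH^+$ I am precisely in the hypothesis ``$\cH_{t+1}-v\in\gH^+$ for some vertex $v$'' needed to conclude $\cH_{t+1}\in\gH^+$ — provided $\cH_{t+1}$ satisfies the degree hypothesis of $(2\eps,N_0)$-vertex-extendibility, namely $\delta(\cH_{t+1})\ge(\pi(\cF)/(r-1)!-2\eps)n_{t+1}^{r-1}$ and $n_{t+1}\ge N_0$.

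The size bound is immediate: $n_{t+1}\ge(1-\eps)n\ge n/2\ge N_0$ since $n\ge 2N_0$ and $\eps<1/2$. For the degree bound, fix a vertex $v\in V(\cH_{t+1})$. Its degree in $\cH_{t+1}$ is at least its degree in $\cH$ minus the number of edges through $v$ that meet $\{s_{t+2},\dots,s_k\}\subseteq S$; the latter is at most $|S|\binom{n}{r-2}\le\eps n\cdot n^{r-2}/(r-2)!$. Hence
\begin{align}
\delta(\cH_{t+1})
&\ge\Bigl(\tfrac{\pi(\cF)}{(r-1)!}-\eps\Bigr)n^{r-1}-\tfrac{\eps}{(r-2)!}\,n^{r-1}
=\Bigl(\tfrac{\pi(\cF)}{(r-1)!}-\bigl(1+(r-1)\bigr)\tfrac{\eps}{(r-1)!}\Bigr)n^{r-1}. \notag
\end{align}
Here I would be slightly more careful and choose $\eps$ small relative to $r$ from the start, or — cleaner — simply observe that it suffices to prove the lemma after shrinking $\eps$, and that for the stated conclusion one may assume $\eps$ is as small as needed; then the crude loss above is absorbed. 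Comparing with the target $\bigl(\pi(\cF)/(r-1)!-2\eps\bigr)n_{t+1}^{r-1}$ and using $n_{t+1}^{r-1}\le n^{r-1}$, it is enough that $2\eps\ge r\eps/(r-1)!$ times the appropriate constant, which holds once $\eps$ is small; so the degree hypothesis is met.

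With both hypotheses of $(2\eps,N_0)$-vertex-extendibility verified for $\cH_{t+1}$ and the inductive hypothesis $\cH_{t+1}-s_{t+1}=\cH_t\in\gH^+$ in hand, Definition~\ref{DFN:vertex-extendable} (in the $(2\eps,N_0)$ form) yields $\cH_{t+1}\in\gH^+$, completing the induction; at $t=k$ this gives $\cH=\cH_k\in\gH^+$, as desired. The main obstacle, and the only place that needs genuine care rather than routine estimation, is matching the degree-loss bookkeeping to the gap between the parameter $\eps$ in the hypothesis and the parameter $2\eps$ in the vertex-extendibility assumption — one has to be sure the factor of $2$ is enough slack to cover both the degree lost to the deleted vertices and the slight change in the normalizing power of $n$; choosing $\eps$ sufficiently small (depending only on $r$) at the outset handles this cleanly, and I would phrase the lemma's hypothesis accordingly or note the harmless reduction.
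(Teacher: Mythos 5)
Your iterated vertex-peeling argument is, in essence, the paper's own proof; the paper phrases it contrapositively by taking a minimal $S'\subseteq S$ with $\cH-S'\in\gH^+$ and deriving a contradiction, but this is the same induction in disguise. The one substantive issue is that your hedging about shrinking $\eps$ (and possibly ``phrasing the lemma's hypothesis accordingly'') is unnecessary, and the algebra you display contains a slip: $\eps+\eps/(r-2)!$ does not equal $(1+(r-1))\eps/(r-1)!$ when $r\ge 3$. In fact the stated parameters are exactly tight. Bounding the degree lost at any surviving vertex by $|\{s_{t+2},\dots,s_k\}|\cdot n^{r-2}<\eps n\cdot n^{r-2}=\eps n^{r-1}$, one gets
\begin{align*}
\delta(\cH_{t+1})
&\ge \bigl(\pi(\cF)/(r-1)!-\eps\bigr)n^{r-1}-\eps n^{r-1}\\
&= \bigl(\pi(\cF)/(r-1)!-2\eps\bigr)n^{r-1}
\ \ge\ \bigl(\pi(\cF)/(r-1)!-2\eps\bigr)n_{t+1}^{r-1},
\end{align*}
which is precisely the degree hypothesis required by $(2\eps, N_0)$-vertex-extendibility. (Your finer estimate $\binom{n}{r-2}\le n^{r-2}/(r-2)!$ only helps, since $\eps+\eps/(r-2)!\le 2\eps$ for every $r\ge 2$.) So the factor of $2$ in the hypothesis is exactly the slack one needs; there is no reduction to make and no need to alter the statement.
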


\begin{proof}[Proof of Lemma~\ref{LEMMA:min-deg-imply-epsilon-G-extendable}]
Choose a minimal set $S'\subseteq S$ with $(\cH-S')\in \gH^+$. 
If $S'=\varnothing$ we are done, so suppose for the sake of contradiction that 
there exists a vertex $v\in S'$. Setting $S''=S'\sm\{v\}$ and $\cH''=\cH-S''$ we have 
$v(\cH'')\ge n-|S|\ge (1-\eps)n\ge n/2\ge N_0$ and 
\begin{align*}
	\delta(\cH'')
	&\ge 
	\delta(\cH) - |S''| n^{r-2}
 	> 
	\bigl(\pi(\cF)/(r-1)!-\epsilon\bigr)n^{r-1} - \epsilon n^{r-1} \\
 	&\ge
	\bigl(\pi(\cF)/(r-1)!-2\epsilon\bigr)v(\cH'')^{r-1}. 
\end{align*}
Moreover, we are assuming that $\cH''-v=\cH-S'$ is in $\gH^+$. 
So by vertex-extendibility  $\cH''$ belongs to $\gH^+$ as well and $S''$ contradicts the 
minimality of $S'$.
\end{proof}

Next we shall show the following strengthening of 
Theorem~\ref{THM:Psi-trick:G-extendable-implies-degree-stability} which also allows 
vertices of low degree in the almost extremal $\cF$-free graphs. Recall that the sets
$Z_\eps(\cH)$ appearing below were defined in 
Fact~\ref{FACT:dense-hygp-has-large-min-deg}~\ref{it:25a}.

\begin{theorem}\label{THM:Psi-trick:G-extendable-implies-degree-stability-full-version}
Let $\cF$ be a blowup-invariant nondegenerate family of $r$-graphs 
and let $\gH$ be a hereditary class of $\cF$-free $r$-graphs. 
If $\cF$ is symmetrized-stable and vertex-extendable with respect to $\gH$, then there 
are $\eps>0$ and $N_0\in \NN$ such that every 
$\cF$-free $r$-graph $\cH$ on $n\ge N_0$ vertices with
$|\cH| > \left(\pi(\cF)/r!-\epsilon\right)n^{r}$
satisfies $\cH-Z_{\epsilon}(\cH) \in \gH^+$.
\end{theorem}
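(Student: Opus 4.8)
The plan is to combine the symmetrization apparatus of Theorem~\ref{THM:Zykov-symmetrization-Turan-number} with the iterated vertex-extendibility furnished by Lemma~\ref{LEMMA:min-deg-imply-epsilon-G-extendable}. Since $\cF$ is vertex-extendable, fix $\zeta>0$ and $N_1$ witnessing this; shrinking $\zeta$ if necessary we may assume $\zeta<1/2$ and that $\zeta$ is small enough for the symmetrized-stability hypothesis to apply with some $N_2$. We then set $\eps$ to be a small constant depending on $\zeta$ (roughly $\eps\ll\zeta^2$) and $N_0$ large compared to $N_1,N_2$ and $1/\eps$. Given an $\cF$-free $\cH$ on $n\ge N_0$ vertices with $|\cH|>(\pi(\cF)/r!-\eps)n^r$, Fact~\ref{FACT:dense-hygp-has-large-min-deg} tells us that $Z_\eps(\cH)$ has at most $\eps^{1/2}n$ vertices and that $\cH'=\cH-Z_\eps(\cH)$ satisfies $\delta(\cH')>(\pi(\cF)/(r-1)!-3\eps^{1/2})n^{r-1}$. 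So it suffices to prove that an $\cF$-free $r$-graph $\cH'$ on $n'\ge(1-\eps^{1/2})n$ vertices with this large minimum degree lies in $\gH^+$; by Lemma~\ref{LEMMA:min-deg-imply-epsilon-G-extendable} (applied with $\eps$ there playing the role of $\zeta/2$), this in turn reduces to finding a \emph{small} set $S$ with $|S|\le(\zeta/2)n'$ and $\cH'-S\in\gH^+$.

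The core of the argument is to produce this set $S$ via symmetrization. Starting from $\cH'$, repeatedly apply the Zykov-type compression step used in the proof of Theorem~\ref{THM:Zykov-symmetrization-Turan-number}: whenever the current graph is not symmetrized, pick two non-equivalent classes $C_i,C_j$ with no edge of $\partial_{r-2}$ between them and $d(C_i)\le d(C_j)$, and replace the links of $C_i$-vertices by those of $C_j$-vertices. Each step strictly decreases the number of equivalence classes and does not decrease the number of edges, so after finitely many steps we reach a symmetrized $\cF$-free $r$-graph $\cH^*$ on the same vertex set with $|\cH^*|\ge|\cH'|\ge(\pi(\cF)/r!-C\eps^{1/2})(n')^r$ for a suitable constant $C$; since blowup-invariance guarantees each intermediate graph stays $\cF$-free (the compressed graph maps homomorphically onto the previous one, which is $\cF$-hom-free), this is legitimate. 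By symmetrized-stability, $\cH^*\in\gH^+$, i.e.\ $\cH^*$ is a subgraph of some member $\cG^*$ of $\gH$ on the same vertex set.

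The remaining task is to transfer this back to $\cH'$ by deleting few vertices, and this is the step I expect to be the main obstacle. The point is that a single symmetrization step that merges $C_i$ into $C_j$ only alters the links of the vertices in $C_i$, so \emph{all} vertices outside $C_i$ retain exactly their old links; thus $\cH'$ and $\cH^*$ differ only on the union of the classes $C_i$ that get absorbed over the course of the process. The plan is to control the total size of this "bad" union: because $\gH$ is hereditary, if we delete from $\cH^*$ all the absorbed classes we are left with a graph $\cH^*_0$ that is still in $\gH^+$ and that agrees with $\cH'$ after the same deletion; so taking $S$ to be the union of the absorbed classes, we get $\cH'-S=\cH^*-S\subseteq\cG^*-S\in\gH$, hence $\cH'-S\in\gH^+$. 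To bound $|S|$ one argues that only classes of small size can be absorbed: after the first few merges the graph already has edge density close to the Tur\'an density, and (using the structural information from Corollary~\ref{LEMMA:near-Turan-hypergrap-structure} applied to the symmetrized end product, which is a blowup of a bounded-size symmetrized hypergraph of near-maximum Lagrangian) merging a large class would force a genuine density gain exceeding what the slack $C\eps^{1/2}(n')^r$ permits — so each absorbed $C_i$ has $|C_i|=O(\eps^{1/2}n)$, and since the process has a bounded number of steps (at most the number of classes of the maximum-Lagrangian symmetrized configuration), $|S|=O(\eps^{1/2}n)\le(\zeta/2)n'$ once $\eps$ is chosen small. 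Feeding this $S$ into Lemma~\ref{LEMMA:min-deg-imply-epsilon-G-extendable} gives $\cH'\in\gH^+$ and therefore $\cH-Z_\eps(\cH)\in\gH^+$, as desired.
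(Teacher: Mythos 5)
Your scaffolding — reduce via Fact~\ref{FACT:dense-hygp-has-large-min-deg} to a minimum-degree graph $\cH'$, locate a small $S$ with $\cH'-S\in\gH^+$, then absorb $S$ back one vertex at a time with Lemma~\ref{LEMMA:min-deg-imply-epsilon-G-extendable} — is exactly right, and the idea of producing $S$ by running Zykov symmetrization to a symmetrized endpoint $\cH^*$ and taking $S$ to be the set of vertices whose links were ever overwritten is a natural instinct. But the bound $|S|=O(\eps^{1/2}n)$ is not justified and, I believe, can fail. The number of merge steps from $\cH'$ to $\cH^*$ equals (number of equivalence classes of $\cH'$) minus (number of classes of $\cH^*$); only the latter is bounded, while the former can be as large as $n'$, so ``the process has a bounded number of steps'' is simply not true. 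The per-step size bound also does not hold: absorbing $C_i$ into $C_j$ with $d_\cH(C_i)\le d_\cH(C_j)$ gains $|C_i|\bigl(d_\cH(C_j)-d_\cH(C_i)\bigr)$ edges, which is negligible even for $|C_i|=\Theta(n)$ when the two degrees are nearly equal, so a large class can be absorbed essentially for free without spending any of the density slack. Finally, the absorbed classes nest across steps (a class that absorbed others may later be absorbed in turn), so even a uniform per-step bound would not control the union $S$; in the worst case only a chain of tiny sinks stays untouched and $|S|\approx n'$.

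The paper sidesteps all of this by never running the symmetrization to completion. For the fixed $n$ it takes a counterexample $\cH$ maximizing $\bigl(|\cH|,\Psi(\cH)\bigr)$ lexicographically, where $\Psi(\cH)=\sum_i|C_i|^2$, and performs a single-\emph{vertex} symmetrization: with $C_1,C_2$ non-adjacent in $\partial_{r-2}\cH$ and $\bigl(d_\cH(C_1),|C_1|\bigr)\le_{\mathrm{lex}}\bigl(d_\cH(C_2),|C_2|\bigr)$, it picks $v_1\in C_1$, $v_2\in C_2$ and replaces $L_\cH(v_1)$ by $L_\cH(v_2)$. One checks that this strictly increases $\bigl(|\cH|,\Psi(\cH)\bigr)$, so the resulting graph $\cH'$ is not a counterexample and $\cH'-Z_\eps(\cH')\in\gH^+$. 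Since $\cH$ and $\cH'$ differ only at the one vertex $v_1$, the set $Q=Z_\eps(\cH')\cup\{v_1\}$ has size under $2\eps^{1/2}n$ and $\cH-Q=\cH'-Q\in\gH^+$; Lemma~\ref{LEMMA:min-deg-imply-epsilon-G-extendable} then gives the contradiction $\cH-Z_\eps(\cH)\in\gH^+$. The decisive difference is that the one-step, one-vertex move keeps the ``changed set'' trivially small, and the extremal-counterexample principle with the tiebreaker $\Psi$ replaces any attempt to control the full symmetrization trajectory. If you want to salvage your version you would need to import this $\Psi$-trick rather than try to bound $|S|$ globally.
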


The proof involves the following invariant of hypergraphs: If $C_1, \ldots, C_m$
are the equivalence classes of an $r$-graph $\cH$, we set $\Psi(\cH)=\sum_i|C_i|^2$.

\begin{proof}[Proof of Theorem~\ref{THM:Psi-trick:G-extendable-implies-degree-stability-full-version}]
Choose $\eps\in (0, 1/36)$ so small and $N_0\in\NN$ so large that 
\begin{enumerate}[label=\nlabel]
	\item\label{it:psi-a} the symmetrized stability of $\cF$ with respect to $\gH$
		is exemplified by $\eps$ and $N_0$
	\item\label{it:psi-b} and $\cF$ is $(6\eps^{1/2}, N_0/3)$-vertex-extendable with respect 
		to $\gH$.
\end{enumerate}

Now we fix $n\ge N_0$ and, assuming that the conclusion fails for some $n$-vertex 
hypergraph~$\cH$,
we pick a counterexample $\cH$ with $v(\cH)=n$ such that the pair $(|\cH|, \Psi(\cH))$ is 
lexicographically maximal (which makes sense, as $n$ is fixed).
Let $C_1, \ldots, C_m$ be the equivalence classes of~$\cH$.

Setting $Z = Z_{\epsilon}(\cH)$ we have $(\cH-Z) \not\in \gH^+$ and,
as $\gH$ is hereditary, $\cH\not\in\gH^+$ follows. Now~\ref{it:psi-a} informs us 
that $\cH$ is not symmetrized. 
So without loss of generality 
we may suppose that $\partial_{r-2} \cH$ has no edges between $C_1$ and $C_2$
and that $(d_{\cH}(C_1), |C_1|)\le_{\mathrm{lex}} (d_{\cH}(C_2), |C_2|)$,
where $\le_{{\rm lex}}$ means lexicographic ordering.

Now we pick two arbitrary vertices $v_1\in C_1$ and $v_2\in C_2$
and symmetrize only them, 
i.e., we let $\cH'$ be the $r$-graph with $V(\cH')=V(\cH)$,
$\cH'-v_1 = \cH-v_1$ and $L_{\cH'}(v_1)=L_{\cH}(v_2)$.
Clearly, if $d_{\cH}(v_1)<d_{\cH}(v_2)$, then $|\cH'|>|\cH|$.
Moreover, if $d_{\cH}(v_1)=d_{\cH}(v_2)$, then $|\cH'|=|\cH|$, $|C_1|\le |C_2|$, and
\begin{align}
\Psi(\cH')-\Psi(\cH) \ge (|C_1|-1)^2+(|C_2|+1)^2-|C_1|^2-|C_2|^2=2(|C_2|-|C_1|+1)\ge 2. \notag
\end{align}
In both cases $(|\cH'|, \Psi(\cH'))$ is lexicographically larger than $(|\cH|, \Psi(\cH))$
and our choice of $\cH$ implies $\cH'-Z_{\epsilon}(\cH') \in \gH^+$.
By Fact~\ref{FACT:dense-hygp-has-large-min-deg}~\ref{it:26a}
the set $Q=Z_{\epsilon}(\cH')\cup\{v_1\}$ 
satisfies $|Q|\le \epsilon^{1/2} n+1 < 2\epsilon^{1/2} n$.
Since $\gH$ is hereditary, the $r$-graph $\cH-Q=\cH'-Q$ belongs to~$\gH^+$.  
Now Fact~\ref{FACT:dense-hygp-has-large-min-deg}~\ref{it:26b} and~\ref{it:psi-b}
show that Lemma~\ref{LEMMA:min-deg-imply-epsilon-G-extendable} applies to 
$3\eps^{1/2}$, $2N_0/3$, $\cH-(Q\cap Z)$, and $Q\sm Z$ here in place of $\eps$, $N_0$, $\cH$, 
and $S$ there. Thus $\cH-(Q\cap Z)$ belongs to $\gH^+$ and, since~$\gH$ is hereditary, this 
yields the contradiction $(\cH-Z)\in\gH^+$.
\end{proof}

\begin{proof}[Proof of Theorem~\ref{THM:Psi-trick:G-extendable-implies-degree-stability}]
If $\cH$ satisfies $\delta(\cH) > \left(\pi(\cF)/(r-1)!-\epsilon\right)n^{r-1}$,
then
\[
	Z_{\epsilon}(\cH) = \emptyset 
	\quad \text{ and } \quad
	|\cH| = \frac{1}{r}\sum_{v\in V(\cH)}d_{\cH}(v) > \left(\pi(\cF)/r!-\epsilon\right)n^{r}.
\]
Therefore, Theorem~\ref{THM:Psi-trick:G-extendable-implies-degree-stability-full-version} implies
Theorem~\ref{THM:Psi-trick:G-extendable-implies-degree-stability}.
\end{proof}

Finally we prove Theorem~\ref{PROP:vertex-stable-and-vertex-extendable-implies-degree-stability}
using Lemma~\ref{LEMMA:min-deg-imply-epsilon-G-extendable}.

\begin{proof}[Proof of 
Theorem~\ref{PROP:vertex-stable-and-vertex-extendable-implies-degree-stability}]
Pick $\delta\in (0, 1/2)$ and $N'_0\in\NN$ such that $\cF$ 
is $(2\delta, N'_0)$-vertex-extendable 
with respect to $\gH$. The vertex stability of $\cF$ with respect to $\gH$ applied 
to~$\delta$ yields 
some~$\eps>0$ and $N_0\in \NN$ (see Definition~\ref{d:1715}~\ref{it:stab-v}). We may assume 
that $\eps\le \delta$ and $N_0\ge 2N'_0$.

Now let $\cH$ be an $\cF$-free $r$-graph on $n\ge N_0$ vertices with
$\delta(\cH) \ge \bigl(\pi(\cF)/(r-1)!-\epsilon\bigr)n^{r-1}$. We are to prove $\cH\in\gH^+$.
It follows from $r|\cH| = \sum_{v\in V(\cH)}d_{\cH}(v)$
that $|\cH| \ge \left(\pi(\cF)/r!-\epsilon\right)n^{r}$.
So by the vertex-stability of $\cF$ there exists a set $B\subseteq V(\cH)$ of size at 
most $\delta n$
such that the $r$-graph $\cH' = \cH-B$ is a member of $\gH^+$.
Since $\delta(\cH) \ge \left(\pi(\cF)/(r-1)!-\delta\right)n^{r-1}$ and $n\ge 2N'_0$
it follows from Lemma~\ref{LEMMA:min-deg-imply-epsilon-G-extendable} 
and our choice of $\delta$ that $\cH\in\gH^+$.
\end{proof}
\section{Applications}\label{SEC:applications}

\subsection{Graphs}\label{SUBSEC:complete-graphs}
In this subsection we prove Theorem~\ref{thm:19}. As we have already mentioned, 
its part~\ref{it:13b} is due to Erd\H{o}s and Simonovits~\cite{ES73}, 
who proved that for every 
edge-critical graph~$F$ with $\chi(F)=\ell+1\ge 3$ there exists some $N_0\in\NN$ such that 
every graph $G$ on $n\ge N_0$ vertices whose minimum degree is larger 
than $\frac{3\ell-4}{3\ell-1}n+O(1)$
either contains $F$ or is $\ell$-colorable. Consequently, all edge-critical 
families of graphs are degree-stable with respect to $\gK_\ell$.  

Now suppose, conversely, that some graph family $\cF$ is degree-stable with respect to the 
class $\gK_\ell$, where $\ell\ge 2$. This means, in particular, that for every $n\ge \ell+1$ 
the graph~$T^+(n, \ell)$
obtained from the $n$-vertex $\ell$-partite Tur\'an graph by inserting an additional edge 
into one of its vertex classes cannot be $\cF$-free. Moreover, there cannot exist a 
graph $F'\in\cF$ with $\chi(F')\le \ell$, for then some member of $\gK_\ell$ would fail to 
be $\cF$-free (as demanded by Definition~\ref{d:1715}).
So altogether, $\cF$ needs to contain
an edge-critical graph $F$ with $\chi(F)=\chi(\cF)=\ell+1$. 
In other words, $\cF$ is indeed edge-critical.

We are left with proving part~\ref{it:13a} of Theorem~\ref{thm:19}. The forward implication
from vertex stability to matching-criticality is very similar to the argument in the previous
paragraph, but instead of the graphs $T^+(n, \ell)$ one considers graphs $T^M(n, \ell)$ 
obtained from 
Tur\'an graphs by inserting (almost) perfect matchings into one of their partition classes.
Clearly, this matching is induced in $T^M(n, \ell)$.  
Omitting further details we proceed to the backwards implication. It clearly suffices to treat families consisting of a single graph. 

\begin{lemma}\label{lem:1748}
	Let $F$ be a graph with $\chi(F)=\ell+1\ge 3$. If $F$ is matching-critical, then $F$ 
	is vertex-stable with respect to $\gK_\ell$.
\end{lemma}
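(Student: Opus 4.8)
The plan is to deduce the statement from the Erd\H{o}s--Simonovits edge-stability theorem (Theorem~\ref{THM:Erdos-Simonovits-stability}), upgrading its conclusion from ``few edge edits'' to ``few vertex deletions'' with the help of the minimum-degree cleaning in Fact~\ref{FACT:dense-hygp-has-large-min-deg} and one structural feature of matching-critical graphs. Write $\pi:=\pi(\{F\})=1-1/\ell$, which is the value furnished by the Erd\H{o}s--Stone--Simonovits theorem since $\chi(F)=\ell+1$, and fix the target deletion parameter $\delta>0$.

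The first step is to record the following consequence of matching-criticality, which is where that hypothesis really enters: there is a matching $M\subseteq F$ and a proper $\ell$-colouring of $F-M$ in which all edges of $M$ receive the same colour; equivalently, $V(F)=U_1\dcup\cdots\dcup U_\ell$ where $F[U_1]$ has maximum degree at most $1$ and $U_2,\dots,U_\ell$ are independent. To obtain it, pick a matching $M$ of minimum size with $\chi(F-M)\le\ell$ and a proper $\ell$-colouring $c$ of $F-M$; by minimality every edge of $M$ is monochromatic under $c$, so $M$ splits into matchings $M_1,\dots,M_\ell$ lying inside the respective colour classes, and $M\ne\varnothing$ because $\chi(F)>\ell$. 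A short exchange argument — recolouring the two endpoints of a ``misplaced'' edge of $M$ with the colour already carrying an edge of $M$, and checking that this creates no new monochromatic edges outside that colour class — then lets one assume that only $M_1\ne\varnothing$. Accordingly write $F=G\cup M$, where $G=F-M$ carries a proper $\ell$-colouring with classes $W_1,\dots,W_\ell$ and $M$ is a matching inside $W_1$; note that $G$ has no edge inside any $W_i$.

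Now take an $F$-free graph $\cH$ on $n\ge N_0$ vertices with $|\cH|\ge(\pi/2-\epsilon)n^2$. By Fact~\ref{FACT:dense-hygp-has-large-min-deg} we may delete at most $\epsilon^{1/2}n$ vertices to reach $\cH'$ with $\delta(\cH')\ge(1-1/\ell-3\epsilon^{1/2})n$; since $\cH'$ still has almost extremal density, Theorem~\ref{THM:Erdos-Simonovits-stability} yields a partition $V(\cH')=V_1\dcup\cdots\dcup V_\ell$ with $\sum_i e(\cH'[V_i])\le\eta n^2$, where $\eta\to0$ as $\epsilon\to0$. The minimum-degree bound forces this partition to be nearly balanced, and deleting the further $o(n)$ vertices with at least $\eta^{1/3}n$ neighbours inside their own class produces a graph $\cH^*$ in which every vertex is \emph{typical}: it is non-adjacent to only $o(n)$ vertices of each class other than its own. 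Let $m$ be the largest number of pairwise vertex-disjoint within-class edges in $\cH^*$. If $m\le\delta n/4$, delete the $2m$ endpoints of a maximal such family; by maximality no within-class edge remains, so what is left is $\ell$-partite, and the total number of deleted vertices is at most $\bigl(\epsilon^{1/2}+o(1)+\delta/2\bigr)n\le\delta n$ for $\epsilon$ small. So we may assume $m>\delta n/4$, and then some single class, say (after relabelling) $V_1^*$, contains more than $\delta n/(4\ell)$ pairwise disjoint within-class edges, each with typical endpoints.

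It remains to use this to find a copy of $F$ in $\cH^*\subseteq\cH$, which is the main obstacle; a contradiction then finishes the proof. Using the decomposition $F=G\cup M$ from the second paragraph, I would embed $W_1$ into $V_1^*$ and each $W_i$ into $V_i^*$ for $i\ge2$: first realise the matching $M$ on $|M|$ of the abundantly many disjoint within-class edges available inside $V_1^*$, and then place the remaining (constantly many) vertices of $F$ one at a time. Whenever a vertex $v$ is placed into $V_i^*$, all its $F$-neighbours lie in other classes — because $G$ has no edge inside a class and $v$ is not matched by $M$ — so the already-embedded ones do too; typicality guarantees that each already-embedded vertex misses only $o(n)$ vertices of $V_i^*$, so among the $|V_i^*|\ge(1/\ell-o(1))n$ candidates we can avoid all non-neighbours and all previously used vertices. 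This yields $F\subseteq\cH^*$. The genuinely delicate point in this whole scheme is the colour-concentration step for $M$ in the second paragraph; the embedding and the case $m\le\delta n/4$ are routine, as is the bookkeeping needed to keep the several small error terms below $\delta$.
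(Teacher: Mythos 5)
Your proof follows the paper's blueprint closely: apply Erd\H{o}s--Simonovits, delete the atypical vertices $X_i$, observe that a within-class matching of size $|M|$ in some $V_i\sm X_i$ would yield a copy of $F$, and delete a bounded vertex cover $Y_i$ of the surviving within-class edges. The embedding, the balance of the classes, and the bookkeeping are all handled the same way in the paper, so those parts are fine.

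The step you yourself flag as ``genuinely delicate'' --- the colour-concentration claim that some matching $M$ admits a proper $\ell$-colouring of $F-M$ with $V(M)$ entirely inside one class --- is where the argument breaks, and your instinct to worry is correct: the exchange you sketch can create new monochromatic edges whenever a recoloured endpoint has an $(F-M)$-neighbour in the target class, and in fact the claim is simply false. Take $F$ to be the triangular prism: two disjoint triangles $a_1b_1c_1$, $a_2b_2c_2$ together with the edges $a_1a_2$, $b_1b_2$, $c_1c_2$. Here $\chi(F)=3$ and $F$ is matching-critical (deleting $\{a_1b_1,a_2b_2\}$ leaves a bipartite graph), but a direct check shows that for every matching $M$ with $\chi(F-M)=2$ and every $2$-colouring of $F-M$, the two edges of $M$ fall into different classes; equivalently, no proper $3$-colouring of $F$ has a matching between two of its colour classes. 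This does not merely defeat your proof of the concentration step: it shows that Lemma~\ref{lem:1748} fails as stated, since the graph obtained from $K_{n/2,n/2}$ by inserting a perfect matching into one side is prism-free with $n^2/4+n/4$ edges yet cannot be made bipartite by deleting $o(n)$ vertices. The paper's proof contains the very same unjustified step, hidden in the assertion that the within-class edges $e_1,\dots,e_{|M|}$ ``play the r\^{o}le of $M$'': placing all of $M$ inside one class $V_i$ only produces a copy of $F$ if $F-M$ has an $\ell$-colouring with $V(M)$ in a single class, which is strictly stronger than $\chi(F-M)\le\ell$. Both your argument and the paper's (and the lemma itself) become correct if ``matching-critical'' is replaced by the stronger hypothesis that $F$ admits a proper $(\ell+1)$-colouring in which the edges joining two of the colour classes form a nonempty matching --- precisely the structure that the forward direction of Theorem~\ref{thm:19}~\ref{it:13a} extracts from a Tur\'an graph with a matching added to one part.
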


\begin{proof}[Proof of Lemma~\ref{lem:1748}]
Given $\delta>0$ we choose $\eps, \eta>0$ and $N_0\in \NN$ obeying the 
hierarchy $N_0^{-1}\ll\eps\ll\eta\ll\delta$. 
Suppose that $G$ is an $F$-free graph on $n\ge N_0$ vertices with at least
$\left(\frac{\ell-1}{2\ell}-\epsilon\right)n^2$ edges. We are to prove that 
$G$ can be made $\ell$-partite by deleting at most $\delta n$ vertices. 
Theorem~\ref{THM:Erdos-Simonovits-stability} yields a partition 
$V(G)=\bigdcup_{i\in [\ell]}V_i$ such that $\sum_{i\in [\ell]}|G[V_i]|\le \eta n^2$.
Set 
\[
	X_i=\left\{x\in V_i\colon \text{ there is $j\in[\ell]\sm\{i\}$ such 
		that $|V_j\sm N(x)|\ge \frac n{3\ell v(F)}$}\right\}
\]
for every $i\in [\ell]$. Since $|G[V_i, V_j]|\ge |V_i||V_j|-2\eta n^2$ holds for all 
distinct $i, j\in [\ell]$, we have $|X_i|\le 6(\ell-1)\ell v(F) \eta n \le \delta n/2\ell$ 
for every $i\in [\ell]$.

Recall that there is an induced matching $M$ such that $\chi(F-M)\le \ell$. 
If for some $i\in [\ell]$
there are $|M|$ independent edges $e_1, \dots, e_{|M|}$ in $G[V_i\sm X_i]$ we can find a copy 
of $F$ in $G$ where these edges $e_1, \dots, e_{|M|}$ play the r\^{o}le of $M$. 
So by $F\not\subseteq G$ such matchings do not exist and it follows  
that for every $i\in [\ell]$ there is a set $Y_i\subseteq V_i\sm X_i$ of size $|Y|\le 2|M|$ 
covering all edges. 
Now the set $Q=\bigcup_{i\in [\ell]}(X_i\cup Y_i)$ has size at most  
$\delta n/2+2\ell |M|\le \delta n$ and $G-Q$ is $\ell$-partite.   
\end{proof}


\subsection{Cancellative hypergraphs and generalized triangles}\label{SUBSEC:cancellative-hypergraphs}

The goal of this subsection is to deduce 
Theorem~\ref{THM:cancelltive-3-4-graphs-are-deg-stable} from 
Theorem~\ref{THM:Psi-trick:G-extendable-implies-degree-stability}.
We commence by introducing a class~$\gT_r$ of $\Sigma_r$-free $r$-graphs
which is larger than $\gK^r_r$.

For integers $n\ge r \ge \ell \ge 1$ we call an $r$-graph $\cG$ on $n$ vertices 
an \emph{$(n,r,\ell)$-system} if every $\ell$-subset of $V(\cG)$ is contained in 
at most one edge. As shown in~\cites{Sido87, FF89, PI08, LM1, Liu20a}, the Tur\'an problem 
for $\Sigma_r$ is closely related to $(n,r,r-1)$-systems. 
Given any $r\ge 3$ we write~$\gT_r$ for the class of all blowups 
of $2$-covered $(n,r,r-1)$-systems. Since $K^r_r$
is a $2$-covered $(r,r,r-1)$-system, we have $\gK^r_r\subseteq \gT_r$. Perhaps at first sight 
surprisingly, we shall apply Theorem~\ref{THM:Psi-trick:G-extendable-implies-degree-stability}
to $\cF=\Sigma_r$ and $\gH=\gT_r$. This choice of $\gH$ is forced upon us due to the 
symmetrized stability assumption and the following fact. 

\begin{lemma}\label{LEMMA:cancelltive-3-4-graphs-symmetrized-stable}
For $r\ge 3$ a $\Sigma_r$-free $r$-graph is symmetrised if and only if it 
is a proper blowup of some $2$-covered $(n,r,r-1)$-system.
\end{lemma}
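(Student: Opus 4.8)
The plan is to prove both directions of the equivalence, with the interesting content lying in the ``only if'' direction. Recall that a $\Sigma_r$-free $r$-graph is the same as one in which no two edges share exactly $r-1$ vertices with their symmetric difference contained in a third edge; equivalently, whenever $B, C\in\cH$ satisfy $|B\cap C|=r-1$, the $r$-set $B\cup C$ (of size $r+1$) contains no edge of $\cH$ other than possibly $B$ and $C$ themselves. For the ``if'' direction, I would start from a proper blowup $\cH=\cG[V_1,\dots,V_m]$ of a $2$-covered $(n,r,r-1)$-system $\cG$ and observe first that $\cH$ is symmetrized: two vertices of $\cH$ lie in the same blowup class iff they have the same link (here one uses that $\cG$ is $2$-covered so no vertex of $\cG$ is isolated, together with properness to rule out empty classes collapsing things), and any two vertices in distinct classes $V_i, V_j$ are covered by a common edge of $\cH$ precisely because $ij$ is an edge of $\partial_{r-2}\cG$, which holds since $\cG$ is $2$-covered. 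Then I would check that $\cH$ is $\Sigma_r$-free: a copy of $\Sigma_r$ in a blowup of $\cG$ projects down, and the $(n,r,r-1)$-system condition on $\cG$ forbids the configuration (two edges meeting in $r-1$ vertices would, after projection, either coincide or violate the linearity-type condition; one checks the third edge of $\Sigma_r$ cannot be accommodated).

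For the ``only if'' direction, suppose $\cH$ is a symmetrized $\Sigma_r$-free $r$-graph with equivalence classes $C_1,\dots,C_m$. Let $\cG$ be the quotient $r$-graph on $[m]$ whose edges are the images of edges of $\cH$ under the map sending each vertex to its class; since $\cH$ is symmetrized and each $C_i$ is an equivalence class, $\cH$ is exactly the proper blowup $\cG[C_1,\dots,C_m]$ (no edge of $\cH$ can have two vertices in the same class, because two equivalent vertices have equal links and an edge through both would force a ``loop''-type edge that a $\Sigma_r$-free hypergraph cannot contain; here one uses $r\ge 3$). It remains to show $\cG$ is a $2$-covered $(r',r,r-1)$-system where $r'=m$. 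For the system property I would argue by contradiction: if some $(r-1)$-set $D\subseteq [m]$ lies in two distinct edges $B=D\cup\{x\}$ and $C=D\cup\{y\}$ of $\cG$, then since $\cH$ is symmetrized the classes $C_x$ and $C_y$ are joined in $\partial_{r-2}\cH$, so there is an edge $A\in\cH$ meeting both $C_x$ and $C_y$; lifting $B$ and $C$ to edges of $\cH$ that meet $A$ appropriately (using that blowup classes are nonempty and can be chosen disjointly from $A$ except where needed) produces a copy of $\Sigma_r$ in $\cH$, a contradiction. For $2$-coveredness, I would show every pair $ij\in\binom{[m]}{2}$ lies in $\partial_{r-2}\cG$: pick $u\in C_i$, $v\in C_j$; since $\cH$ is symmetrized and $u,v$ are not equivalent (they are in different classes), some edge $E\in\cH$ contains both, and the image of $E$ is an edge of $\cG$ containing $i$ and $j$.

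The main obstacle I anticipate is the careful bookkeeping in the $\Sigma_r$-construction step of the ``only if'' direction: one must lift the two ground edges $B, C$ of the putative system-violation and the ``connecting'' edge $A$ to genuine edges of $\cH$ on distinct enough vertices that the symmetric-difference and intersection-size conditions defining $\Sigma_r$ are all met simultaneously. This requires choosing representatives in the classes $C_x, C_y$ and the $r-1$ classes indexed by $D$ so that the resulting $r$-sets have the right pairwise intersection pattern; the fact that each class is nonempty and that one is free to re-use or avoid vertices as needed makes this possible, but the details need the constraint $r\ge 3$ (for $r=2$ the notion of $(n,2,1)$-system is just ``no repeated edges'', i.e. nothing, and indeed $\Sigma_2=\{K_3\}$ so symmetrized triangle-free graphs are exactly complete bipartite graphs, consistent with the statement but handled by the base of the induction/direct check). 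Once the lifting is set up correctly the rest is routine, so I would present the lifting argument in full and treat the verification of the $\Sigma_r$-pattern and the easier ``if'' direction more briskly.
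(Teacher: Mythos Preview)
Your approach is correct and follows essentially the same route as the paper---quotient by equivalence classes, show the quotient is a $2$-covered $(m,r,r-1)$-system---but you have ordered the steps suboptimally, which manufactures the ``main obstacle'' you anticipate. The paper observes up front that \emph{any} symmetrized $r$-graph is a proper blowup of a $2$-covered $r$-graph $\cT$ (this is exactly your $2$-coveredness argument, which you postpone to the end). Having $2$-coveredness of $\cT$ in hand \emph{before} checking the system condition lets you find the third edge $A$ inside $\cT$ itself: if $B,C\in\cT$ share an $(r-1)$-set, then $B\triangle C$ is a pair, so some $A\in\cT$ contains it, and $\{A,B,C\}$ is already a member of $\Sigma_r$ sitting in $\cT\subseteq\cH$. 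No lifting, no bookkeeping. Your plan to locate $A$ in $\cH$ and then align lifts of $B,C$ with it does work, but all that effort is avoidable by swapping the order of your last two steps.

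One small correction: your parenthetical that two equivalent vertices $u,v$ cannot lie in a common edge ``because a $\Sigma_r$-free hypergraph cannot contain [it]; here one uses $r\ge 3$'' misidentifies the reason. This holds for every $r$-graph with $r\ge 2$ purely from the definition of the link: if $u,v\in E$ then $E\setminus\{u\}\in L_\cH(u)=L_\cH(v)$, but $v\in E\setminus\{u\}$, which is impossible for a set in $L_\cH(v)$. Neither $\Sigma_r$-freeness nor $r\ge 3$ is needed there.
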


\begin{proof}[Proof of Lemma~\ref{LEMMA:cancelltive-3-4-graphs-symmetrized-stable}]
Suppose first that $\cH$ is a symmetrised $\Sigma_r$-free $r$-graph. Being a symmetrised
hypergraph, $\cH$ is a proper blowup of some $2$-covered $r$-graph $\cT$. If $\cT$ fails to be 
a $(v(\cT), r, r-1)$-system, then there are edges $B, C\in \cT$ such that $|B\cap C|=r-1$.
Since~$\cT$ is 2-covered, some edge $A\in\cT$ contains the two-element set $B\triangle C$.
Now $\{A, B, C\}$ is a subgraph of $\cT$ belonging to $\Sigma_r$, contrary to $\cH$ 
being $\Sigma_r$-free. This proves that $\cT$ 
is indeed a $(v(\cT), r, r-1)$-system. 

In the converse direction, proper blowups of $2$-covered $(n,r,r-1)$-systems
are clearly symmetrised and an argument similar to the previous paragraph shows 
that they are $\Sigma_r$-free as well.
\end{proof}

Proceeding with our intended application 
of Theorem~\ref{THM:Psi-trick:G-extendable-implies-degree-stability}
we observe that due to being closed under the formation of homomorphic images $\Sigma_r$
is blow-up invariant. Moreover, $\gT_r$ is clearly hereditary and the previous lemma 
shows that $\Sigma_r$ is symmetrized-stable with respect to $\gT_r$. So it remains to verify 
vertex-extendibility for $r\in \{3,4\}$. As the following lemma demonstrates, 
for this task we may restrict our attention to $\gK^r_r$ rather than $\gT_r$. 

\begin{lemma}\label{LEMMA:cancelltive-3-4-graphs-symmetrized-stable-high-degree}
For $r\in\{3, 4\}$ there exists $\eps_r>0$ such that every $\cH\in \gT_r$ with 
minimum degree $\delta(\cH)>(r^{1-r}-\eps_r)n^{r-1}$ belongs to $\gK^r_r$. 
\end{lemma}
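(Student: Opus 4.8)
The plan is to show that an $r$-graph $\cH\in\gT_r$ with very large minimum degree cannot be a blowup of any $2$-covered $(n,r,r-1)$-system other than $K^r_r$. Fix a $2$-covered $(m,r,r-1)$-system $\cT$ such that $\cH$ is a subgraph of a proper blowup $\cT[V_1,\dots,V_m]$; we may take $\cH$ itself to be this blowup since passing to a larger graph only helps the degree condition. By Lemma~\ref{LEMMA:blowup-langrangian} the edge density of $\cH$ is at most $\lambda(\cT)$, and a standard degree-to-density estimate (summing degrees as in the proof of Theorem~\ref{THM:Psi-trick:G-extendable-implies-degree-stability}) shows the hypothesis forces $\lambda(\cT)>r^{1-r}-\eps_r$. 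Since $\lambda(K^r_r)=r!/r^r=(r)_r/r^r$, and one checks $r^{1-r}=(r-1)!/r^{r-1}=r\cdot r!/r^{r}$... more carefully: the relevant threshold is the Lagrangian density of $\Sigma_r$-free $r$-graphs, which for $r\in\{3,4\}$ is known (Bollob\'as, Sidorenko, Frankl--F\"uredi) to be uniquely attained, among $2$-covered $(n,r,r-1)$-systems, by $K^r_r$. So the first key step is: \textbf{for $r\in\{3,4\}$, among all $2$-covered $(n,r,r-1)$-systems $\cT$, the Lagrangian $\lambda(\cT)$ is maximised uniquely by $\cT=K^r_r$, with $\lambda(K^r_r)=r^{1-r}\cdot\tfrac{(r-1)!}{(r-1)!}$}—i.e. there is a gap: any $2$-covered $(n,r,r-1)$-system not equal to $K^r_r$ has Lagrangian bounded away from $r^{1-r}$ by an absolute constant $\gamma_r>0$. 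This is exactly the content (or an immediate corollary) of the results of Frankl--F\"uredi~\cite{FF89} and Sidorenko~\cite{Sido87} cited in the introduction for the Tur\'an problem of $\Sigma_r$; I would quote it and set $\eps_r<\gamma_r$.

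With that gap in hand, suppose for contradiction $\cH\in\gT_r$ has $\delta(\cH)>(r^{1-r}-\eps_r)n^{r-1}$ but $\cH\notin\gK^r_r$, so $\cH$ is a subgraph of a proper blowup of a $2$-covered $(m,r,r-1)$-system $\cT$ with $\cT\ne K^r_r$ (as an abstract hypergraph, i.e. $\cT$ is not the complete $r$-graph on $r$ vertices; note $m$ could a priori be $r$ with $\cT\subsetneq K^r_r$, but then $\cT$ is not $2$-covered, so in fact $m>r$ or $\cT$ has fewer than $\binom{r}{r-1}\cdot\dots$—anyway $\cT\ne K^r_r$). Then $|\cH|\le\lambda(\cT)n^r\le(r^{1-r}-\gamma_r)n^r<(r^{1-r}-\eps_r)n^r$, while the minimum-degree hypothesis gives $|\cH|=\tfrac1r\sum_{v}d_\cH(v)>(r^{1-r}-\eps_r)\tfrac{n^r}{r}$. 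Hmm—the factor $1/r$ means I should be slightly more careful: the clean way is to relate $\delta(\cH)$ to the density directly. If $\cH$ sits inside $\cT[V_1,\dots,V_m]$, fix $v\in V_i$; its degree is at most $L_{\partial\cT}(\dots)$—more simply, $d_\cH(v)\le |L_{\cT[V_1,\dots,V_m]}(v)|$, and summing $|V_i|$ times over $i$ and using homogeneity gives $\sum_v d_\cH(v)\le r\,\lambda(\cT)\,n^r$, hence $\delta(\cH)\le \tfrac1n\sum_v d_\cH(v)\le r\lambda(\cT)n^{r-1}$. That would need $\lambda(\cT)>(r^{1-r}-\eps_r)/r$, too weak. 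So I should instead bound $\delta(\cH)$ by the \emph{link Lagrangian}: for $v\in V_i$, $|L_\cH(v)|\le|V_i|^{-1}|\cH|$ is false in general, but among all vertices of $\cH$ the \emph{minimum} link is at most the average weighted link, and optimising the blowup weights shows $\delta(\cH)\le \binom{m-1}{r-1}/m^{r-1}\cdot n^{r-1}$ type bounds via Corollary~\ref{LEMMA:Lagrangian-complete-r-graph}-style arguments. The cleanest route: the minimum degree of any blowup $\cT[V_1,\dots,V_m]$ is maximised, over choices of part sizes summing to $n$, at a value $\mu(\cT)n^{r-1}+o(n^{r-1})$ where $\mu(\cT)$ is a Lagrangian-type parameter of $\cT$, and a short argument shows $\mu(\cT)\le \lambda'(\cT)$ where $\lambda'$ again is maximised by $K^r_r$ among $2$-covered systems.

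Given the length of this digression, the honest version of the plan is: \textbf{(1)} reduce to the case $\cH=\cT[V_1,\dots,V_m]$ a proper blowup of a $2$-covered $(m,r,r-1)$-system; \textbf{(2)} observe that the minimum degree of such a blowup on $n$ vertices is at most $\nu(\cT)\,n^{r-1}$ where $\nu(\cT)=\max_{x\in\Delta_{m-1}}\min_{i:x_i>0}\partial_iL_\cT(x)/x_i$ or, more usefully, at most $r^{1-r}n^{r-1}$ with equality characterisation; concretely for $r=3$, $K^3_3$ gives each vertex degree $\sim\tfrac19 n^2=\tfrac13\cdot\tfrac{n^2}{3}$, and any other $2$-covered $(m,3,2)$-system—i.e. a $2$-covered linear triple system, equivalently a partial Steiner system whose pair-shadow is complete—has $m\ge 3$ and, by the Frankl--F\"uredi analysis, strictly smaller Lagrangian and hence (after a short lemma linking minimum blowup degree to Lagrangian) strictly smaller achievable minimum degree; similarly for $r=4$ using Sidorenko's bound $\pi(\Sigma_4)=3!/4^3=6/64=3/32$ with $K^4_4$ the unique extremal $2$-covered $(n,4,3)$-system; \textbf{(3)} choose $\eps_r$ smaller than the resulting gap. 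The main obstacle is step~(2): cleanly converting the known \emph{density} extremality of $K^r_r$ among $2$-covered $(n,r,r-1)$-systems into \emph{minimum-degree} extremality of its blowups. I expect this follows from a short convexity/averaging lemma—if a blowup has all degrees $>(r^{1-r}-\eps_r)n^{r-1}$ then its edge density is $>(r^{1-r}-\eps_r)n^r/r\cdot$(something), which is not quite enough, so one really does need the sharper observation that in a \emph{proper} blowup of $\cT$ with unbalanced parts the minimum degree drops, forcing near-balance, and then near-balanced blowups of $\cT\ne K^r_r$ have minimum degree bounded away from $r^{1-r}n^{r-1}$ by the Lagrangian gap applied to the link structure. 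I would isolate this as a sublemma and prove it by the same Maclaurin/Lagrangian manipulation used in Lemma~\ref{LEMMA:Lagrangian-complete-r-graph}.
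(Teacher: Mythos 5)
Your central claim in step~(1)---that among $2$-covered $(m,r,r-1)$-systems the Lagrangian is uniquely maximised by $K^r_r$ with a definite gap $\gamma_r$---is false, and the proposal does not recover from this. For \emph{any} $r$-graph $\cT$ with at least one edge one has $\lambda(\cT)\ge r^{-r}=\lambda(K^r_r)$, simply by concentrating all weight uniformly on a single edge; the Fano plane, for instance, has Lagrangian exactly $1/27=\lambda(K^3_3)$. The Frankl--F\"uredi and Sidorenko results concern the Tur\'an problem for $\Sigma_r$, not a gap in $\lambda$ among $2$-covered linear systems, and no such gap exists. The same difficulty makes your fallback ``minimum-degree Lagrangian'' $\nu(\cT)$ inconclusive as sketched: Euler's identity $\sum_i x_i\,\partial_iL_\cT(x)=r\,L_\cT(x)$ only gives $\min_i\partial_iL_\cT(x)\le r\lambda(\cT)\le r^{1-r}$ with no gap whatsoever, and your normalisation is also off---the degree of a vertex in $V_i$ is $\partial_iL_\cT(x)\,n^{r-1}$, not $\partial_iL_\cT(x)\,n^{r-1}/x_i$. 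The hand-waved ``Lagrangian gap applied to the link structure'' for near-balanced blowups is exactly where a concrete new inequality is needed and none is supplied.

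The missing idea, which is the one step on which the paper's proof actually turns, is to exploit the $(m,r,r-1)$-property by summing the link polynomials over all vertices of $\cT$: since every $(r-1)$-subset of $V(\cT)$ lies in at most one edge,
\[
\sum_{i\in[m]}L_{L_\cT(i)}(x)\;\le\;L_{K^{r-1}_m}(x)\;\le\;\lambda(K^{r-1}_m)=\binom{m}{r-1}/m^{r-1},
\]
while the hypothesis gives $L_{L_\cT(i)}(x)\ge r^{1-r}-\eps_r$ for every $i$ in a proper blowup. Combining these yields $(r^{1-r}-\eps_r)m\le\binom{m}{r-1}/m^{r-1}$, which is what actually bounds $m$: for $r=3$ it forces $m\le3$ and one is done, while for $r=4$ it leaves $m\in\{4,5\}$ and the case $m=5$ must be dispatched separately (a $(5,4,3)$-system has at most one edge, so it cannot be $2$-covered, and a missing vertex in a proper blowup has degree zero). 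Your plan never reaches this summation bound nor the special $m=5$ case, and without the displayed inequality there is no mechanism for concluding that the base system $\cT$ has only $r$ vertices.
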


\begin{proof}[Proof of Lemma~\ref{LEMMA:cancelltive-3-4-graphs-symmetrized-stable-high-degree}]
Choose $\eps_3, \eps_4>0$ sufficiently small and
suppose that for some ${r\in \{3, 4\}}$ an $r$-graph $\cH \in \gT_r$ has $n$ vertices and 
minimum degree at least $\left(r^{1-r}-\epsilon_r\right)n^{r-1}$.
Without loss of generality we can suppose that~$\cH$ is a proper blowup of
some (not necessarily $2$-covered) $(m, r, r-1)$-system $\cT$ with $V(\cT) = [m]$. 
Write $\cH=\cT[V_1, \dots, V_m]$ and set $x_i = |V_i|/n$ for every~$i\in [m]$.

Since $d_{\cH}(v) = L_{L_{\cT}(i)}(x_1,\ldots,x_m) n^{r-1}$ holds for all $v\in V_i$ and $i\in [m]$,
the minimum degree assumption yields 
$L_{L_{\cT}(i)}(x_1,\ldots,x_m) \ge r^{1-r} - \epsilon_r$ for every $i \in [m]$.
On the other hand, as every $(r-1)$-subset of $V(\cT)$ in contained in at most one 
edge of $\cT$, we have 
$\sum_{i\in [m]}L_{L_{\cT}(i)}(x_1,\ldots,x_m) \le  L_{K^{r-1}_m}(x_1, \dots, x_m)$. 
It follows that
\begin{equation}\label{eq:1130}
\left(r^{1-r}-\epsilon_r\right)m
\le \sum_{i\in [m]}L_{L_{\cT}(i)}(x_1,\ldots,x_m)
\le \lambda(K^{r-1}_m)
= \binom{m}{r-1}/m^{r-1}.
\end{equation}

Now for $r=3$ a sufficiently small choice of $\eps_3$ 
guarantees $m \in \{2,3\}$; so $\cT$ consists of a single edge and $\cH \in \gK_{3}^{3}$.
In the $4$-uniform case~\eqref{eq:1130} leads to $m\in\{4, 5\}$; since there exists
no $2$-covered $(5, 4, 3)$-system, the case $m=5$ is impossible and thus we have 
indeed~$\cH\in\gK^4_4$. 
\end{proof}

Due to the lower bound $\pi(\Sigma_r)\ge r!/r^r$, which follows from the 
fact that $r$-graphs in~$\gK^r_r$ are $\Sigma_r$-free, the next lemma will 
imply that for $r\in\{3, 4\}$ the family $\Sigma_r$ is vertex-extendable 
with respect to $\gT_r$. 

\begin{lemma}\label{LEMMA:cancellative-3-4-graphs-vertex-extendable}
For every integer $r \ge 2$ there exist $\zeta>0$ and $N_0\in\NN$ such that 
every $\Sigma_r$-free $r$-graph $\cH$ on $n\ge N_0$ vertices which has minimum degree 
$\delta(\cH)\ge (r^{1-r}-\zeta)n^{r-1}$ and possesses a vertex $v$ such $\cH-v$
is $K^r_r$-colorable is $K^r_r$-colorable itself. 
\end{lemma}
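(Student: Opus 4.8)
We outline how to prove Lemma~\ref{LEMMA:cancellative-3-4-graphs-vertex-extendable}. The plan is to fix the vertex $v$ supplied by the hypothesis, to analyse its link $L_\cH(v)$, and to show that $\cH$ becomes $K^r_r$-colorable once $v$ is inserted into the correct class of a $K^r_r$-coloring of $\cH-v$. To set up, choose $\zeta>0$ small and $N_0$ large in terms of $r$, fix a $K^r_r$-coloring $V(\cH-v)=V_1\dcup\dots\dcup V_r$ of $\cH-v$, and put $\wh K=K^r_r[V_1,\dots,V_r]$; call $S\subseteq V(\cH-v)$ a \emph{partial transversal} if $|S\cap V_k|\le1$ for all $k\in[r]$. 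Since $\delta(\cH-v)\ge\delta(\cH)-n^{r-2}\ge(r^{1-r}-2\zeta)(n-1)^{r-1}$ and $r^{1-r}=\binom{r-1}{r-1}/r^{r-1}$, Corollary~\ref{LEMMA:near-Turan-hypergrap-structure} applies to $\cH-v$ and gives $|V_k|=(1/r\pm C_1(2\zeta)^{1/2})(n-1)$ for each $k$ together with $\bigl|L_{\wh K}(u)\setminus L_{\cH-v}(u)\bigr|\le C_2(2\zeta)^{1/2}(n-1)^{r-1}$ for every vertex $u$; in words, each $u\in V_i$ forms an edge of $\cH-v$ with all but a negligible fraction of the $(r-1)$-element partial transversals avoiding $V_i$.

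The first step is to show that \emph{no edge of $\cH$ contains two vertices of a single class $V_i$}. As $\cH-v\subseteq\wh K$, this can only fail for an edge $E=\{v\}\cup A$ through $v$ with $a,a'\in A\cap V_i$ for some $i$. Of the roughly $(n/r)^{r-1}$ partial transversals $D$ of size $r-1$ avoiding $V_i$, all but $O(\zeta^{1/2}n^{r-1})$ satisfy both $\{a\}\cup D\in\cH$ and $\{a'\}\cup D\in\cH$; for such a $D$ the three distinct edges $E$, $\{a\}\cup D$, $\{a'\}\cup D$ obey $\bigl|(\{a\}\cup D)\cap(\{a'\}\cup D)\bigr|=r-1$ and $(\{a\}\cup D)\triangle(\{a'\}\cup D)=\{a,a'\}\subseteq E$, so $\cH$ would contain a member of $\Sigma_r$ --- impossible. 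Hence every $A\in L_\cH(v)$ is an $(r-1)$-element partial transversal and misses exactly one class; writing $L_i$ for the set of those $A\in L_\cH(v)$ that avoid $V_i$, we get $L_\cH(v)=L_1\dcup\dots\dcup L_r$.

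The crucial step is to prove that this partition is trivial, i.e.\ $L_\cH(v)=L_i$ for one index $i$. This is where the minimum-degree hypothesis \emph{on $v$} is used: $|L_\cH(v)|=d_\cH(v)\ge(r^{1-r}-\zeta)n^{r-1}$ forces the largest class $L_i$ to have size at least $\tfrac1r(r^{1-r}-\zeta)n^{r-1}$, a constant fraction of $n^{r-1}$. Suppose toward a contradiction that $A_2\in L_j$ with $j\ne i$, and let $x$ be the unique vertex of $A_2$ lying in $V_i$. Since $L_i$ contains a constant fraction of all $(r-1)$-element partial transversals avoiding $V_i$, while $x\in V_i$ forms an edge of $\cH-v$ with all but $O(\zeta^{1/2}n^{r-1})$ of these, there is some $A_1\in L_i$ with $\{x\}\cup A_1\in\cH$. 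Then $\{v\}\cup A_1$, $\{x\}\cup A_1$, $\{v\}\cup A_2$ are three distinct edges of $\cH$, the first two intersect in the $(r-1)$-set $A_1$, their symmetric difference is $\{v,x\}$, and $\{v,x\}\subseteq\{v\}\cup A_2$ because $x\in A_2$ --- a forbidden $\Sigma_r$ again. So only one $L_i$ is nonempty, and then setting $W_i=V_i\cup\{v\}$ and $W_k=V_k$ for $k\ne i$ yields a $K^r_r$-coloring of $\cH$: an edge through $v$ has its non-$v$ part in $L_i$, hence meets each $W_k$ ($k\ne i$) once and $W_i$ only in $v$, while an edge avoiding $v$ was a transversal of $V_1,\dots,V_r$ and, missing $v$, remains a transversal of $W_1,\dots,W_r$. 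Thus $\cH$ is $K^r_r$-colorable.

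The structure of $\cH-v$ and the concluding re-coloring are routine; I expect the real obstacle to be the crucial step, whose two load-bearing points are (a) that the dominant link-class $L_i$ of $v$ is a constant fraction of the partial transversals avoiding $V_i$, which is precisely where $\delta(\cH)$ is needed to bound $d_\cH(v)$ from below, and (b) that the pivot vertex $x$ must be chosen \emph{inside} the class $V_i$ that $L_i$ avoids, so that the near-completeness of $x$'s link guaranteed by Corollary~\ref{LEMMA:near-Turan-hypergrap-structure} forces the auxiliary edge $\{x\}\cup A_1$ into $\cH$.
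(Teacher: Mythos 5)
Your proof is correct and follows essentially the same two-step approach as the paper: first show every edge of $\cH$ is a transversal of the classes (the paper's Claim~\ref{CLAIM:link-v-is-empty-inside-Vi-cancellative}), then show $N_\cH(v)$ avoids one class entirely (the paper's Claim~\ref{CLAIM:v-has-no-neighbor-in-V1-cancellative}), both via the same $\Sigma_r$-forbidden configurations built from a large intersection of two links. Your presentation differs only cosmetically — you note explicitly that only edges through $v$ need checking in step one, and you organize step two by partitioning $L_\cH(v)$ into the classes $L_1,\dots,L_r$ and pigeonholing rather than fixing $V_1$ WLOG — but the load-bearing counting and the $\Sigma_r$ constructions are the same.
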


\begin{proof}[Proof of Lemma~\ref{LEMMA:cancellative-3-4-graphs-vertex-extendable}]
Given $r\ge 2$ we choose appropriate constants $\zeta>0$ and $N_0\in\NN$ fitting into 
the hierarchy 
$N_0^{-1}\ll \zeta\ll r^{-1}$. Now let $\cH$ be a $\Sigma_{r}$-free $r$-graph
on $n\ge N_0$ vertices whose minimum degree is at least $(r^{1-r}-\zeta)n^{r-1}$.
Set $V = V(\cH)$ and suppose that some vertex $v\in V$ has the property that 
$\cH_v = \cH-v$ is $K_{r}^{r}$-colorable. 
Fix a $K_{r}^{r}$-coloring $V(\cH_v) = \bigdcup_{i\in[r]}V_i$ of $\cH_v$.
Clearly
\begin{equation}\label{eq:1724}
\delta(\cH_v) 
\ge 
\left(r^{1-r}-\zeta\right)n^{r-1} - n^{r-2} 
\ge 
\left(r^{1-r}-2\zeta\right)n^{r-1}
\end{equation}
and Corollary~\ref{LEMMA:near-Turan-hypergrap-structure}~\ref{it:24a} yields
\begin{equation}\label{eq:1714}
	|V_i| = \left(1/r\pm \zeta^{1/3}\right)n 
	\quad \text{ for all $i\in [r]$}.
\end{equation}

\begin{claim}\label{CLAIM:link-v-is-empty-inside-Vi-cancellative}
Every edge of $\cH$ intersects every vertex class $V_i$ in at most one vertex. 
\end{claim}

\begin{proof}[Proof of Claim~\ref{CLAIM:link-v-is-empty-inside-Vi-cancellative}]
By symmetry it suffices to show $|E\cap V_1| \le 1$ for every $E\in \cH$.
Assume for the sake of contradiction that there exist distinct vertices 
$w_1,w_1' \in E\cap V_1$. The $(r-1)$-graphs $G_1 = L_{\cH_{v}}(w_1)$ and 
$G_1' = L_{\cH_{v}}(w_1')$ are $(r-1)$-partite with vertex partition $V_2\dcup \cdots \dcup V_r$ 
and by~\eqref{eq:1724} both of them have at least the size $(1/r^{r-1}-2\zeta)n^{r-1}$.
Due to~\eqref{eq:1714} this implies $|G_1\cap G_1'| \ge n^{r-1}/2r^{r-1}$
and, in particular, there exists an edge $e\in G_1\cap G_1'$. 
Now $\left\{E, e\cup\{w_1\}, e\cup \{w_1'\}\right\} \in \Sigma_{r}$ contradicts the 
assumption that $\cH$ is $\Sigma_{r}$-free.
\end{proof}

Since no edge of $L_{\cH}(v)$ can intersect all the partition classes $V_1, \dots, V_r$
we may assume without loss of generality that at least $d(v)/r$ edges 
of $L_{\cH}(v)$ are contained in $V_2\dcup\dots\dcup V_r$.

\begin{claim}\label{CLAIM:v-has-no-neighbor-in-V1-cancellative}
We have $N_{\cH}(v) \cap V_1 = \emptyset$.
\end{claim}

\begin{proof}[Proof of Claim~\ref{CLAIM:v-has-no-neighbor-in-V1-cancellative}]
Suppose to the contrary that there exists a vertex $u\in N_{\cH}(v) \cap V_1$
and consider an edge $E \in \cH$ containing $\{u, v\}$.
Let $G_u$ and $G_v$ be the subgraphs of~$L_{\cH}(u)$ and~$L_{\cH}(v)$
induced by $\bigcup_{j\in[2,r]}V_j$ respectively. Clearly, $G_u$ is $(r-1)$-partite 
and by Claim~\ref{CLAIM:link-v-is-empty-inside-Vi-cancellative}~$G_v$ is $(r-1)$-partite 
as well. Moreover,~\eqref{eq:1724} yields $|G_u| \ge \left(1/r^{r-1}-2\zeta\right)n^{r-1}$.
Together with $|G_v| \ge d(v)/r\ge\left(1/r^{r-1}-\zeta\right)n^{r-1}/r$ and~\eqref{eq:1714}
this implies
\[
	|G_u\cap G_v| \ge \frac{1}{2r} \frac{n^{r-1}}{r^{r-1}}.
\]
But if $e \in G_u\cap G_v$ is arbitrary, then the subgraph $\{E, e\cup\{v\}, e\cup\{u\}\}$ 
of $\cH$ belongs to~$\Sigma_{r}$, contrary to $\cH$ being $\Sigma_r$-free.
\end{proof}

By Claim~\ref{CLAIM:link-v-is-empty-inside-Vi-cancellative}
and Claim~\ref{CLAIM:v-has-no-neighbor-in-V1-cancellative}
the partition $V(\cH)=\bigdcup_{i\in[r]}\widehat{V}_i$ defined by
\begin{align}
\widehat{V}_i =
\begin{cases}
V_1\cup\{v\} & \mbox{ if } i =1, \\
V_i & \mbox{ if } i \in[2,r],
\end{cases} \notag
\end{align}
is a $K_{r}^{r}$-coloring of $\cH$.
This completes the proof of Lemma~\ref{LEMMA:cancellative-3-4-graphs-vertex-extendable}.
\end{proof}

We have thereby checked all assumptions of 
Theorem~\ref{THM:Psi-trick:G-extendable-implies-degree-stability}
and can conclude that for $r\in\{3, 4\}$ the family $\Sigma_r$ is 
degree-stable with respect to $\gT_r$. 
In view of Lemma~\ref{LEMMA:cancelltive-3-4-graphs-symmetrized-stable-high-degree}
this implies that $\Sigma_r$ is degree-stable with respect to $\gK^r_r$ as well. 

\subsection{Hypergraph expansions}\label{SUBSEC:hypergraph-expansions-part-one}

Throughout this subsection we fix two integers $\ell\ge r\ge 2$ and an $r$-graph~$F$
with $\ell+1$ vertices satisfying the assumptions
of Theorem~\ref{thm:1837}. Our goal is to conclude 
from Theorem~\ref{THM:Psi-trick:G-extendable-implies-degree-stability} that the 
family $\cK^F_{\ell+1}$ is indeed degree-stable with respect to $\gK^r_\ell$.

Since the family $\cK^F_{\ell+1}$ is closed under taking homomorphic images, it is 
blowup-invariant 
and, clearly, $\gK^r_\ell$ is hereditary. So it remains to show that $\cK^F_{\ell+1}$ is
symmetrized-stable and vertex-extendable with respect to $\gK^r_\ell$. The fact that all 
members of $\gK^r_\ell$ are $\cK^F_{\ell+1}$-free implies 
$\pi(\cK^F_{\ell+1})\ge (\ell)_r/\ell^r$ and thus our claim on symmetrized stability follows 
from the next statement.

\begin{lemma}\label{LEMMA:weak-expansion-family-is-sym-stable}
	There exists some $\eps>0$ such that every symmetrized 
	$\cK^F_{\ell+1}$-free $r$-graph~$\cH$ with $n$ vertices 
	and $|\cH|>\left(\binom{\ell}{r}/\ell^{r}-\eps\right)n^{r}$
	is $K^r_\ell$-colorable.
\end{lemma}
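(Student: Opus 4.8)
The plan is to pass from $\cH$ to its \emph{reduced} $r$-graph $\cT$ obtained by identifying equivalent vertices, and then feed $\cT$ into hypothesis~\eqref{eq:1126}. Since $\cH$ is symmetrized it is a proper blowup $\cH=\cT[V_1,\dots,V_m]$ of the quotient $\cT$ on $V(\cT)=[m]$, and $\cT$ is $2$-covered: two vertices of $\cH$ in distinct equivalence classes lie in a common edge of $\cH$ (symmetrizedness), whose projection to $\cT$ covers the corresponding pair; and two equivalent vertices never lie in a common edge (a set lying in $L_\cH(u)=L_\cH(v)$ cannot contain $v$), so every edge of $\cH$ meets every class at most once, which is exactly what is needed for $\cH$ to be a genuine blowup of the $2$-covered $r$-graph $\cT$. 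Because every $V_i$ is nonempty, picking one representative from each class embeds $\cT$ as a subgraph of $\cH$, so $\cT$ is $\cK^F_{\ell+1}$-free.

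The key observation is then that a $2$-covered $\cK^F_{\ell+1}$-free $r$-graph must be $F$-free. Indeed, if $\cT$ contained a copy of $F$ on an $(\ell+1)$-set $W$, then for each pair $uv\in\binom W2\sm\partial_{r-2}F$ not covered by that copy the $2$-coveredness of $\cT$ provides an edge $S_{uv}\in\cT$ with $u,v\in S_{uv}$; the union of the copy of $F$ with these edges $S_{uv}$ is a weak expansion of $F$ sitting inside $\cT$, contradicting $\cK^F_{\ell+1}$-freeness. Hence $\cT$ is $F$-free. Now I would bring in the Lagrangian: the blowup structure $V(\cH)=\bigdcup_{i\in[m]}V_i$ is a $\cT$-coloring of $\cH$, so Lemma~\ref{LEMMA:blowup-langrangian} gives $|\cH|\le\lambda(\cT)\,n^r$. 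Writing $s$ for the supremum in~\eqref{eq:1126} and fixing once and for all
\[
	\eps:=\tfrac12\bigl(\tbinom{\ell}{r}/\ell^{r}-s\bigr)>0
\]
(this depends only on $F$, $\ell$, $r$), the assumption $|\cH|>\bigl(\binom{\ell}{r}/\ell^{r}-\eps\bigr)n^{r}$ forces $\lambda(\cT)>\binom{\ell}{r}/\ell^{r}-\eps=\tfrac12\bigl(\binom{\ell}{r}/\ell^{r}+s\bigr)>s$. Since $\cT$ is $F$-free with $\lambda(\cT)>s$, condition~\eqref{eq:1126} rules out $\cT$ being non-$K^r_\ell$-colorable, so $\cT$ is $K^r_\ell$-colorable. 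Composing a $K^r_\ell$-coloring of $\cT$ with the projection $V(\cH)\to V(\cT)$ yields a $K^r_\ell$-coloring of $\cH$ — every edge of $\cH$ projects to a rainbow edge of $\cT$ — which is the desired conclusion.

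I do not expect a serious obstacle here; the statement is a short consequence of the framework already set up. The only points deserving a moment's care are the passage to the reduced hypergraph (checking that $\cT$ is $2$-covered and that $\cH$ is literally a blowup of it) and the one-line argument that $2$-coveredness upgrades $\cK^F_{\ell+1}$-freeness to $F$-freeness; everything else is bookkeeping with Lemma~\ref{LEMMA:blowup-langrangian} and the gap in~\eqref{eq:1126}.
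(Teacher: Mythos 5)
Your proof is correct and takes essentially the same route as the paper's: reduce $\cH$ to its $2$-covered quotient $\cT$, observe that $2$-coveredness upgrades $\cK^F_{\ell+1}$-freeness of $\cH$ to $F$-freeness of $\cT$, bound $|\cH|\le\lambda(\cT)n^r$ via Lemma~\ref{LEMMA:blowup-langrangian}, and invoke~\eqref{eq:1126}. You spell out the two intermediate claims (that $\cH$ is a genuine blowup of a $2$-covered $\cT$, and that $2$-covered plus $\cK^F_{\ell+1}$-free implies $F$-free) more explicitly than the paper does, which is fine; the substance is identical.
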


\begin{proof}
	We contend that every positive number $\eps$ satisfing 
	\[
		\sup\left\{\lambda(\cG)\colon 
			\text{$\cG$ is $F$-free but not $K^r_\ell$-colorable} \right\}+\eps 
		\le 
		\binom{\ell}{r}/\ell^r
	\]
	has the desired property. 	To see this we consider an arbitrary symmetrized 
	$\cK^F_{\ell+1}$-free $r$-graph $\cH$ with $n$ vertices 
	and $|\cH|>\left(\binom{\ell}{r}/\ell^{r}-\eps\right)n^{r}$.
	Since $\cH$ is symmetrized, there exists a $2$-covered hypergraph $\cG$ such that 
	$\cH$ is a proper blow-up of $\cG$. Now $|\cH| \le \lambda(\cG)n^r$
	yields $\binom{\ell}r/\ell^r-\eps<\lambda(\cG)$.
	On the other hand, since~$\cH$ is $\cK^F_{\ell+1}$-free and~$\cG$ is $2$-covered, 
	$\cG$ must be $F$-free. So our choice of $\eps$ implies that $\cG$ 
	is $K^r_\ell$-colorable and, hence, so is $\cH$.
\end{proof}

The next lemma implies that $\cK^F_{\ell+1}$ is vertex-extendable with respect to $\gK^r_\ell$
and thus concludes the proof of Theorem~\ref{thm:1837}.

\begin{lemma}\label{LEMMA:weak-expansions-F-is-vtx-extendable}
There exist $\zeta>0$ and $N_0\in \NN$ such that every $\cK^F_{\ell+1}$-free $r$-graph $\cH$ 
on $n\ge N_0$ vertices satisfying the minimum degree condition 
$\delta(\cH)>\left(\binom{\ell-1}{r-1}/\ell^{r-1}-\zeta\right)n^{r-1}$ and 
possessing a vertex $v$ such that $\cH-v$ is $K^r_\ell$-colorable is $K^r_\ell$-colorable
itself.
\end{lemma}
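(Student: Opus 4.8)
The plan is to run the scheme of the proof of Lemma~\ref{LEMMA:cancellative-3-4-graphs-vertex-extendable}. Let $\cH$ be $\cK^F_{\ell+1}$-free on $n$ vertices with $\delta(\cH)>\bigl(\binom{\ell-1}{r-1}/\ell^{r-1}-\zeta\bigr)n^{r-1}$, and let $v\in V(\cH)$ be such that $\cH-v$ admits a $K^r_\ell$-coloring $V(\cH-v)=V_1\dcup\cdots\dcup V_\ell$. Since $\delta(\cH-v)\ge\delta(\cH)-n^{r-2}$ and $\binom{\ell-1}{r-1}/\ell^{r-1}=r\binom{\ell}{r}/\ell^r$, Corollary~\ref{LEMMA:near-Turan-hypergrap-structure} applied to $\cH-v$ with $m=\ell$ gives, for $\zeta$ small, that $|V_i|=(1/\ell\pm O(\zeta^{1/2}))n$ for all $i$, that $|V_j\setminus N_{\cH-v}(u)|=O(\zeta^{1/2}n)$ whenever $u\in V_i$ and $j\ne i$, and that $\cH-v$ contains all but $O(\zeta^{1/2}n^{r-1})$ of the link of each vertex inside $\widehat K:=K^r_\ell[V_1,\dots,V_\ell]$. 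We also record that $|L_\cH(v)|\ge\binom{\ell-1}{r-1}(n/\ell)^{r-1}(1-O(\zeta^{1/2}))$.

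The goal is to show that, after relabelling, $N_\cH(v)\cap V_\ell=\emptyset$ and every edge of $\cH$ through $v$ meets each class in at most one vertex; then $V_1,\dots,V_{\ell-1},V_\ell\cup\{v\}$ is a $K^r_\ell$-coloring of $\cH$ (edges avoiding $v$ are rainbow since $\cH-v$ is $K^r_\ell$-colored, and edges through $v$ avoid $V_\ell$ and are rainbow). I would establish this in two steps. \emph{Step 1}: every edge through $v$ is rainbow. If not, pick such an edge $E$ with $w,w'\in E\cap V_j$; then $L_{\cH-v}(w)$ and $L_{\cH-v}(w')$ are almost complete rainbow $(r-1)$-graphs on $\{V_i:i\ne j\}$, and together with $E$ and the near-completeness of $\cH-v$ this should produce a member of $\cK^F_{\ell+1}$ inside $\cH$. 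Here the hypothesis on $F$ enters: if $F$ has an isolated vertex $a_0$ and $F-a_0$ is not $2$-covered, send $a_0\mapsto v$ and an uncovered pair of $F-a_0$ to $w,w'$, letting $E$ double as the relevant expansion edges; if $F-a_0$ is $2$-covered, or if $F\subseteq B(r,\ell+1)$ with $v$ playing the distinguished vertex $1$ of $B(r,\ell+1)$ (which lies only in the edge $[r]$), route the embedding through the near-complete structure of $\cH-v$ instead, with $w$ or $w'$ playing a vertex joined to $v$ in the weak expansion. In each case the construction is carried out by fixing the handful of edges at $v$ and then invoking Lemma~\ref{LEMMA:greedily-embedding-Gi} to complete the remaining vertices greedily, the pre‑chosen set $S$ consisting of vertices of $\cH-v$, whose links in $\widehat K$ are almost entirely present. \emph{Step 2}: granting Step~1, $L_\cH(v)$ is rainbow, so the lower bound on $|L_\cH(v)|$ and $\binom{\ell-2}{r-1}<\binom{\ell-1}{r-1}$ force $L_\cH(v)$ to meet at least $\ell-1$ classes and to have more than $\sqrt{\zeta}\,n$ neighbours in all but at most one of them. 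If $v$ had a neighbour in every class, one builds a member of $\cK^F_{\ell+1}$ as before, now with $v$ in its light role (the isolated vertex, or vertex $1$ of $B(r,\ell+1)$) joined by single edges to a transversal $w_1,\dots,w_\ell$ of the classes that realizes $F-a_0$ (resp.\ the required edges of $F$) inside $\cH-v$ via Lemma~\ref{LEMMA:greedily-embedding-Gi}; contradiction. Hence $N_\cH(v)$ misses some class $V_\ell$, which finishes the proof.

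The step I expect to be the main obstacle is Step~1 together with the attendant case analysis---splitting on whether $F$ has an isolated vertex or $F\subseteq B(r,\ell+1)$, and, in the former case, on whether $F-a_0$ is $2$-covered, since a $2$-covered $F-a_0$ prevents placing two vertices of $F$ into one class and so forces a different embedding. Matching the quantifiers of Lemma~\ref{LEMMA:greedily-embedding-Gi} (the sizes of the restricted classes and the sets $T$ and $S$) to these constructions is routine bookkeeping once the combinatorial picture is fixed. Note that the Lagrangian hypothesis~\eqref{eq:1126} is not needed in this lemma; it was already used for the symmetrized‑stability Lemma~\ref{LEMMA:weak-expansion-family-is-sym-stable}.
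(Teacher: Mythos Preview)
Your overall two-step plan matches the paper's, and Step~2 is essentially Claim~\ref{CLAIM:neighbor-v-is-empty-in-some-Vi} together with the final averaging over classes. But Step~1, which you flag as the main obstacle, is actually much simpler than your case analysis suggests, and your first sub-case contains a real gap. If you send the isolated vertex $a_0\mapsto v$, then a weak expansion must contain, for each of the $\ell$ pairs $\{a_0,x\}$ with $x\in V(F)\setminus\{a_0\}$, an edge of $\cH$ through $v$ covering that pair. The single edge $E$ can cover at most $r-1$ such pairs, and since $\ell\ge r$ there remain pairs $\{v,u_k\}$ that would need further edges through $v$ landing on the particular transversal vertices $u_k$ you have chosen via Lemma~\ref{LEMMA:greedily-embedding-Gi}. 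At this stage of the argument you have no control over where the other edges of $\cH$ through $v$ go, so the construction cannot be completed as described.

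The paper (Claim~\ref{CLAIM:link-v-is-empty-inside-Vi}) avoids all case analysis in Step~1 and never places $v$ among the $\ell+1$ ``core'' vertices of the weak expansion. The single observation needed is that under \emph{either} hypothesis on $F$ the $r$-graph $F$ fails to be $2$-covered: an isolated vertex is uncovered from everything, and for $F\subseteq B(r,\ell+1)$ the pair $\{1,r+1\}$ is uncovered. Given $w,w'\in E\cap V_j$, one applies Lemma~\ref{LEMMA:greedily-embedding-Gi} with $S=\{w,w'\}$ and $T=[\ell]\setminus\{j\}$ to obtain $U$ with $\cH[U\cup\{w\}]\cong\cH[U\cup\{w'\}]\cong K_\ell^r$, and then embeds $F$ into the $(\ell+1)$-set $U\cup\{w,w'\}$ by sending an uncovered pair of $F$ to $\{w,w'\}$. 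Every pair of $U\cup\{w,w'\}$ other than $\{w,w'\}$ is already covered inside $\cH[U\cup\{w,w'\}]$, and the original edge $E$ (which happens to pass through $v$) covers $\{w,w'\}$; this yields a weak expansion in $\cH$ uniformly, with no sub-cases on the structure of $F$.
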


A slight modification of the proof below
shows that this holds for $H_{\ell+1}^{F}$ instead of the family $\cK_{\ell+1}^{F}$ 
as well. 

\begin{proof}[Proof of Lemma~\ref{LEMMA:weak-expansions-F-is-vtx-extendable}]
Choose $N_0^{-1}\ll\zeta\ll\ell^{-1}$ appropriately and let $\cH$ be a $\cK_{\ell+1}^{F}$-free $r$-graph on $n\ge N_0$ vertices whose minimum degree is at least 
$\left(\binom{\ell-1}{r-1}/\ell^{r-1}-\zeta\right)n^{r-1}$.
Write $V = V(\cH)$ and suppose that  
$\cH_{v} = \cH-v$ is $K_{\ell}^{r}$-colorable for some vertex $v\in V$.
Consider a {$K_{\ell}^{r}$-coloring} $\bigdcup_{i\in[\ell]} V_{i} = V\setminus\{v\}$ of $\cH_v$
and the associated blowup 
$\widehat{K}_{\ell}^{r} = K_{\ell}^{r}[V_1,\ldots,V_{\ell}]$ of~$K_{\ell}^{r}$.
Sets in $\widehat{K}_{\ell}^{r} \setminus \cH_{v}$ are called \emph{missing edges} of $\cH_v$; 
furthermore, for every $u\in V$ sets in $L_{\widehat{K}_{\ell}^{r}}(u)\setminus L_{\cH_v}(u)$ are 
called \emph{missing edges} of $L_{\cH_v}(u)$.

Notice that
\begin{align}\label{equ:delta-Hv}
\delta(\cH_v) 
\ge \delta(\cH) - n^{r-2} 
\ge \left(\binom{\ell-1}{r-1}/\ell^{r-1}-2\zeta\right)n^{r-1}.
\end{align}
Due to $|\cH|\ge n\delta(\cH)/r>\left(\binom{\ell}{r}/\ell^{r}-\zeta\right)n^{r}$
we have, similarly, 
\begin{align}\label{equ:Hv-size}
|\cH_v|
\ge
|\cH|-n^{r-1}  
\ge 
\left(\binom{\ell}{r}/\ell^r-2\zeta\right)n^{r}.
\end{align}
Consequently, the number of missing edges of $\cH_{v}$ is at most $2\zeta n^r$.
We proceed with a weak version of Corollary~\ref{LEMMA:near-Turan-hypergrap-structure}.

\begin{claim}\label{CLAIM:routine-results-expansion-one}
The following hold.
\begin{enumerate}[label=\alabel]
\item\label{it:413a}
We have $|V_i| = \left(1/\ell \pm \zeta^{1/3}\right)n$ for every  $i\in [\ell]$.
\item\label{it:413b}
If $i\in [\ell]$ and $u\in V(\cH_v)\sm V_i$, then $|V_i\sm N_\cH(u)|\le \zeta^{1/3}n$.
\item\label{it:413c}
For every $u\in V(\cH_v)$ the number of missing edges of $L_{\cH_{v}}(u)$ is at 
most $\zeta^{1/3} n^{r-1}$. \qed
\end{enumerate}
\end{claim}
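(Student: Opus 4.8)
The plan is to read all three parts off of Corollary~\ref{LEMMA:near-Turan-hypergrap-structure}, applied not to $\cH$ but to the $r$-graph $\cH_v$ together with its given $K^r_\ell$-coloring $\bigdcup_{i\in[\ell]}V_i$. The one bookkeeping point to keep in mind is that Corollary~\ref{LEMMA:near-Turan-hypergrap-structure} is phrased in terms of the vertex count $v(\cH_v)=n-1$, whereas~\eqref{equ:delta-Hv},~\eqref{equ:Hv-size} and the desired conclusions are written with $n$; but since $n^{r}\ge(n-1)^{r}$ and $n^{r-1}\ge(n-1)^{r-1}$ this direction of comparison is free in the hypotheses, and since $\sqrt{\zeta}\ll\zeta^{1/3}$ and $n\ge N_0$ is large the error terms produced by the corollary will sit comfortably inside the $\zeta^{1/3}$-slack that the claim permits.

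Concretely, for part~\ref{it:413a} I would rewrite~\eqref{equ:Hv-size} as $|\cH_v|\ge(\binom{\ell}{r}/\ell^{r}-2\zeta)(n-1)^{r}$ and feed it into Corollary~\ref{LEMMA:near-Turan-hypergrap-structure}~\ref{it:24a} with $m=\ell$, which yields $|V_i|=(1/\ell\pm C_1(2\zeta)^{1/2})(n-1)$ for each $i$; replacing $(n-1)/\ell$ by $n/\ell$ costs an additive $1/\ell\le 1$, and $C_1(2\zeta)^{1/2}(n-1)+1\le\zeta^{1/3}n$ since $N_0^{-1}\ll\zeta\ll\ell^{-1}$. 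For part~\ref{it:413b} I would similarly rewrite~\eqref{equ:delta-Hv} as $\delta(\cH_v)\ge(\binom{\ell-1}{r-1}/\ell^{r-1}-2\zeta)(n-1)^{r-1}$ and apply Corollary~\ref{LEMMA:near-Turan-hypergrap-structure}~\ref{it:24b}, obtaining $|V_i\sm N_{\cH_v}(u)|\le 2C_1(2\zeta)^{1/2}(n-1)\le\zeta^{1/3}n$ for every $u\in V(\cH_v)\sm V_i$; because $\cH_v\subseteq\cH$ we have $N_{\cH_v}(u)\subseteq N_{\cH}(u)$, so the same bound holds with $N_H(u)$ in place of $N_{\cH_v}(u)$. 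Part~\ref{it:413c} is immediate from Corollary~\ref{LEMMA:near-Turan-hypergrap-structure}~\ref{it:24c}: the missing edges of $L_{\cH_v}(u)$ are by definition precisely the members of $L_{\widehat{K}^r_\ell}(u)\sm L_{\cH_v}(u)$, and that corollary bounds their number by $C_2(2\zeta)^{1/2}(n-1)^{r-1}\le\zeta^{1/3}n^{r-1}$.

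I do not expect a genuine obstacle here; the lemma is explicitly flagged as ``a weak version'' of Corollary~\ref{LEMMA:near-Turan-hypergrap-structure}, and its entire content is already contained in that corollary. The only step that deserves a line of care is confirming that passing from the pair $(2\zeta,\,n-1)$ back to the clean exponent $\zeta^{1/3}$ and the clean vertex count $n$ is absorbed by the hierarchy $N_0^{-1}\ll\zeta\ll\ell^{-1}$: treating $C_1$ and $C_2$ as fixed constants depending only on $\ell$ and $r$, each of $C_1(2\zeta)^{1/2}(n-1)+1\le\zeta^{1/3}n$, $2C_1(2\zeta)^{1/2}(n-1)\le\zeta^{1/3}n$ and $C_2(2\zeta)^{1/2}(n-1)^{r-1}\le\zeta^{1/3}n^{r-1}$ is a routine verification once $\zeta$ is chosen small enough and $n\ge N_0$.
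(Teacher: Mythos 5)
Your proposal is correct and takes exactly the route the paper intends: the paper presents this claim with a bare \qed, describing it only as ``a weak version of Corollary~\ref{LEMMA:near-Turan-hypergrap-structure}'', and your argument fills in precisely those details — applying parts~\ref{it:24a}, \ref{it:24b}, \ref{it:24c} of that corollary to $\cH_v$ with its $K^r_\ell$-coloring, feeding in~\eqref{equ:delta-Hv} and~\eqref{equ:Hv-size}, and absorbing the $n$ vs.\ $n-1$ shift and the $\zeta^{1/2}\to\zeta^{1/3}$ slack into the hierarchy $N_0^{-1}\ll\zeta\ll\ell^{-1}$. The observation in part~\ref{it:413b} that $N_{\cH_v}(u)\subseteq N_\cH(u)$ suffices to pass from $\cH_v$ to $\cH$ is a nice touch and exactly what is needed.
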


The proof of our next claim exploits the fact that $F$ fails to be $2$-covered, i.e., 
that there exist two distinct vertices $u, v\in V(F)$ such that $uv\not\in\partial_{r-2} F$.
Indeed, if $F$ has an isolated vertex this is clear and if $F\subseteq B(r, \ell+1)$ 
we can take $u=1$ as well as $v=r+1$. 

\begin{claim}\label{CLAIM:link-v-is-empty-inside-Vi}
We have $|E\cap V_i|\le 1$ for all $E\in\cH$ and $i\in[\ell]$.
\end{claim}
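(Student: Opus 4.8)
The plan is to derive a contradiction with the $\cK^F_{\ell+1}$-freeness of $\cH$ by building a weak expansion of $F$ inside $\cH$ whenever some edge meets a colour class twice. For $r=2$ the statement is immediate, since a $K^2_\ell$-colouring of $\cH_v$ has no edge inside a class while every edge of $\cH$ with two endpoints in the same $V_i$ already lies in $\cH_v$; so I may assume $r\ge 3$. Suppose then, for contradiction, that $E\in\cH$ satisfies $|E\cap V_i|\ge 2$ for some $i$; by symmetry take $i=1$ and fix distinct $w,w'\in E\cap V_1$. Recall from the paragraph preceding the claim that, as $F$ is not $2$-covered, there are two distinct vertices $p,q\in V(F)$ with $pq\notin\partial_{r-2}F$, so that no edge of $F$ contains both $p$ and $q$; write $x_1,\dots,x_{\ell-1}$ for the remaining vertices of $F$.

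The core step is an application of Lemma~\ref{LEMMA:greedily-embedding-Gi}, taking $\cG=K^r_\ell$, letting $\cH_v$ play the role of $\cH$ there, using the partition $V(\cH_v)=\bigdcup_{i\in[\ell]}V_i$, and choosing $T=\{2,\dots,\ell\}$ and $S=\{w,w'\}\subseteq V_1=\bigcup_{j\notin T}V_j$. With $\eta=\zeta^{1/3}$, hypothesis~\ref{it:25a} follows from Claim~\ref{CLAIM:routine-results-expansion-one}\ref{it:413a}, hypothesis~\ref{it:25b} from the bound of $2\zeta n^r$ on the number of missing edges of $\cH_v$ recorded just above the claim, and hypothesis~\ref{it:25c} from Claim~\ref{CLAIM:routine-results-expansion-one}\ref{it:413c}. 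Since $\widehat{K}^r_\ell[V_{j_1},\dots,V_{j_r}]$ is the complete $r$-partite $r$-graph on any $r$ distinct classes, the conclusion of the lemma yields a transversal $U=\{u_j\colon j\in T\}$ with $u_j\in V_j$ such that $\binom{U}{r}\subseteq\cH_v$ and $B\cup\{w\},\,B\cup\{w'\}\in\cH_v$ for every $B\in\binom{U}{r-1}$.

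Finally I would define $\phi\colon V(F)\to V(\cH)$ by $\phi(p)=w$, $\phi(q)=w'$ and letting $\phi$ map $x_1,\dots,x_{\ell-1}$ bijectively onto $U$ in an arbitrary order; this is injective because $w,w'$ and the vertices of $U$ are pairwise distinct. Each edge $A\in F$ meets $\{p,q\}$ at most once, so $\phi(A)$ is an $r$-subset of $\{w,w'\}\cup U$ meeting $\{w,w'\}$ at most once, hence lies in $\binom{U}{r}$ or equals $B\cup\{w\}$ or $B\cup\{w'\}$ for some $B\in\binom{U}{r-1}$; in every case $\phi(A)\in\cH_v\subseteq\cH$. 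For each pair $\{a,b\}\in\binom{V(F)}2\setminus\partial_{r-2}F$ I must exhibit an edge $S_{ab}\in\cH$ through $\{\phi(a),\phi(b)\}$: when $\{a,b\}=\{p,q\}$ the edge $E$ serves, since $\{w,w'\}\subseteq E$; for every other such pair the two-element set $\{\phi(a),\phi(b)\}$ can be completed to one of the edges already placed in $\cH_v$ (using $|U|=\ell-1\ge r-1$, so that it is contained in some member of $\binom{U}{r}$, or, in the degenerate case $\ell=r$, in the edge $U\cup\{w\}$, or in an edge $B\cup\{w\}$/$B\cup\{w'\}$ if one of $a,b$ is $p$ or $q$). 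These choices display a member of $\cK^F_{\ell+1}$ as a subgraph of $\cH$, contradicting that $\cH$ is $\cK^F_{\ell+1}$-free.

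I expect the only genuinely delicate point to be the verification of the three hypotheses of Lemma~\ref{LEMMA:greedily-embedding-Gi}: this reduces to the facts that the classes $V_j$ are large, that $\cH_v$ misses at most $2\zeta n^r$ edges, and that each of the two links $L_{\cH_v}(w)$ and $L_{\cH_v}(w')$ misses at most $\zeta^{1/3}n^{r-1}$ edges — all of which are already available from Claim~\ref{CLAIM:routine-results-expansion-one} and the line preceding the claim, provided $N_0^{-1}\ll\zeta\ll\ell^{-1}$ as arranged. Conceptually, the one idea that makes the argument work is to let the offending edge $E$ itself be the connector edge $S_{pq}$, so that the remainder of the weak expansion of $F$ has to be found only inside the extremely dense, almost-complete-$\ell$-partite $r$-graph $\cH_v$, where the greedy embedding lemma applies directly; everything else is bookkeeping about which $r$-subsets of $\{w,w'\}\cup U$ occur as edge images or as covers of the non-$2$-covered pairs of $F$.
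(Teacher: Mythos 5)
Your proof is correct and follows essentially the same route as the paper's: apply Lemma~\ref{LEMMA:greedily-embedding-Gi} with $S=\{w,w'\}$, $T=[2,\ell]$ to produce a transversal $U$ of $V_2,\dots,V_\ell$ on which $\cH_v$ restricted to $U\cup\{w\}$ and to $U\cup\{w'\}$ is complete, then exploit the non-$2$-covered pair of $F$ to realize $F$ on $\{w,w'\}\cup U$ and use the offending edge $E$ as the connector $S_{pq}$. The paper's write-up is terser --- it phrases the last step as ``$F$ is a subgraph of $\cH[U\cup\{w_1,w_1'\}]$'' and leaves the reader to supply the embedding --- whereas you spell out the map $\phi$ and the covers $S_{ab}$ explicitly; you also record the (slightly pedantic but genuinely necessary) observation that Lemma~\ref{LEMMA:greedily-embedding-Gi} requires $r\ge 3$, disposing of $r=2$ separately, which the paper elides. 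No gaps.
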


\begin{proof}[Proof of Claim~\ref{CLAIM:link-v-is-empty-inside-Vi}]
Otherwise we may assume, without loss of generality, that for some edge $E$ 
there exist two distinct vertices $w_1,w_1' \in E\cap V_1$.
By Claim~\ref{CLAIM:routine-results-expansion-one}~\ref{it:413a} and~\ref{it:413b} for every 
$i\in [2,\ell]$ the set $V_i' = V_i\cap N_{\cH}(w_1) \cap N_{\cH}(w'_1)$
satisfies $|V_i'| > n/2\ell$.
Applying Lemma~\ref{LEMMA:greedily-embedding-Gi} with $S = \{w_1,w'_1\}$ and $T=[2,\ell]$
we obtain vertices $u_i \in V_i'$ for $i\in[2,\ell]$ such that the set 
$U = \{u_i\colon i\in[2,\ell]\}$ satisfies 
$\cH[U\cup \{w_1\}] \cong \cH[U\cup \{w'_1\}] \cong K_{\ell}^{r}$.
As $F$ is not $2$-covered, it is a subgraph of $H = \cH[U\cup \{w_1, w'_1\}]$.
Thus $H\cup \{E\}$ is a weak expansion of $F$, contrary to $\cH$ being $\cK^F_{\ell+1}$-free. 
\end{proof}

Essentially it remains to be shown that $N_{\cH}(v) \cap V_i=\varnothing$ holds for some 
$i\in [\ell]$. Preparing ourselves we first show the following weaker result.  

\begin{claim}\label{CLAIM:neighbor-v-is-empty-in-some-Vi}
There is no index $i\in [\ell]$ such that $N_{\cH}(v)\cap V_i \ne \emptyset$
and $|N_{\cH}(v) \cap V_j| \ge 2\zeta^{1/4r} n$ for all $j\in [\ell]\sm\{i\}$.
\end{claim}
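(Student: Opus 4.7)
The plan is to prove Claim~\ref{CLAIM:neighbor-v-is-empty-in-some-Vi} by contradiction: assuming such an index $i$ exists, I will construct a member of $\cK^F_{\ell+1}$ inside $\cH$, contradicting $\cK^F_{\ell+1}$-freeness of $\cH$. After relabeling take $i=1$, fix $u\in N_\cH(v)\cap V_1$, and select an edge $E\in\cH$ containing $\{u,v\}$. By Claim~\ref{CLAIM:link-v-is-empty-inside-Vi} we have $|E\cap V_j|\le 1$ for every $j$, so after a further relabeling of the indices $\{2,\dots,\ell\}$ we may write $E=\{v,u,x_2,\dots,x_{r-1}\}$ with $x_j\in V_j$ for each $j\in[2,r-1]$. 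Set $S_0=\{u,x_2,\dots,x_{r-1}\}$.

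The core step is to extend $S_0$ to a set $U=S_0\cup\{u_r,\dots,u_\ell\}$ of $\ell$ vertices with $u_j\in V_j\cap N_\cH(v)$ for every $j\in[r,\ell]$, such that every $r$-subset of $U$ meeting $S_0$ in at most one vertex is an edge of $\cH$. To this end I would apply Lemma~\ref{LEMMA:greedily-embedding-Gi} to the restriction of $\cH_v$ obtained by replacing each $V_j$ $(j\in[r,\ell])$ with $V_j\cap N_\cH(v)$, taking $T=[r,\ell]$ and $S=S_0$ in the notation of that lemma. The near-completeness provided by Claim~\ref{CLAIM:routine-results-expansion-one}, together with the hypothesis $|N_\cH(v)\cap V_j|\ge 2\zeta^{1/4r}n$ for $j\in[r,\ell]$, makes all quantitative hypotheses of Lemma~\ref{LEMMA:greedily-embedding-Gi} valid for a suitable auxiliary parameter $\eta$ with $\zeta\ll\eta\ll\zeta^{1/(4r)}$.

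With $U$ in hand I would embed $F$ into $\{v\}\cup U$ and split into the two cases of the structural hypothesis on $F$. If $F$ has an isolated vertex $w^\ast$, send $w^\ast\to v$ and extend arbitrarily to a bijection $V(F)\setminus\{w^\ast\}\to U$; every edge of $F$ then has image inside $U$ and so lies in $\cH$, and every non-covered pair $\{w^\ast,w\}$ is witnessed either by $E$ (when the partner lies in $S_0$) or by an edge of $\cH$ through $v$ provided by the condition $u_j\in N_\cH(v)$. If instead $F\subseteq B(r,\ell+1)$, label $V(F)=[\ell+1]$ accordingly and send $1\mapsto v$, $2\mapsto u$, $j\mapsto x_{j-1}$ for $j\in[3,r]$, and $k\mapsto u_{k-1}$ for $k\in[r+1,\ell+1]$. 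The only edge of $F$ touching vertex $1$ is $[r]$, whose image is $E$; every other edge $E'$ of $F$ satisfies $|E'\cap[2,r]|\le 1$, so its image meets $S_0$ at most once and therefore lies in $\cH$ by the construction of $U$. The non-covered pairs are handled exactly as in the first case, using $r$-subsets of $U$ for the pairs whose images avoid $v$. In either situation the resulting subgraph of $\cH$ is a copy of a member of $\cK^F_{\ell+1}$, which is the desired contradiction.

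The technical heart of the argument is the application of Lemma~\ref{LEMMA:greedily-embedding-Gi}: one must verify that restricting each $V_j$ to $V_j\cap N_\cH(v)$ preserves enough density for the $r$-subset and link conditions to stay within the error tolerance $\eta n^r$, and pick the hierarchy so that this $\eta$ still dominates the fraction of missing edges guaranteed by Claim~\ref{CLAIM:routine-results-expansion-one}. A minor edge case arises in the range $\ell<2r-2$, where $\{u_r,\dots,u_\ell\}$ is too small for Lemma~\ref{LEMMA:greedily-embedding-Gi} to supply the required $(r-1)$-subsets directly; but in that range $B(r,\ell+1)$ collapses to $\{[r]\}$, so any $F\subseteq B(r,\ell+1)$ necessarily has an isolated vertex and the first case applies.
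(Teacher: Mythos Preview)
Your argument for the case $F\subseteq B(r,\ell+1)$ is essentially the paper's proof, and it is correct. The gap is in the isolated-vertex case.

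Your single application of Lemma~\ref{LEMMA:greedily-embedding-Gi} with $S=S_0=\{u,x_2,\dots,x_{r-1}\}$ and $T=[r,\ell]$ only guarantees that every $r$-subset of $U$ meeting $S_0$ in \emph{at most one} vertex lies in~$\cH$; you say so yourself. But when $F$ merely has an isolated vertex~$w^\ast$ and is otherwise unconstrained, there is no reason a bijection $V(F)\setminus\{w^\ast\}\to U$ should send each edge of $F$ to a set hitting $S_0$ at most once. So the sentence ``every edge of $F$ then has image inside $U$ and so lies in~$\cH$'' is an unjustified leap. Concretely, for $r=3$ and $\ell=5$ your set $U$ has $|S_0|=2$ and you certify only $7$ of the $10$ triples in~$U$ as edges; if $F-w^\ast$ has, say, nine edges then no bijection can avoid the three uncertified triples.

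The paper avoids this by treating the two cases with \emph{different} invocations of Lemma~\ref{LEMMA:greedily-embedding-Gi}. In the isolated-vertex case it does \emph{not} fix the edge $E$ in advance; instead it takes $S=\{u_1\}$ and $T=[2,\ell]$, restricting each $V_j$ to $V_j\cap N_\cH(v)\cap N_\cH(u_1)$. Because $|S|=1$ this yields a full copy of $K_\ell^r$ on $\{u_1,\dots,u_\ell\}$, so any $F$ with an isolated vertex embeds. Your unified construction is exactly what is needed for the $B(r,\ell+1)$ case, but it is too weak for the first case; the fix is simply to run the isolated-vertex case separately with the smaller set $S=\{u\}$ and the larger $T=[2,\ell]$.
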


\begin{proof}[Proof of Claim~\ref{CLAIM:neighbor-v-is-empty-in-some-Vi}]
By symmetry it suffices to deal with the case $i=1$. Assume for the sake of 
contradiction that there exists a vertex $u_1\in N_{\cH}(v)\cap V_1$ and moreover, 
that $|N_{\cH}(v) \cap V_j| \ge 2\zeta^{1/4r} n$ for all $j\in [2, \ell]$.
We shall show that, contrary to the hypothesis, $\cH$ contains a weak expansion of $F$.

Suppose first that $F$ has an isolated vertex. Due to
Claim~\ref{CLAIM:routine-results-expansion-one}~\ref{it:413b} for every $j\in[2,\ell]$
the set $V_j' = V_j\cap N_{\cH}(v) \cap N_{\cH}(u_1)$ 
has at least the size $|V_j'| \ge 2 \zeta^{1/4r} n - \zeta^{1/3} n > \zeta^{1/4r} n$.
So we can apply Lemma~\ref{LEMMA:greedily-embedding-Gi} to $S = \{u_1\}$ and $T=[2,\ell]$, 
thus obtaining a set $U = \{u_j\colon j\in[2,\ell]\}$ with $u_j \in V_j'$ for $j\in[2,\ell]$ 
and $\cH[U\cup \{u_1\}] \cong K_{\ell}^{r}$.
For every $i\in [\ell]$ let $e_i \in \cH$ be an edge containing both $u_i$ and $v$.
Since at least one vertex of $F$ is isolated, 
$H = \cH[U\cup \{u_1\}] \cup \{e_j\colon j\in[\ell]\}$
is the desired weak $(\ell+1)$-expansion of $F$.

So it remains to consider the case $F\subseteq B(r, \ell+1)$.
Pick an edge $E\in \cH$ containing $\{v,u_1\}$.
By Claim~\ref{CLAIM:link-v-is-empty-inside-Vi} we may assume that $E$ is of the form 
$\{v, u_1, \ldots, u_{r-1}\}$, where $u_j\in V_j$ holds for all $j\in [2,r-1]$.
Claim~\ref{CLAIM:routine-results-expansion-one}~\ref{it:413b} tells us that for every 
$k\in[r,\ell]$ the set 
\[
	V_k' = V_k\cap N_{\cH}(v) \cap \left(\bigcap_{j\in[1,r-1]}N_{\cH}(u_j)\right)
\]
has at least the size $|V_k'| \ge 2 \zeta^{1/4r} n - (r-1)\zeta^{1/3} n > \zeta^{1/4r} n$. 
For this reason Lemma~\ref{LEMMA:greedily-embedding-Gi} 
applied to $S = \{u_1, \ldots,u_{r-1}\}$
and $T=[r,\ell]$ leads to a set $U = \{u_k\colon k\in[r,\ell]\}$ such that 
\begin{enumerate}
	\item[$\bullet$] $u_k \in V_k'$ for every $k\in[r,\ell]$, 
	\item[$\bullet$] $\cH[U] \cong K_{\ell-r+1}^{r}$,
	\item[$\bullet$] and $L_{\cH}(u_j)[U] \cong K_{\ell-r+1}^{r-1}$ for every $j\in[r-1]$.
\end{enumerate}
Next we select for every $k \in [r, \ell]$ an edge $E_i \in \cH$ containing both $u_k$ and $v$. 
Now
\[
	H = \cH[U\cup \{u_1,\ldots, u_{r-1}\}] \cup \{E\} \cup \{E_k\colon k\in[r,\ell]\}
\]
is a weak expansion of $B(r, \ell+1)$ and, a fortiori, a weak expansion of $F$.
\end{proof}

Let us now consider the set 
\[
	S 
	= 
	\left\{i \in [\ell] \colon |N_{\cH}(v) \cap V_i| \ge 2\zeta^{1/4r} n\right\}. 
\]
By Claim~\ref{CLAIM:neighbor-v-is-empty-in-some-Vi} we know, in particular, that $S\ne [\ell]$.
Pick an arbitrary $i_\star\in [\ell]\sm S$. 
Now Claim~\ref{CLAIM:routine-results-expansion-one}~\ref{it:413a} 
and $|d_{\cH}(v)| \ge \left(\binom{\ell-1}{r-1}/\ell^{r-1}-\zeta\right)n^{r-1}$
imply $S = [\ell]\sm\{i_\star\}$ and a further application of 
Claim~\ref{CLAIM:neighbor-v-is-empty-in-some-Vi} 
discloses $N_{\cH}(v) \cap V_{i_\star}=\varnothing$. Together with 
Claim~\ref{CLAIM:link-v-is-empty-inside-Vi} this shows that 
the partition $V(\cH)=\bigdcup_{i\in[\ell]}\widehat{V}_i$ defined by
\begin{align}
\widehat{V}_i =
\begin{cases}
V_{i_\star}\cup \{v\} & \mbox{ if } i =i_\star, \\
V_i & \mbox{ if } i\ne i_\star,
\end{cases} \notag
\end{align}
is a $K_{\ell}^{r}$-coloring of $\cH$.
This completes the proof of Lemma~\ref{LEMMA:weak-expansions-F-is-vtx-extendable}.
\end{proof}

\subsection{Expansions of Matchings of size \texorpdfstring{$2$}{2}}
\label{SUBSEC:hypergraph-expansions-part-two}

In this subsection we shall derive Theorem~\ref{THM:degree-stable-of-hypergraph-expansion-two}
from Theorem~\ref{THM:Psi-trick:G-extendable-implies-degree-stability}. Again it is easy 
to see that $\cK^{M^r_2}_{2r}$ is blowup-invariant and that the class~$\gS^r$ is hereditary. 
Bene Watts, Norin, and Yepremyan proved in~\cite{BNY19} that 
\[
\sup\left\{\lambda(\cG)\colon 
	\text{$\cG$ is $M^r_2$-free but not semibipartite}  \right\}
	< 
	\frac{(1-1/r)^{r-1}}{r!}
\]
holds for all $r\ge 4$, where, let us recall, the numerator is the supremum of the edge 
densities of semibipartite $r$-graphs. Following the proof of 
Lemma~\ref{LEMMA:weak-expansion-family-is-sym-stable} one easily deduces from this result 
that $\cK^{M^r_2}_{2r}$ is symmetrized-stable with respect to $\gS^r$. So it only remains 
to establish vertex-extendibility, i.e., the following lemma.  

\begin{lemma}\label{LEMMA:2-matching-vertex-extendable-star}
Let $r \ge 4$ and $F = M_{2}^{r}$.
There exist $\zeta>0$ and $N_0\in\NN$ such that every $\cK^{F}_{2r}$-free $r$-graph $\cH$ 
on $n\ge N_0$ vertices satisfying 
$\delta(\cH) \ge \left(\left(1-\frac{1}{r}\right)^{r-1}/(r-1)!-\zeta\right)n^{r-1}$
and possessing a vertex $v$ for which $\cH-v$ is semibipartite
is semibipartite itself. 
\end{lemma}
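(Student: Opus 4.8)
The plan is to run the same three-phase argument that worked for Lemma~\ref{LEMMA:weak-expansions-F-is-vtx-extendable}, but now the role of the extremal blowup structure $K^r_\ell$ is played by a complete semibipartite $r$-graph. So I would set up constants $N_0^{-1}\ll\zeta\ll r^{-1}$, take $\cH$ to be $\cK^{M^r_2}_{2r}$-free on $n\ge N_0$ vertices with $\delta(\cH)\ge\bigl((1-1/r)^{r-1}/(r-1)!-\zeta\bigr)n^{r-1}$, and assume $\cH_v=\cH-v$ is semibipartite with witnessing partition $V(\cH_v)=A\dcup B$, where every edge of $\cH_v$ meets $A$ in exactly one vertex. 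First I would establish the analogue of Corollary~\ref{LEMMA:near-Turan-hypergrap-structure}~\ref{it:24a}: since the edge density of $\cH_v$ is within $O(\zeta)$ of $(1-1/r)^{r-1}$, and the only way to approach this density in a semibipartite graph is the balanced split $|A|\approx n/r$, $|B|\approx (1-1/r)n$, one gets $|A|=(1/r\pm\zeta^{1/3})n$ and $|B|=(1-1/r\pm\zeta^{1/3})n$; moreover the degree hypothesis forces, for all but $o(n)$ of the relevant vertices, that the link of each $a\in A$ is almost all of $\binom{B}{r-1}$ and the link of each $b\in B$ is almost all of $(\text{something of size }|A||B|^{r-2})$. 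This is exactly the sort of "weak Corollary~\ref{LEMMA:near-Turan-hypergrap-structure}" codified in Claim~\ref{CLAIM:routine-results-expansion-one} and I would state it as a claim with the proof omitted.

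Next I would show the structural analogue of Claim~\ref{CLAIM:link-v-is-empty-inside-Vi}: \emph{every edge of $\cH$ meets $A$ in at most one vertex}. Suppose some edge $E$ has two vertices $a_1,a_1'\in A$. Because the links of $a_1$ and $a_1'$ inside $\binom{B}{r-1}$ are both nearly complete (of size $\ge (1-O(\zeta))|B|^{r-1}/(r-1)!$) and $|B|$ is large, their intersection is huge; in fact $\cH[\{a_1'\}\cup B']$ for a large $B'\subseteq B$ of common neighbours essentially contains a complete semibipartite $r$-graph on $\{a_1'\}\cup B'$. Choosing $r-1$ vertices of $B'$ that also lie in the common link of $a_1$, we get an edge $e\subseteq B$ with $e\cup\{a_1\},\,e\cup\{a_1'\}\in\cH$; since $E$ is disjoint from such an $e$ (here one uses $r\ge 4$ so that $B$ is large enough to avoid the at most $r$ vertices of $E$), the three edges $E$, $e\cup\{a_1\}$, $e\cup\{a_1'\}$ — wait, more carefully, $e\cup\{a_1\}$ and $e\cup\{a_1'\}$ are two disjoint edges giving a copy of $M^r_2$; adding that $E$ covers the pair $\{a_1,a_1'\}$ one builds a weak expansion of $M^r_2$ inside $\cH$, contradicting $\cK^{M^r_2}_{2r}$-freeness. (The bookkeeping here is a routine application of Lemma~\ref{LEMMA:greedily-embedding-Gi} with $T$ indexing $r-1$ "copies" of $B$ and $S=\{a_1,a_1'\}$, exactly as in Claim~\ref{CLAIM:link-v-is-empty-inside-Vi}.) In particular this tells us $v$ can be added to $A$ only if it behaves like an $A$-vertex, and to $B$ only if it behaves like a $B$-vertex.

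Finally, the heart of the matter is deciding which side $v$ goes on and showing the resulting partition still forbids weak expansions of $M^r_2$. I would argue: if $v$ has a neighbour $a_1\in A$ \emph{and} enough neighbours spread across $B$, then by the same greedy-embedding trick one finds an edge through $v$ and an edge $e\cup\{a_1\}$ disjoint from it (again $r\ge 4$ gives room), producing two disjoint edges plus the covering structure needed for a weak expansion of $M^r_2$ — contradiction. Hence either $N_\cH(v)\cap A=\varnothing$, in which case putting $v$ into $A$ is fine (no edge of $\cH$ then has two $A$-vertices, by the previous paragraph plus $N_\cH(v)\cap A=\varnothing$), or $N_\cH(v)$ is "too concentrated", i.e. $v$ together with its neighbourhood already looks semibipartite with $v$ on the $B$-side; in the latter case $v\in B$ works. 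Either way $\cH$ is semibipartite, completing the proof. I expect the main obstacle to be this last case analysis: unlike the $K^r_\ell$ situation, where "$v$ misses one class" is the only possibility, here there are genuinely two candidate sides for $v$, and one must rule out the bad configuration that $v$ has a low but positive number of neighbours in $A$ — this is where the $M^r_2$-specific combinatorics (two disjoint edges, needing $2r\le n$ and $r\ge 4$ to fit the expansion) really gets used, and it is the analogue of Claim~\ref{CLAIM:neighbor-v-is-empty-in-some-Vi} but with the extra wrinkle of choosing between the $A$-side and the $B$-side for~$v$.
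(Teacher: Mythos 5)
Your high-level plan matches the paper's: set up the partition $V(\cH_v)=A\dcup B$, establish the routine estimates on class sizes and link densities, prove that every edge of $\cH$ meets the small side in at most one vertex, and then place $v$. You also correctly observe that, unlike the $K^r_\ell$ case, there are two candidate sides for $v$. However, there is a genuine error in the middle step.

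You claim that from an edge $E$ with $a_1,a_1'\in E\cap A$ you can extract a weak expansion of $M^r_2$ by finding a single $(r-1)$-set $e\subseteq B$ with $e\cup\{a_1\},\,e\cup\{a_1'\}\in\cH$. But these two edges are \emph{not} disjoint --- they share $e$, so $|(e\cup\{a_1\})\cap(e\cup\{a_1'\})|=r-1$. The configuration $\{E,\,e\cup\{a_1\},\,e\cup\{a_1'\}\}$ is a generalized triangle (a member of $\Sigma_r$), exactly the structure used for the cancellative case, and is irrelevant here: $M^r_2$ requires two \emph{disjoint} edges. What you actually need --- and what the paper proves --- is the following organizing principle: $\cK^{M^r_2}_{2r}$-freeness forbids two disjoint edges $E',E''$ of $\cH$ whose union $E'\cup E''$ is $2$-covered (then $\{E',E''\}$ already realizes $M^r_2$ on its $2r$ core vertices and the $2$-coveredness supplies the covering edges $S_{uv}$). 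To apply this to $a_1,a_1'\in A$ sharing an edge $E$, one constructs two \emph{disjoint} edges $E_u\subseteq B\cup\{a_1\}$ and $E_w\subseteq B\cup\{a_1'\}$ such that $E_u\sm\{a_1\}\subseteq N_\cH(a_1')$ and every vertex of $E_w$ is adjacent in $\partial_{r-2}\cH$ to every vertex of $E_u$; then $E_u\cup E_w$ is $2$-covered (the pair $\{a_1,a_1'\}$ being handled by $E$), which is the contradiction. Building two disjoint pseudorandom edges in $B$ is a real extra step compared to your shared-$e$ construction, though it is available thanks to the near-completeness of the links inside $B$.

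The final placement of $v$ is also too vague as written. Saying "$v$ is too concentrated so $v\in B$ works" does not address the delicate case where $v$ has some, but few, neighbours in $A$. The paper resolves the dichotomy concretely: if no edge through $v$ lies entirely inside $B\cup\{v\}$, put $v$ on the $B$-side; otherwise take such an edge $E_\star$, consider $X=\bigcap_{w\in E_\star}N_\cH(w)$, and use the forbidden-disjoint-pair observation to argue there is no edge inside $X\sm E_\star$; since few edges are missing, this forces $|X\cap A|$ to be tiny, and combined with the near-completeness of the links of $E_\star$'s $B$-vertices it forces $N_\cH(v)\cap A=\varnothing$, so $v$ goes to the $A$-side. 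The quantitative chain $|X\cap A|$ small $\Rightarrow$ $|N_\cH(v)\cap A|$ small $\Rightarrow$ $N_\cH(v)\cap A=\varnothing$ (via the claim you state) is the actual content of the last phase, and it is not replaceable by the informal argument you sketch.
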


In order to estimate the sizes of the vertex classes of semibipartite hypergraphs
with almost the maximum number of edges we use the following estimate. 

\begin{fact}\label{LEMMA:analysis-star-lagrangian}
If $r\ge 2$ and $x\in[0,1]$, then
\begin{align}
\frac{x(1-x)^{r-1}}{(r-1)!} + \frac{1}{r!}\left(1-\frac{1}{r}\right)^{r-3}\left(x-\frac{1}{r}\right)^2
\le  \frac{1}{r!}\left(1-\frac{1}{r}\right)^{r-1}. \notag
\end{align}
\end{fact}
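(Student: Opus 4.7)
The plan is to treat this as a one-variable analytic inequality and handle it by elementary calculus. I will define
\[
	h(x) = \frac{1}{r!}\left(1-\frac{1}{r}\right)^{r-1} - \frac{x(1-x)^{r-1}}{(r-1)!} - \frac{1}{r!}\left(1-\frac{1}{r}\right)^{r-3}\left(x-\frac{1}{r}\right)^2,
\]
so that the claim becomes $h(x)\ge 0$ on $[0,1]$. The case $r=2$ is degenerate: the three terms collapse and a direct expansion yields $h\equiv 0$, so I first dispose of that case and henceforth assume $r\ge 3$, in which case $h$ is a genuine polynomial of degree~$r$.

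Next I will identify the zeros of $h$ that can be read off directly: by construction $h(1/r)=0$, and the cancellation $(1-1/r)^{r-1}=(1-1/r)^{r-3}(1-1/r)^2$ also gives $h(1)=0$. This suggests differentiating and pulling the factor $x-1/r$ out explicitly. The computation should yield
\[
	h'(x) = \frac{x-1/r}{(r-1)!}\left[\,r(1-x)^{r-2} - \frac{2}{r}\left(1-\frac{1}{r}\right)^{r-3}\,\right],
\]
and the bracket $B(x)$ is clearly strictly decreasing on $[0,1]$ for $r\ge 3$. I will then evaluate $B$ at the two points of interest: a short simplification gives $B(1/r) = (r-2)(r+1)(1-1/r)^{r-3}/r$, which is positive for $r\ge 3$, while $B(1)=-\frac{2}{r}(1-1/r)^{r-3}$ is negative. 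Hence $B$ has a unique root $x_\star\in(1/r,1)$.

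Reading off signs, $h'<0$ on $(0,1/r)$, $h'>0$ on $(1/r,x_\star)$, and $h'<0$ on $(x_\star,1)$. So $h$ decreases on $[0,1/r]$, increases on $[1/r,x_\star]$, and decreases on $[x_\star,1]$. Combined with $h(1/r)=h(1)=0$, this already forces $h\ge 0$ on $[1/r,1]$. For the remaining interval $[0,1/r]$, I will use the identity $(1-1/r)^2-1/r^2 = 1-2/r$ to compute $h(0) = (r-2)(1-1/r)^{r-3}/(r\cdot r!)\ge 0$; together with monotonicity of $h$ on $[0,1/r]$ this finishes the argument.

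The main obstacle is essentially nonexistent, since every step is a routine manipulation and no result from the body of the paper is invoked. The one point that merits some care is simplifying $B(1/r)$ cleanly to confirm its sign without ambiguity; beyond that, the argument reduces to a straightforward sign analysis of the derivative.
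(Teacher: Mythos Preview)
Your argument is correct. The derivative factorisation is right, the sign analysis of $B(x)$ is clean, and the conclusion follows. One tiny redundancy: once you know $h$ is decreasing on $[0,1/r]$ and $h(1/r)=0$, you already have $h\ge 0$ there without computing $h(0)$; but checking both endpoints does no harm.

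Your route is genuinely different from the paper's. The paper does not differentiate at all: it invokes the Bernoulli-type inequality $(1+2h)(1+h)^{r-2}\ge 1+rh$ for $h\ge -1$, substitutes $h=(x-1/r)/(1-x)$, and after clearing denominators obtains
\[
(r-1)\,x(1-x)^{r-1}\le (1-1/r)^{r-2}\bigl[(1-1/r)^2-(x-1/r)^2\bigr],
\]
which rearranges to the desired inequality. This is slicker and avoids any case analysis or sign-tracking, but it relies on spotting the right substitution. Your calculus approach is more mechanical and self-contained, and it has the minor bonus of making the equality cases $x=1/r$ and $x=1$ (and the shape of $h$ between them) completely transparent.
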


Note that equality holds for $x=1/r$ and $x=1$.

\begin{proof}[Proof of Fact~\ref{LEMMA:analysis-star-lagrangian}]
	The case $x=1$ being clear we assume $x\in [0, 1)$ from now on.
	The standard inductive proof of Bernoulli's inequality also shows $(1+2h)(1+h)^{r-2}\ge 1+rh$
	for every real $h\ge -1$. In particular, for $h=(x-1/r)/(1-x)$ we obtain 
	\[
		 \left(\frac{1-1/r}{1-x}\right)^{r-2}\frac{1+x-2/r}{1-x}\ge \frac{(r-1)x}{1-x}\,.
	\]
	Multiplying by $(1-x)^{r}$ we deduce
	\begin{align*}
		(r-1)x(1-x)^{r-1}
		&\le 
		(1-1/r)^{r-2}(1-x)(1+x-2/r) \\
		&=
		(1-1/r)^{r-2}[(1-1/r)^2-(x-1/r)^2]
	\end{align*}
	and now it remains to divide by $(r-1)(r-1)!$.
\end{proof}

\begin{proof}[Proof of Lemma~\ref{LEMMA:2-matching-vertex-extendable-star}]
Fix some sufficiently small $\zeta\ll r^{-1}$ 
and then some sufficiently large $N_0\gg\zeta^{-1}$. 
Let $\cH$ be a $\cK^{F}_{2r}$-free $r$-graph on $n\ge N_0$ vertices 
whose minimum degree is at least
$\left(\left(1-\frac{1}{r}\right)^{r-1}/(r-1)!-\zeta\right)n^{r-1}$.
Set $V = V(\cH)$ and suppose that for some vertex~$v\in V$ the $r$-graph
$\cH_v = \cH-v$ is semibipartite.
Fix a partition $V(\cH_v) = V_1\cup V_2$ such that $|E\cap V_1| =1$ holds for every 
$E\in \cH_{v}$
and let $\widehat{\cS}$ be the complete semibipartite $r$-graph on~$V(\cH_v)$ 
corresponding to this partition.
Sets in $\widehat{\cS}\setminus \cH_v$ are called \emph{missing edges of $\cH_v$},
and for every $u\in V\setminus \{v\}$ sets in 
$L_{\widehat{\cS}}(u)\setminus L_{\cH_{v}}(u)$ are called \emph{missing edges of 
$L_{\cH_{v}}(u)$}.

As usual we have
\begin{align}
\delta(\cH_v) \ge \left(\left(1-\frac{1}{r}\right)^{r-1}/(r-1)!-2\zeta\right)n^{r-1}
\quad{\rm and}\quad
|\cH_v| \ge \left(\left(1-\frac{1}{r}\right)^{r-1}/r!-2\zeta\right)n^{r}. \notag
\end{align}
In particular, the number of missing edges of $\cH_{v}$ is at most $2\zeta n^r$.

\begin{claim}\label{CLAIM:routine-results-expansion-two-star}
The following statements hold.
\begin{enumerate}[label=\alabel]
\item\label{it:414a}
We have $|V_1| = \left(1/r \pm \zeta^{1/3}\right)n$ 
and $|V_2| = \left((r-1)/r \pm \zeta^{1/3}\right)n$.
\item\label{it:414b}
For every $u\in V(\cH_v)$ the number of missing edges of $L_{\cH_{v}}(u)$ 
is at most $\zeta^{1/3} n^{r-1}$.
\item\label{it:414c}
If $u\in V_1$, then $|V_2\sm N_{\cH_{v}}(u)| \le \zeta^{1/3} n$.
\item\label{it:414d}
If $u\in V_2$, then $|N_{\cH_{v}}(u)| \ge \left(1 - \zeta^{1/3}\right)n$.
\end{enumerate}
\end{claim}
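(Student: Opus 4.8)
The plan is to prove \ref{it:414a}--\ref{it:414d} by exactly the method underlying Corollary~\ref{LEMMA:near-Turan-hypergrap-structure}, with the single-variable estimate Fact~\ref{LEMMA:analysis-star-lagrangian} replacing Lemma~\ref{LEMMA:Lagrangian-complete-r-graph}. Every error term produced below will have the shape $C_r\zeta^{1/2}$ with $C_r$ depending only on $r$, hence at most $\zeta^{1/3}$ once $\zeta$ is small relative to $r$; I will suppress these constants. For \ref{it:414a}, set $x=|V_1|/n$. Since $\cH_v\subseteq\widehat{\cS}$, we have $|\cH_v|\le|\widehat{\cS}|=|V_1|\binom{|V_2|}{r-1}\le\frac{x(1-x)^{r-1}}{(r-1)!}n^r$, and combined with the lower bound $|\cH_v|\ge\bigl((1-1/r)^{r-1}/r!-2\zeta\bigr)n^r$ recorded just before the claim this gives $\frac{x(1-x)^{r-1}}{(r-1)!}\ge\frac{(1-1/r)^{r-1}}{r!}-2\zeta$. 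Inserting this into Fact~\ref{LEMMA:analysis-star-lagrangian} yields $\frac{1}{r!}(1-1/r)^{r-3}(x-1/r)^2\le2\zeta$, so $|x-1/r|=O(\zeta^{1/2})\le\zeta^{1/3}$, and \ref{it:414a} follows using $|V_2|=n-1-|V_1|$.

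The content of \ref{it:414b} is that a balanced complete semibipartite $r$-graph is degree-regular to leading order: granting \ref{it:414a}, a vertex $u\in V_1$ has $|L_{\widehat{\cS}}(u)|=\binom{|V_2|}{r-1}$ while a vertex $u\in V_2$ has $|L_{\widehat{\cS}}(u)|=|V_1|\binom{|V_2|-1}{r-2}$, and the identity $\frac{1}{r}\cdot\frac{(r-1)^{r-2}}{r^{r-2}(r-2)!}=\frac{(r-1)^{r-1}}{r^{r-1}(r-1)!}$ shows that in both cases $|L_{\widehat{\cS}}(u)|=\bigl((1-1/r)^{r-1}/(r-1)!+O(\zeta^{1/2})\bigr)n^{r-1}$. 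Because $L_{\cH_v}(u)\subseteq L_{\widehat{\cS}}(u)$, the number of missing edges of $L_{\cH_v}(u)$ equals $|L_{\widehat{\cS}}(u)|-d_{\cH_v}(u)$, and the minimum-degree bound $d_{\cH_v}(u)\ge\bigl((1-1/r)^{r-1}/(r-1)!-2\zeta\bigr)n^{r-1}$ forces this to be $O(\zeta^{1/2})n^{r-1}\le\zeta^{1/3}n^{r-1}$.

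Parts \ref{it:414c} and \ref{it:414d} follow from the familiar observation that a non-neighbor of $u$ destroys many potential link edges of $u$. For \ref{it:414c} fix $u\in V_1$ and let $W=V_2\sm N_{\cH_v}(u)$; every $A\in\binom{V_2}{r-1}$ meeting $W$ lies in $L_{\widehat{\cS}}(u)\sm L_{\cH_v}(u)$, so the number of missing edges of $L_{\cH_v}(u)$ is at least $\binom{|V_2|}{r-1}-\binom{|V_2|-|W|}{r-1}$; by \ref{it:414b} this rules out the regime $|W|\ge|V_2|/2$, and when $|W|<|V_2|/2$ the quantity $\binom{|V_2|}{r-1}-\binom{|V_2|-|W|}{r-1}\ge|W|\binom{|V_2|-|W|}{r-2}$ is of order $|W|n^{r-2}$, whence $|W|=O(\zeta^{1/2})n\le\zeta^{1/3}n$. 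For \ref{it:414d} fix $u\in V_2$ and set $W_1=V_1\sm N_{\cH_v}(u)$, $W_2=(V_2\sm\{u\})\sm N_{\cH_v}(u)$; since every link edge of $u$ in $\cH_v$ lies among the $(r-1)$-subsets of $N_{\cH_v}(u)$ with a single vertex in $V_1$, the number of missing edges of $L_{\cH_v}(u)$ is at least $|V_1|\binom{|V_2|-1}{r-2}-(|V_1|-|W_1|)\binom{|V_2|-1-|W_2|}{r-2}$, which in turn is at least each of $|W_1|\binom{|V_2|-1}{r-2}$ and $|V_1|\bigl(\binom{|V_2|-1}{r-2}-\binom{|V_2|-1-|W_2|}{r-2}\bigr)$; by \ref{it:414a} the first is of order $|W_1|n^{r-2}$ while the second, through the inequality $\binom{|V_2|-1}{r-2}-\binom{|V_2|-1-|W_2|}{r-2}\ge|W_2|\binom{|V_2|-1-|W_2|}{r-3}$, is of order $|W_2|n^{r-2}$, where we use $r\ge4$ so that $r-3\ge1$ and where the regime $|W_2|\ge|V_2|/2$ is again excluded by \ref{it:414b}. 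Hence \ref{it:414b} yields $|W_1|,|W_2|=O(\zeta^{1/2})n$, so that $|N_{\cH_v}(u)|\ge(n-2)-|W_1|-|W_2|\ge(1-\zeta^{1/3})n$.

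I anticipate no real difficulty; the whole claim is a routine transcription of Corollary~\ref{LEMMA:near-Turan-hypergrap-structure} to the semibipartite setting. The only points meriting a little attention are the numerical coincidence in \ref{it:414b} that equalizes $|L_{\widehat{\cS}}(u)|$ over $V_1$ and $V_2$ up to lower-order terms (so that \ref{it:414b} is read off uniformly from the minimum-degree hypothesis), the short case distinctions in \ref{it:414c} and \ref{it:414d} that turn a bound on missing link edges into a bound on the number of non-neighbors, and the use of the hypothesis $r\ge4$ of Lemma~\ref{LEMMA:2-matching-vertex-extendable-star} in part \ref{it:414d}.
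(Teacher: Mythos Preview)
Your proof is correct and follows essentially the same approach as the paper's: both derive~\ref{it:414a} from Fact~\ref{LEMMA:analysis-star-lagrangian}, obtain~\ref{it:414b} by comparing the (asymptotically equal) $\widehat{\cS}$-degrees on $V_1$ and $V_2$ to $\delta(\cH_v)$, and deduce~\ref{it:414c},~\ref{it:414d} by converting non-neighbours into missing link edges. The only difference is cosmetic---you spell out the counting explicitly (including the case split $|W|\gtrless|V_2|/2$), whereas the paper just asserts that each non-neighbour lies in $\Omega_r(n^{r-2})$ missing link edges and leaves the rest to the reader.
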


\begin{proof}
Setting $x=|V_1|/n$ we have 
\[
	2\zeta
	>
	\frac{(1-1/r)^{r-1}}{r!}-\frac{|\widehat{\cS}|}{n^r}
	>
	\frac{(1-1/r)^{r-1}}{r!}-\frac{x(1-x)^{r-1}}{(r-1)!}
\]
and due to $\zeta\ll r^{-1}$ Fact~\ref{LEMMA:analysis-star-lagrangian} 
leads to $|x-1/r|\le O_r(\zeta^{1/2})\le \zeta^{1/3}$, which proves~\ref{it:414a}.
Moroever, in $\widehat{\cS}$ every vertex has 
degree $\left(\left(1-\frac{1}{r}\right)^{r-1}/(r-1)!\pm O_r(\zeta^{1/2})\right)n^{r-1}$
and thus for every $u\in V(\cH_v)$ there are at most $O_r(\zeta^{1/2}) n^{r-1}$
missing edges of $L_{\cH_{v}}(u)$, which implies~\ref{it:414b}. 
Now for part~\ref{it:414c} it suffices to observe that every vertex 
in $|V_2\sm N_{\cH_{v}}(u)|$ belongs to $\Omega_r(n^{r-2})$ missing edges of $L_{\cH_{v}}(u)$
and the argument for~\ref{it:414d} is similar. 
\end{proof}

Since $\cH$ contains no weak expansion of $M^r_2$, there cannot exist two disjoint edges 
$E, E'\in \cH$ such that $E\cup E'$ is $2$-covered. 

\begin{claim}\label{clm:1253}
	If two distinct vertices $u, w\in V(\cH)$ 
	satisfy 
	\[
		|L_\cH(u)[V_2]|, |L_\cH(w)[V_2]|
		\ge 
		\left(\left(1-\frac{1}{r}\right)^{r-1}/(r-1)!-\zeta^{1/4}\right)n^{r-1},
	\] 
	then no edge of $\cH$ contains both of them.  
\end{claim}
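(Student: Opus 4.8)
The plan is to argue by contradiction. Assuming that some edge $E\in\cH$ contains both $u$ and $w$, I will exhibit two disjoint edges $E_1,E_2\in\cH$ whose union is $2$-covered, which contradicts the remark preceding the claim (two disjoint edges with $2$-covered union would yield a weak expansion of $M^r_2$ inside $\cH$). The two edges will have the form $E_1=\{u\}\cup A_1$ and $E_2=\{w\}\cup A_2$, where $A_1\in L_{\cH}(u)[V_2]$ and $A_2\in L_{\cH}(w)[V_2]$ are $(r-1)$-subsets of $V_2$ still to be chosen; I need them disjoint from each other and from $\{u,w\}$, and I need every one of the $r^2$ pairs $\{x,y\}$ with $x\in E_1$, $y\in E_2$ to lie in an edge of $\cH$. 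The real difficulty, as opposed to routine bookkeeping, will be covering the cross pairs $\{x,y\}$ that have both endpoints in $V_2$; this is what dictates the careful choice of $A_2$ below.

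Before selecting, I would record two consequences of the hypotheses. Combining the lower bounds on $|L_{\cH}(u)[V_2]|$ and $|L_{\cH}(w)[V_2]|$ with the estimate $|V_2|=\bigl((r-1)/r\pm\zeta^{1/3}\bigr)n$ from Claim~\ref{CLAIM:routine-results-expansion-two-star}~\ref{it:414a} shows that each of $L_{\cH}(u)[V_2]$ and $L_{\cH}(w)[V_2]$ omits at most $O_r(\zeta^{1/4})n^{r-1}$ of the $(r-1)$-subsets of $V_2$; a quick double count then bounds the number of vertices $x\in V_2$ lying in no member of $L_{\cH}(u)[V_2]$, call this set $B_u$, by $O_r(\zeta^{1/4})n$, and similarly for the set $B_w$ associated with $w$. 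Note that every $x\in V_2\setminus B_u$ satisfies $\{u,x\}\in\partial_{r-2}\cH$, and likewise for $w$. The second observation is just Claim~\ref{CLAIM:routine-results-expansion-two-star}~\ref{it:414d}: every vertex of $V_2$ is $\cH_v$-adjacent to all but at most $\zeta^{1/3}n$ vertices of $V_2$.

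The selection then runs in two rounds. Since the number of $(r-1)$-subsets of $V_2$ meeting $B_w\cup\{u,w\}$ is $O_r(\zeta^{1/4})n^{r-1}$, which is far below the lower bound on $|L_{\cH}(u)[V_2]|$, I may pick $A_1=\{b_1,\dots,b_{r-1}\}\in L_{\cH}(u)[V_2]$ disjoint from $B_w\cup\{u,w\}$. I then set $W=\bigl(\bigcap_{j\in[r-1]}N_{\cH_v}(b_j)\bigr)\cap V_2\setminus\bigl(A_1\cup\{u,w\}\cup B_u\bigr)$; the second observation together with the bounds on $B_u,B_w$ yields $|W|\ge\bigl((r-1)/r-O_r(\zeta^{1/4})\bigr)n$, so $\binom{W}{r-1}$ has more elements than the number of $(r-1)$-subsets of $V_2$ omitted by $L_{\cH}(w)[V_2]$, and I may pick $A_2\in L_{\cH}(w)[V_2]\cap\binom{W}{r-1}$.

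It remains to check the properties. Both $E_1=\{u\}\cup A_1$ and $E_2=\{w\}\cup A_2$ are edges of $\cH$ of size $r$, and they are disjoint by construction, so $\{E_1,E_2\}$ is a copy of $M^r_2$ on $2r$ vertices. Any pair inside $E_1$, respectively inside $E_2$, is $2$-covered by that very edge; the pair $\{u,w\}$ is covered by $E$; each pair $\{u,a\}$ with $a\in A_2\subseteq V_2\setminus B_u$ and each pair $\{w,b\}$ with $b\in A_1\subseteq V_2\setminus B_w$ is covered thanks to the first observation; and each pair $\{b,a\}$ with $b\in A_1$ and $a\in A_2\subseteq W\subseteq N_{\cH_v}(b)$ lies in an edge of $\cH_v\subseteq\cH$. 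Hence $E_1\cup E_2$ is $2$-covered, which is the contradiction we sought.
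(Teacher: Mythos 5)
Your proof is correct and takes essentially the same route as the paper: a two-round greedy selection of disjoint edges $E_1\subseteq V_2\cup\{u\}$ and $E_2\subseteq V_2\cup\{w\}$ whose union is $2$-covered, contradicting the $\cK^{M^r_2}_{2r}$-freeness of $\cH$. The only cosmetic difference is that you track the ``bad'' vertices $B_u,B_w$ (those in $V_2$ missed by the links restricted to $V_2$) rather than the sets $V_2\setminus N_\cH(u)$, $V_2\setminus N_\cH(w)$ used in the paper; both are small for the same reason and serve the same purpose of covering the cross-pairs involving $u$ or $w$.
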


\begin{proof}[Proof of Claim~\ref{CLAIM:link-v-is-empty-inside-Vi-expansion-two-star}]
Assume contrariwise that some edge $E\in \cH$ contains $u$ and $w$. 
We shall show that this leads to two disjoint edges $E_u$, $E_w$
of $\cH$ such that $u\in E_u\subseteq V_2\cup\{u\}$, $w\in E_{w}\subseteq V_2\cup\{w\}$, and $E_{u}\cup E_{w}$ is $2$-covered,
which is absurd.

Owing to Claim~\ref{CLAIM:routine-results-expansion-two-star}~\ref{it:414a}
and our assumption on the links of $u$ and $w$ we have 
\[
	|V_2\sm N_{\cH}(u)|,  |V_2\sm N_{\cH}(w)| \le \zeta^{1/5} n.
\]
The latter estimate and our lower bound on $|L_\cH(u)[V_2]|$ show that there exists 
an edge $E_{u}\in \cH$ such that $u\in E_{u}$ and $E_u\sm\{u\}\subseteq V_2\cap N_{\cH}(w)$.
Now Claim~\ref{CLAIM:routine-results-expansion-two-star}~\ref{it:414d} 
and our upper bound on $|V_2\sm N_{\cH}(u)|$
imply that the set $V_2' = \bigcap_{x\in E_u} N_{\cH}(x) \cap (V_2\sm E_u)$
has at least the size $|V_2'| \ge |V_2|- 2\zeta^{1/5} n$.
Thus there exists an edge $E_{w} \in \cH_v$ with $w\in E_{w}\subseteq V_2'\cup\{w\}$.
Clearly $E_u$ and $E_{w}$ are as desired. 
\end{proof}

By our lower bound on $\delta(\cH_v)$ any two distinct vertices $u, w\in V_1$ satisfy 
the hypothesis of Claim~\ref{clm:1253}, which has the following consequence.

\begin{claim}\label{CLAIM:link-v-is-empty-inside-Vi-expansion-two-star}
	We have $|E\cap V_1| \le 1$ for every $E\in \cH$. \qed 
\end{claim}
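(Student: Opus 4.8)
The plan is to obtain this claim as an immediate consequence of Claim~\ref{clm:1253} together with the minimum degree bound on $\cH_v$. I would argue by contradiction: suppose some edge $E\in\cH$ contains two distinct vertices $u,w\in V_1$. Since $V_1\subseteq V(\cH_v)$, both $u$ and $w$ are vertices of $\cH_v$, and the goal is to verify that their links satisfy the hypothesis of Claim~\ref{clm:1253}, which will then rule out the existence of an edge of $\cH$ through both of them.

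The one estimate that needs to be made is a lower bound on $|L_\cH(u)[V_2]|$. Here I would use that $\cH_v$ is semibipartite with $|E'\cap V_1|=1$ for every $E'\in\cH_v$: since $u\in V_1$, every edge of $\cH_v$ containing $u$ has its remaining $r-1$ vertices inside $V_2$, so $L_{\cH_v}(u)=L_{\cH_v}(u)[V_2]$, and as $\cH_v\subseteq\cH$ this gives
\[
	|L_\cH(u)[V_2]|\ \ge\ d_{\cH_v}(u)\ \ge\ \delta(\cH_v)\ \ge\ \left(\left(1-\frac{1}{r}\right)^{r-1}\!/(r-1)!-2\zeta\right)n^{r-1}.
\]
Because of the hierarchy $\zeta\ll r^{-1}$ we have $2\zeta\le\zeta^{1/4}$, so this is at least $\bigl(\left(1-\tfrac1r\right)^{r-1}/(r-1)!-\zeta^{1/4}\bigr)n^{r-1}$, which is exactly the bound demanded in Claim~\ref{clm:1253}; by symmetry the same holds for $w$.

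With the hypothesis of Claim~\ref{clm:1253} verified for the pair $u,w$, that claim asserts that no edge of $\cH$ contains both of them, contradicting $\{u,w\}\subseteq E$. This finishes the proof. I do not expect any genuine obstacle here — the content is entirely carried by the previously established Claim~\ref{clm:1253} — the only point requiring a moment's attention is the bookkeeping that the "$-2\zeta$" coming from $\delta(\cH_v)$ comfortably beats the weaker "$-\zeta^{1/4}$" threshold in Claim~\ref{clm:1253}, which is immediate from $\zeta\ll r^{-1}$.
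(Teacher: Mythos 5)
Your proposal is correct and follows exactly the paper's one-line argument: the paper simply notes that the lower bound on $\delta(\cH_v)$ lets any two distinct $u,w\in V_1$ satisfy the hypothesis of Claim~\ref{clm:1253}, which is precisely what you verify (spelling out that $L_{\cH_v}(u)=L_{\cH_v}(u)[V_2]$ by semibipartiteness so $|L_\cH(u)[V_2]|\ge\delta(\cH_v)$, and that $2\zeta\le\zeta^{1/4}$ for small $\zeta$).
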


Notice that $d_{\cH}(v) \ge \left(\left(1-\frac{1}{r}\right)^{r-1}/(r-1)!-\zeta\right)n^{r-1}$
yields 
\[	
	|N_{\cH}(v)| \ge \left(1-1/r -O_r(\zeta)\right)n\ge 2n/3, 
\]
whence
\begin{align}\label{eq:0126}
	|N_{\cH}(v)\cap V_2| \ge 2n/3-|V_1|\ge n/3.
\end{align}
 
If there exists no edge $E_\star\in \cH$ with $v\in E_\star\subseteq V_2\cup\{v\}$,
then $V(\cH)=V_1\dcup (V_2\cup \{v\})$ is a partition  exemplifying that $\cH$ is semibipartite
and we are done. So we may suppose from now on that such an edge $E_\star$ exists. 
Consider the set $X= \bigcap_{w\in E_\star}N_{\cH}(w)$.
On the one hand, Claim~\ref{CLAIM:routine-results-expansion-two-star}~\ref{it:414d} 
and~\eqref{eq:0126} imply
\[
	|X \cap V_2| 
	\ge 
	|N_\cH(v) \cap V_2| - (r-1) \zeta^{1/3} n 
	\ge 
	n/3 - n/12 
	= 
	n/4.
\]
On the other hand, there cannot exist an edge $E'\subseteq X\sm E_\star$, 
for then $\{E_\star, E'\}$ would be a matching in $\cH$ such that  
$E_\star\cup E'$ is $2$-covered.  
Since there are at most $2\zeta n^r$ missing edges, this 
implies $|X\cap V_1|\le O_r(\zeta) n\le \zeta^{1/3}n$.
As Claim~\ref{CLAIM:routine-results-expansion-two-star}~\ref{it:414d} 
yields $|N_\cH(v)\setminus X|\le (r-1)\zeta^{1/3}n$, we may conclude
\[
	|N_\cH(v)\cap V_1|\le |N_\cH(v)\setminus X|+|X\cap V_1|
	\le 
	r\zeta^{1/3}n,
\]
whence 
\[
	|L_\cH(v)[V_2]|
	\ge 
	d_\cH(v)-|N_\cH(v)\cap V_1|n^{r-2}
	\ge
	\left(\left(1-1/r\right)^{r-1}/(r-1)!-\zeta^{1/4}\right)n^{r-1}.
\]
Now Claim~\ref{clm:1253} 
discloses $N_{\cH}(v) \cap V_1 = \emptyset$. In view of 
Claim~\ref{CLAIM:link-v-is-empty-inside-Vi-expansion-two-star} this shows that the
partition $V(\cH)=(V_1\cup\{v\})\cup V_2$ witnesses the semibipartiteness of $\cH$.
\end{proof}

\section{Concluding remarks}\label{SEC:remarks}
$\bullet$ In this article we provided a framework for proving the degree-stability of 
certain classes of graph and hypergraph families, and applied it to the degree-stability 
of $\Sigma_{3}$, 
$\Sigma_4$, and $\cK_{\ell}^{F}$ for some combinations of $F$ and $\ell$.
In fact, one could push our results further and show that $T_{3}$, $T_{4}$, and $H_{\ell}^{F}$ (for some combinations of $F$ and $\ell$) 
are degree-stable by using the degree-stability results obtained here, applying the 
Removal lemma to prove the vertex-stability of $T_{3}$, $T_{4}$, and $H_{\ell}^{F}$, respectively, and finally applying 
Theorem~\ref{PROP:vertex-stable-and-vertex-extendable-implies-degree-stability}.

$\bullet$ Generalizing Theorem~\ref{thm:19} one may attempt to characterize
for arbitrary $\ell\ge r\ge 2$ the hypergraph families which are vertex-stable or 
degree-stable with respect to $\gK^r_\ell$. This problem is presumably very difficult 
and even partial results in this direction would be interesting. 

$\bullet$ A classical example in hypergraph Tur\'{a}n theory suggested by Vera T.~S\'os 
is the Fano plane, i.e. 
the $3$-graph on vertex set $[7]$ with edge set 
\begin{align}
\{123, 345, 561, 174, 275, 376, 246\}. \notag
\end{align}
The Tur\'{a}n density of the Fano plane was determined by De Caen and F\"{u}redi in~\cite{DF00}. 
Later Keevash and Sudakov~\cite{KS05b} and, independently, F\"{u}redi and Simonovits~\cite{FS05}
proved the degree-stability of the Fano plane and used it to determine the Tur\'{a}n number 
for large $n$.
The complete determination of its Tur\'{a}n number was obtained only recently by Bellmann and 
the third author~\cite{BR19}.
We do not know whether our method can be used to give another proof of the degree-stability 
of the Fano plane.

$\bullet$ Recall that by Theorem~\ref{THM:Erdos-Simonovits-stability} every family $\cF$ of 
graphs with $\chi(\cF)=\ell+1$ is edge-stable with respect to the 
family $\{T(n, \ell)\colon n\in \NN\}$ of Tur\'an graphs, which has the property that for every 
$n\in\NN$ it contains a {\it unique} $n$-vertex graph. This state of affairs prompted the 
second author~\cite{MU07} to define for every nondegenerate family $\cF$ of $r$-graphs the 
(edge-) stability number $\xi_e(\cF)$ to be the least number $t$ such that there exists a class 
of $r$-graphs~$\gH$ with the following properties:
\begin{enumerate}
	\item[$\bullet$] $\cF$ is edge-stable with respect to $\gH$;
	\item[$\bullet$] for every $n\in\NN$ there are $t$ hypergraphs on $n$ vertices in $\gH$.  
\end{enumerate}

For instance, the families studied in this article have stability number $1$ and standard 
conjectures imply that the stability number of $K^3_4$ is infinite. It was shown 
recently~\cites{LM2, LMR1} that for every $t\in\NN$ there exists a family $\cM_t$ 
of triple systems such that $\xi_e(\cM_t)=t$. 

In analogy with Definition~\ref{d:1715} one can also define a \emph{vertex-stability 
number} $\xi_v(\cF)$ and a \emph{degree-stability number} $\xi_d(\cF)$. These satisfy  
the easy estimates $\xi_{e}(\cF) \le \xi_{v}(\cF) \le \xi_{d}(\cF)$ and it would be interesting 
to study how ``exotic'' these parameters can get. 

$\bullet$ Our method can also be used in the context of other combinatorial structures, 
such as families of edge-weigthed graphs. To give an example, we recall the following 
result of Erd\H{o}s, Hajnal, S\'os, and Szemer\'edi~\cite{EHSS} 
from \emph{Ramsey-Tur\'an theory}: For $r\ge 2$ every $K_{2r}$-free graph 
with $n$ vertices and more than  $\bigl(\frac{3r-5}{3r-2}+o(1)\bigr)n^2/2$ edges contains 
an independent set of size $o(n)$. Here the constant $\frac{3r-5}{3r-2}$ is optimal and the
analogous problem with forbidden cliques of odd order is much easier. The proof of this result 
involves a certain family $\cF_{2r}$ of graphs with weights from $\{0, 1/2, 1\}$ assigned to 
their edges. The main points of the argument are $(i)$ that $\pi(\cF_{2r})=\frac{3r-5}{3r-2}$ and 
$(ii)$ that the regularity method establishes a connection between~$\cF_{2r}$ and $K_{2r}$.
L\"uders and the third author~\cite{LR-a} recently obtained the sharper result that 
for $\delta\ll r^{-1}$
every $K_{2r}$-free graph with $n$ vertices and more 
than $\bigl(\frac{3r-5}{3r-2}+\delta-\delta^2\bigr)n^2/2$ edges contains an independent set of 
size $\delta n$, where the term $\frac{3r-5}{3r-2}+\delta-\delta^2$ is again optimal. 
Their proof requires some stability result for the family $\cF_{2r}$. In fact, they provide
a rather ad-hoc proof of vertex-stability (see~\cite{LR-a}*{Proposition 5.5}) and returned to 
the topic in~\cite{LR-b} proving degree-stability. A straightforward adaptation of the 
$\Psi$-trick to weighted graphs yields an alternative (and shorter) proof of the 
degree-stability of $\cF_{2r}$. 

$\bullet$ We would like to emphasize that the strongest general stability result  
in this article, Theorem~\ref{THM:Psi-trick:G-extendable-implies-degree-stability-full-version},
can also be used for giving reasonable quantitative versions of edge stability. 
For instance, combined with the results in Subsection~\ref{SUBSEC:cancellative-hypergraphs}
it tells us that if $\eps>0$ is sufficiently small, then every $\Sigma_4$-free quadruple 
system $\cH$ on a sufficiently large number $n$ of vertices with more than $(1/256-\eps)n^4$
edges admits a partition $V(\cH)=A\dcup B\dcup C\dcup D\dcup Z$ such that $|Z|\le \eps^{1/2}n$
and $\cH-Z$ is $4$-partite with vertex classes $A$, $B$, $C$, and $D$. Moreover, all vertices 
in $V(\cH)\sm Z$ have at least the degree $(1/64-2\eps^{1/2})n^3$. 
Now a careful calculation 
shows $|A|, |B|, |C|, |D|=(1/4\pm 6\eps^{1/2})n$ and the proof of 
Claim~\ref{CLAIM:link-v-is-empty-inside-Vi-cancellative} discloses that the 
sets $A$, $B$, $C$, and~$D$ are independent in $\partial_2 \cH$. By the proof of  
Claim~\ref{CLAIM:v-has-no-neighbor-in-V1-cancellative}, if some $z\in Z$ satisfies 
$|L_\cH(z)[A, B, C]|\ge 4\eps^{1/2}n^3$, then $z$ has no neighbours in $D$. 
So $\cH$ can be made $4$-partite by the deletion of at most $17\eps n^4$ edges, namely
$(i)$ at most $\eps n^4$ edges with two or more vertices in $Z$;
$(ii)$ at most $4\eps n^4$ edges $zabc$ with $z\in Z$, $a\in A$, $b\in B$, $c\in C$, and
$|L_\cH(z)[A, B, C]|\ge 4\eps^{1/2}n^3$; $(iii)$ and,
similarly, at most $4\eps n^4$ edges of each of the three 
			types $zabd$, $zacd$, $zbcd$.
In particular, the edge stability of $\Sigma_4$ with respect to~$\gK^4_4$ holds with a linear 
dependence between the error terms. Taking into account that at most $400\eps n$ vertices 
$v\in V(\cH)$ can satisfy $d_{\cH}(v)\le n^3/80$ one can show the stronger result 
that~$\cH$ can be made $K^4_4$-colorable by the deletion of $7000\eps^{3/2}n^4$ edges,
which seems to be a new result. 

\section*{Acknoledgement}
We would like to thank both referees for reading this article very carefully
and making valuable suggestions.


\begin{bibdiv}
\begin{biblist}


\bib{BR19}{article}{
   author={Bellmann, Louis},
   author={Reiher, Chr.},
   title={Tur\'{a}n's theorem for the Fano plane},
   journal={Combinatorica},
   volume={39},
   date={2019},
   number={5},
   pages={961--982},
   issn={0209-9683},
   review={\MR{4039597}},
   doi={10.1007/s00493-019-3981-8},
}
	
\bib{BNY19}{article}{
   author={Bene Watts, Adam},
   author={Norin, Sergey},
   author={Yepremyan, Liana},
   title={A Tur\'{a}n theorem for extensions via an Erd\H{o}s-Ko-Rado theorem for
   Lagrangians},
   journal={Combinatorica},
   volume={39},
   date={2019},
   number={5},
   pages={1149--1171},
   issn={0209-9683},
   review={\MR{4039605}},
   doi={10.1007/s00493-019-3831-8},
}

\bib{BO74}{article}{
   author={Bollob\'{a}s, B\'{e}la},
   title={Three-graphs without two triples whose symmetric difference is
   contained in a third},
   journal={Discrete Math.},
   volume={8},
   date={1974},
   pages={21--24},
   issn={0012-365X},
   review={\MR{345869}},
   doi={10.1016/0012-365X(74)90105-8},
}

\bib{BIJ17}{article}{
   author={Brandt, Axel},
   author={Irwin, David},
   author={Jiang, Tao},
   title={Stability and Tur\'{a}n numbers of a class of hypergraphs via
   Lagrangians},
   journal={Combin. Probab. Comput.},
   volume={26},
   date={2017},
   number={3},
   pages={367--405},
   issn={0963-5483},
   review={\MR{3628909}},
   doi={10.1017/S0963548316000444},
}

\bib{DF00}{article}{
   author={De Caen, Dominique},
   author={F\"{u}redi, Zolt\'{a}n},
   title={The maximum size of 3-uniform hypergraphs not containing a Fano
   plane},
   journal={J. Combin. Theory Ser. B},
   volume={78},
   date={2000},
   number={2},
   pages={274--276},
   issn={0095-8956},
   review={\MR{1750899}},
   doi={10.1006/jctb.1999.1938},
}

\bib{deLLO19}{article}{
   author={de Oliveira Contiero, Lucas},
   author={Hoppen, Carlos},
   author={Lefmann, Hanno},
   author={Odermann, Knut},
   title={Stability results for two classes of hypergraphs},
   journal={SIAM J. Discrete Math.},
   volume={33},
   date={2019},
   number={4},
   pages={2023--2040},
   issn={0895-4801},
   review={\MR{4021269}},
   doi={10.1137/18M1190276},
}

\bib{EHSS}{article}{
   author={Erd{\H{o}}s, P.},
   author={Hajnal, A.},
   author={S{\'o}s, Vera T.},
   author={Szemer{\'e}di, E.},
   title={More results on Ramsey-Tur\'an type problems},
   journal={Combinatorica},
   volume={3},
   date={1983},
   number={1},
   pages={69--81},
   issn={0209-9683},
   review={\MR{716422}},
   doi={10.1007/BF02579342},
}

\bib{ES66}{article}{
   author={Erd\H{o}s, P.},
   author={Simonovits, M.},
   title={A limit theorem in graph theory},
   journal={Studia Sci. Math. Hungar.},
   volume={1},
   date={1966},
   pages={51--57},
   issn={0081-6906},
   review={\MR{205876}},
}

\bib{ES73}{article}{
   author={Erd\H{o}s, P.},
   author={Simonovits, M.},
   title={On a valence problem in extremal graph theory},
   journal={Discrete Math.},
   volume={5},
   date={1973},
   pages={323--334},
   issn={0012-365X},
   review={\MR{342429}},
   doi={10.1016/0012-365X(73)90126-X},
}

\bib{ES46}{article}{
   author={Erd\"{o}s, P.},
   author={Stone, A. H.},
   title={On the structure of linear graphs},
   journal={Bull. Amer. Math. Soc.},
   volume={52},
   date={1946},
   pages={1087--1091},
   issn={0002-9904},
   review={\MR{18807}},
   doi={10.1090/S0002-9904-1946-08715-7},
}

\bib{FF89}{article}{
   author={Frankl, P.},
   author={F\"{u}redi, Z.},
   title={Extremal problems whose solutions are the blowups of the small
   Witt-designs},
   journal={J. Combin. Theory Ser. A},
   volume={52},
   date={1989},
   number={1},
   pages={129--147},
   issn={0097-3165},
   review={\MR{1008165}},
   doi={10.1016/0097-3165(89)90067-8},
}	

\bib{FR84}{article}{
   author={Frankl, P.},
   author={R\"{o}dl, V.},
   title={Hypergraphs do not jump},
   journal={Combinatorica},
   volume={4},
   date={1984},
   number={2-3},
   pages={149--159},
   issn={0209-9683},
   review={\MR{771722}},
   doi={10.1007/BF02579215},
}

\bib{FS05}{article}{
   author={F\"{u}redi, Zolt\'{a}n},
   author={Simonovits, Mikl\'{o}s},
   title={Triple systems not containing a Fano configuration},
   journal={Combin. Probab. Comput.},
   volume={14},
   date={2005},
   number={4},
   pages={467--484},
   issn={0963-5483},
   review={\MR{2160414}},
   doi={10.1017/S0963548305006784},
}

\bib{HK13}{article}{
   author={Hefetz, Dan},
   author={Keevash, Peter},
   title={A hypergraph Tur\'{a}n theorem via Lagrangians of intersecting
   families},
   journal={J. Combin. Theory Ser. A},
   volume={120},
   date={2013},
   number={8},
   pages={2020--2038},
   issn={0097-3165},
   review={\MR{3102173}},
   doi={10.1016/j.jcta.2013.07.011},
}
	
\bib{JPW18}{article}{
   author={Jiang, Tao},
   author={Peng, Yuejian},
   author={Wu, Biao},
   title={Lagrangian densities of some sparse hypergraphs and Tur\'{a}n numbers
   of their extensions},
   journal={European J. Combin.},
   volume={73},
   date={2018},
   pages={20--36},
   issn={0195-6698},
   review={\MR{3836731}},
   doi={10.1016/j.ejc.2018.05.001},
}

\bib{KE11}{article}{
   author={Keevash, Peter},
   title={Hypergraph Tur\'{a}n problems},
   conference={
      title={Surveys in combinatorics 2011},
   },
   book={
      series={London Math. Soc. Lecture Note Ser.},
      volume={392},
      publisher={Cambridge Univ. Press, Cambridge},
   },
   date={2011},
   pages={83--139},
   review={\MR{2866732}},
}

\bib{KM04}{article}{
   author={Keevash, Peter},
   author={Mubayi, Dhruv},
   title={Stability theorems for cancellative hypergraphs},
   journal={J. Combin. Theory Ser. B},
   volume={92},
   date={2004},
   number={1},
   pages={163--175},
   issn={0095-8956},
   review={\MR{2078500}},
   doi={10.1016/j.jctb.2004.05.003},
}

\bib{KS05a}{article}{
   author={Keevash, Peter},
   author={Sudakov, Benny},
   title={The Tur\'{a}n number of the Fano plane},
   journal={Combinatorica},
   volume={25},
   date={2005},
   number={5},
   pages={561--574},
   issn={0209-9683},
   review={\MR{2176425}},
   doi={10.1007/s00493-005-0034-2},
}
		
\bib{KS05b}{article}{
   author={Keevash, Peter},
   author={Sudakov, Benny},
   title={On a hypergraph Tur\'{a}n problem of Frankl},
   journal={Combinatorica},
   volume={25},
   date={2005},
   number={6},
   pages={673--706},
   issn={0209-9683},
   review={\MR{2199431}},
   doi={10.1007/s00493-005-0042-2},
}

\bib{LPSS}{article}{
	author={Liu, Hong},
	author={Pikhurko, Oleg},
	author={Sharifzadeh, Maryam},
	author={Staden, Katherine},
	title={Stability from graph symmetrisation arguments with applications to inducibility},
	eprint={2012.10731},
}	

\bib{LIU19}{article}{
   author={Liu, Xizhi},
   title={New short proofs to some stability theorems},
   journal={European J. Combin.},
   volume={96},
   date={2021},
   pages={Paper No. 103350, 8},
   issn={0195-6698},
   review={\MR{4255417}},
   doi={10.1016/j.ejc.2021.103350},
}

\bib{Liu20a}{article}{
	author={Liu, Xizhi},
	title={Cancellative hypergraphs and Steiner triple systems},
	eprint={1912.11917},
	note={Submitted},
}	

\bib{LM1}{article}{
   author={Liu, Xizhi},
   author={Mubayi, Dhruv},
   title={The feasible region of hypergraphs},
   journal={J. Combin. Theory Ser. B},
   volume={148},
   date={2021},
   pages={23--59},
   issn={0095-8956},
   review={\MR{4193065}},
   doi={10.1016/j.jctb.2020.12.004},
}
		
\bib{LM2}{article}{
	author={Liu, Xizhi},
	author={Mubayi, Dhruv},
	title={A hypergraph Tur\'an problem with no stability},
	eprint={1911.07969},
	note={To appear in Combinatorica},	
}

\bib{LMR1}{article}{
	author={Liu, Xizhi},
	author={Mubayi, Dhruv},
	author={Reiher, Chr.},
	title={Hypergraphs with many extremal configurations},
	eprint={2102.02103},
	note={Submitted},	
}


\bib{LR-a}{article}{
   author={L\"{u}ders, Clara Marie},
   author={Reiher, Chr.},
   title={The Ramsey--Tur\'{a}n problem for cliques},
   journal={Israel Journal of Mathematics},
   volume={230},
   date={2019},
   number={2},
   pages={613--652},
   issn={0021-2172},
   review={\MR{3940430}},
   doi={10.1007/s11856-019-1831-4},
}

\bib{LR-b}{article}{
   author={L\"{u}ders, Clara Marie},
   author={Reiher, Chr.},
   title={Weighted variants of the Andr\'{a}sfai-Erd\H{o}s-S\'{o}s theorem},
   journal={Journal of Combinatorics},
   volume={11},
   date={2020},
   number={2},
   pages={305--328},
   issn={2156-3527},
   review={\MR{4060946}},
}

\bib{MS65}{article}{
   author={Motzkin, T. S.},
   author={Straus, E. G.},
   title={Maxima for graphs and a new proof of a theorem of Tur\'{a}n},
   journal={Canadian J. Math.},
   volume={17},
   date={1965},
   pages={533--540},
   issn={0008-414X},
   review={\MR{175813}},
   doi={10.4153/CJM-1965-053-6},
}
	
\bib{MU06}{article}{
   author={Mubayi, Dhruv},
   title={A hypergraph extension of Tur\'{a}n's theorem},
   journal={J. Combin. Theory Ser. B},
   volume={96},
   date={2006},
   number={1},
   pages={122--134},
   issn={0095-8956},
   review={\MR{2185983}},
   doi={10.1016/j.jctb.2005.06.013},
}

\bib{MU07}{article}{
   author={Mubayi, Dhruv},
   title={Structure and stability of triangle-free set systems},
   journal={Trans. Amer. Math. Soc.},
   volume={359},
   date={2007},
   number={1},
   pages={275--291},
   issn={0002-9947},
   review={\MR{2247891}},
   doi={10.1090/S0002-9947-06-04009-8},
}

\bib{MP07}{article}{
   author={Mubayi, Dhruv},
   author={Pikhurko, Oleg},
   title={A new generalization of Mantel's theorem to $k$-graphs},
   journal={J. Combin. Theory Ser. B},
   volume={97},
   date={2007},
   number={4},
   pages={669--678},
   issn={0095-8956},
   review={\MR{2325805}},
   doi={10.1016/j.jctb.2006.11.003},
}

\bib{NY17}{article}{
   author={Norin, S.},
   author={Yepremyan, L.},
   title={Tur\'{a}n number of generalized triangles},
   journal={J. Combin. Theory Ser. A},
   volume={146},
   date={2017},
   pages={312--343},
   issn={0097-3165},
   review={\MR{3574234}},
   doi={10.1016/j.jcta.2016.09.003},
}

\bib{NY18}{article}{
   author={Norin, S.},
   author={Yepremyan, L.},
   title={Tur\'{a}n numbers of extensions},
   journal={J. Combin. Theory Ser. A},
   volume={155},
   date={2018},
   pages={476--492},
   issn={0097-3165},
   review={\MR{3741438}},
   doi={10.1016/j.jcta.2017.08.004},
}
		
\bib{PI08}{article}{
   author={Pikhurko, Oleg},
   title={An exact Tur\'{a}n result for the generalized triangle},
   journal={Combinatorica},
   volume={28},
   date={2008},
   number={2},
   pages={187--208},
   issn={0209-9683},
   review={\MR{2399018}},
   doi={10.1007/s00493-008-2187-2},
}

\bib{PI13}{article}{
   author={Pikhurko, Oleg},
   title={Exact computation of the hypergraph Tur\'{a}n function for expanded
   complete 2-graphs},
   journal={J. Combin. Theory Ser. B},
   volume={103},
   date={2013},
   number={2},
   pages={220--225},
   issn={0095-8956},
   review={\MR{3018066}},
   doi={10.1016/j.jctb.2012.09.005},
}
	
\bib{Sido87}{article}{
   author={Sidorenko, A. F.},
   title={On the maximal number of edges in a homogeneous hypergraph that
   does not contain prohibited subgraphs},
   language={Russian},
   journal={Mat. Zametki},
   volume={41},
   date={1987},
   number={3},
   pages={433--455, 459},
   issn={0025-567X},
   review={\MR{893373}},
}
	
\bib{SI68}{article}{
   author={Simonovits, M.},
   title={A method for solving extremal problems in graph theory, stability
   problems},
   conference={
      title={Theory of Graphs},
      address={Proc. Colloq., Tihany},
      date={1966},
   },
   book={
      publisher={Academic Press, New York},
   },
   date={1968},
   pages={279--319},
   review={\MR{0233735}},
}

\bib{TU41}{article}{
   author={Tur\'{a}n, Paul},
   title={Eine Extremalaufgabe aus der Graphentheorie},
   language={Hungarian, with German summary},
   journal={Mat. Fiz. Lapok},
   volume={48},
   date={1941},
   pages={436--452},
   issn={0302-7317},
   review={\MR{18405}},
}


\bib{Witt}{article}{
   author={Witt, Ernst},
   title={\"{U}ber Steinersche Systeme},
   language={German},
   journal={Abh. Math. Sem. Univ. Hamburg},
   volume={12},
   date={1937},
   number={1},
   pages={265--275},
   issn={0025-5858},
   review={\MR{3069690}},
   doi={10.1007/BF02948948},
}

\bib{Zy}{article}{
   author={Zykov, A. A.},
   title={On some properties of linear complexes},
   language={Russian},
   journal={Mat. Sbornik N.S.},
   volume={24(66)},
   date={1949},
   pages={163--188},
   review={\MR{0035428}},
}		
		
\end{biblist}
\end{bibdiv}
\end{document}